\documentclass[11pt]{amsart}
\usepackage[utf8]{inputenc}
\usepackage[T1]{fontenc}
\usepackage{a4wide}
\usepackage{mathpazo}
\usepackage{hyperref}
\usepackage{mathrsfs}
\usepackage{amssymb, amsmath}
\usepackage{enumitem}
\usepackage{xspace}
\usepackage[all]{xy}
\usepackage{bm}

\newcommand{\sC}{\mathscr C}

\newcommand{\NN}{\mathbf{N}}
\newcommand{\ZZ}{\mathbf{Z}}
\newcommand{\RR}{\mathbf{R}}

\newcommand{\sH}{\mathscr{H}}

\newcommand{\sN}{\mathscr{N}}
\newcommand{\sY}{\mathscr{Y}}

\newcommand{\sU}{\mathscr{U}}

\newcommand{\se}{\subseteq}

\newcommand{\fhi}{\varphi}
\newcommand{\lra}{\longrightarrow}
\newcommand{\inv}{^{-1}}

\newcommand{\da}[1]{{#1\!\downarrow}}
\newcommand{\ua}[1]{{#1\!\uparrow}}
\newcommand{\cha}[1]{\widehat{#1}}
\newcommand{\Ga}{G_{\mathrm{Arc}}}
\newcommand{\Gb}{G_{\Br}}
\newcommand{\Gc}{G_{\mathrm{Comp}}}
\newcommand{\Ge}{G_{\Ends}}
\newcommand{\Gr}{G_{\Reg}}

\DeclareMathOperator{\Aut}{Aut}
\DeclareMathOperator{\Homeo}{Homeo}

\DeclareMathOperator{\Ends}{Ends}
\DeclareMathOperator{\Free}{Free}
\DeclareMathOperator{\Sym}{Sym}
\DeclareMathOperator{\Alt}{Alt}
\DeclareMathOperator{\Reg}{Reg}
\DeclareMathOperator{\Br}{Br}

\DeclareMathOperator{\Fix}{Fix}
\DeclareMathOperator{\Stab}{Stab}

\theoremstyle{plain}
\newtheorem{thm}{Theorem}[section]
\newtheorem*{thm*}{Theorem}
\newtheorem{lem}[thm]{Lemma}
\newtheorem{prop}[thm]{Proposition}
\newtheorem{cor}[thm]{Corollary}
\theoremstyle{definition}
\newtheorem*{defn*}{Definition}
\newtheorem{defn}[thm]{Definition}
\newtheorem{example}[thm]{Example}
\newtheorem*{example*}{Example}
\newtheorem{rem}[thm]{Remark}

\newtheorem*{rem*}{Remark}
\begin{document}
\title{Structural properties of dendrite groups}
\author[B. Duchesne]{Bruno Duchesne}
\address{Institut Élie Cartan, Université de Lorraine, Nancy, France.}
\email{bruno.duchesne@univ-lorraine.fr}
\thanks{B.D. is supported in part by French projects ANR-14-CE25-0004 GAMME and ANR-16-CE40-0022-01 AGIRA}
\author[N. Monod]{Nicolas Monod}
\address{EPFL, 1015 Lausanne, Switzerland.}
\email{nicolas.monod@epfl.ch}
\begin{abstract}
Let $G$ be the homeomorphism group of a dendrite. We study the normal subgroups of $G$. For instance, there are uncountably many non-isomorphic such groups $G$ that are simple groups. Moreover, these groups can be chosen so that any isometric $G$-action on any metric space has a bounded orbit. In particular they have the fixed point property~(FH).
\end{abstract}
\maketitle
\setcounter{tocdepth}{1}
\renewcommand{\thesubsection}{{\thesection.\Alph{subsection}}}

\section{Introduction}
This article investigates the class of groups $G=\Homeo(X)$ that appear as homeomorphism group of a dendrite $X$, recalling that a \emph{dendrite} is a locally connected continuum without simple closed curves and that a continuum is a connected compact metrisable space.

Not much can be said unless we ask that the relationship between the group $G$ and the dendrite $X$ has some substance: for instance, there exists complicated dendrites with trivial homeomorphism group, and conversely the rather trivial ``star'' dendrite contains any countable group in its homeomorphism group.

\medskip
We shall therefore focus on \emph{dendro-minimal} dendrites, namely dendrites $X$ that do not admit a proper sub-dendrite $Y\se X$ invariant under $G=\Homeo(X)$.

Our main goal is to relate the properties of $G$, as an abstract group, with those of the topological space $X$. Inbetween these two worlds, we can also consider $G$ as a topological transformation group and hence as a Polish group.

\medskip
We start with some evidence that this whole entreprise has content:

\begin{thm}\label{thm:i:DS}
There is a family of $2^{\aleph_0}$ dendro-minimal dendrites such that the corresponding groups are pairwise non-isomorphic simple groups.
\end{thm}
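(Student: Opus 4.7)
The plan is to exhibit a concrete family of dendro-minimal dendrites: the \emph{generalized Ważewski dendrites} $W_S$ indexed by non-empty subsets $S \se \{3, 4, \ldots, \aleph_0\}$. For each such $S$, $W_S$ is characterised up to homeomorphism as the unique dendrite whose set of orders of branch points equals $S$ and in which, for every $n \in S$, the branch points of order $n$ are dense in $W_S$. Since the power set of a countable set has cardinality $2^{\aleph_0}$, this yields a family of the desired cardinality, and distinct $S \neq S'$ give non-homeomorphic $W_S$, $W_{S'}$ since the set of orders of branch points is a topological invariant. For dendro-minimality I would use the strong homogeneity of $W_S$, namely that $\Homeo(W_S)$ acts transitively on branch points of each fixed order $n\in S$, together with the density of such branch points: any $\Homeo(W_S)$-invariant sub-dendrite $Y \se W_S$ containing more than one point then meets a dense $\Homeo(W_S)$-orbit, hence equals $W_S$.

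The central step is simplicity of $G := \Homeo(W_S)$. My plan is the classical ``small support plus fragmentation plus commutator'' scheme. Let $N \lhd G$ be non-trivial and pick $g \in N \setminus \{1\}$; since $g$ moves some point, there is a small arc $I$ that $g$ displaces disjointly from itself. Using the abundance of branch points, $W_S$ can be decomposed into arbitrarily small pieces indexed by the components of $W_S \setminus F$ for finite sets $F$ of branch points. Conjugating $g$ by homeomorphisms that permute these components (and extend by the identity elsewhere, exploiting that each branch of a branch point is itself homeomorphic to $W_S$) produces non-trivial elements of $N$ supported in any prescribed component. A fragmentation lemma then realises an arbitrary $h \in G$ as a product of finitely many elements each supported in a single component of such a decomposition, and each of these can be written in turn as a product of commutators of conjugates of $g^{\pm 1}$, forcing $N = G$.

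The chief technical obstacle here is the fragmentation lemma: producing, for each $h \in G$, a decomposition with controlled supports requires careful use of the tree-like structure of $W_S$ at branch points and of the self-similarity $W_S$ enjoys inside each of its components. A secondary worry is ensuring that these arguments go through \emph{uniformly} in $S$, since the set of available branching orders varies — but because one only needs a single order $n\in S$ to perform the permutation-of-branches trick, the argument is robust.

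Finally, to upgrade non-homeomorphism of the dendrites to non-isomorphism of the abstract groups, I would apply a reconstruction theorem in the spirit of Rubin. Concretely, one verifies that the action of $G$ on $W_S$ is faithful and \emph{locally dense}: for every non-empty open $U \se W_S$ and every $x \in U$, the orbit of $x$ under $\{g \in G : \supp(g) \se U\}$ is dense in some non-empty open subset of $U$. This is a consequence of the transitivity statements used above, applied inside $U$. Rubin's theorem then guarantees that any abstract isomorphism $\Homeo(W_S) \to \Homeo(W_{S'})$ is implemented by a homeomorphism $W_S \to W_{S'}$; since these dendrites are non-homeomorphic for $S \neq S'$, the groups are pairwise non-isomorphic. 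The most delicate step of the whole argument is verifying local density, because one has to rule out that the supported orbit accumulates only on a nowhere dense set such as the set of endpoints or a single arc.
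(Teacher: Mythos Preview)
Your choice of family and the overall architecture (dendro-minimality from transitivity on dense sets of branch points; simplicity; reconstruction for non-isomorphism) matches the paper. The differences and one genuine gap are as follows.

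\medskip
\textbf{Simplicity.} The paper does \emph{not} proceed by Epstein--Higman fragmentation. Instead it first proves, for any dendro-minimal dendrite without free arcs, that the subgroup $\Gb$ generated by branch-point stabilisers is simple, using a Tits-style commutator trick: take $n\in N$ with an austro-boreal arc $I$, position $I$ appropriately via dendro-minimality, and write any $g$ fixing a component pointwise as $[h,n]$ with $h$ built from the $n$-translates of $g$ along $I$. It then shows, for $D_S$ specifically, that $G=\Gb$ because double transitivity on branch points implies every element is a product of two branch-point stabilisers.

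Your fragmentation step, as stated, has a real problem: elements supported in a single component of $D_S\setminus F$ fix $F$ pointwise and fix every other component pointwise, hence any finite product of such elements (for a \emph{fixed} $F$) preserves $F$ and each component setwise. So you cannot write an arbitrary $h\in G$ this way; you can only reach the subgroup the paper calls $\Gc$ (which equals $\Gb$). The missing step is exactly $G=\Gc$, and this is not a routine covering-type fragmentation: it hinges on the fixed-point property for dendrite homeomorphisms and on transitivity. The paper's two-line argument via double transitivity ($G=H\cup HgH$ for $H$ a branch-point stabiliser) is the cleanest way to close this. Once you have $G=\Gc$, the remaining ``commutators of conjugates'' part of your scheme is fine and is morally equivalent to the paper's Tits argument.

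\medskip
\textbf{Non-isomorphism.} Invoking Rubin is legitimate and the paper explicitly records that the pair $(D_S,\Homeo(D_S))$ satisfies Rubin's condition~(\textasteriskcentered); your worry about local density is misplaced, since self-similarity (each component closure is again $D_S$) makes this verification short. However, the paper's \emph{primary} proof is direct and more informative: it shows that the stabiliser of any branch point must fix a point in $D_{S'}$ (via a wreath-product fixed-point lemma), that this fixed point is a branch point of the same order (the order is read off from the finite-index subgroup lattice of the stabiliser), and then reconstructs the betweenness relation on branch points purely from inclusions among stabiliser intersections. This avoids the black box of Rubin and yields, as a by-product, that $\Homeo(D_S)$ has no outer automorphisms.
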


We prove Theorem~\ref{thm:i:DS} with the explicit family of \emph{generalised Wa\.zewski dendrites} $D_S$, where $S$ is an arbitrary non-empty subset of $\{3, 4, \ldots, \infty\}$. These classical dendrites are characterised by the fact that the order of every branch point lies in $S$ and that such branch points are arcwise dense in $D_S$ for every order in $S$.

For the non-isomorphy statement of Theorem~\ref{thm:i:DS}, we shall prove that $\Homeo(D_S)$, as a group, determines the set $S$. This can also be deduced from Rubin's deep Reconstruction Theorem~\cite{Rubin89} but we provide a direct proof (see Section~\ref{sec:non-i} for a discussion).

\medskip

By contrast, the simplicity statement of Theorem~\ref{thm:i:DS} results from a general analysis of the normal subgroups of $G=\Homeo(X)$ for all dendro-minimal dendrites $X$. A first distinction to make is whether $X$ contains a \emph{free arc}, namely an embedded copy $I\cong [0,1]$ of an interval whose interior is open in $X$. This produces a copy in $G$ of the orientation-preserving group $\Homeo_+([0,1])$ and it turns out that $G$ can be decomposed as a permutational wreath product over the set $\Free(X)$ of all maximal free arcs, see Section~\ref{sec:free} for exact definitions for the following:

\begin{prop}\label{prop:i:semi:wreath}
Let $X$ be a dendro-minimal dendrite. Suppose that $X$ admits some free arc (but is not reduced to an arc). Then
$$\Homeo(X)\ \cong\ \prod_{I\in \Free(X)} \Homeo_+(I) \ \rtimes \Aut(\Free(X)).$$
\end{prop}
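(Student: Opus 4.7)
The plan is to realise $\Homeo(X)$ as an extension of an action on the set of maximal free arcs by a product of interval homeomorphism groups.

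\textbf{Step 1.} Since ``maximal free arc'' is a topological property, every $g\in\Homeo(X)$ induces a permutation of $\Free(X)$. This yields a homomorphism $\pi\colon \Homeo(X)\to\Sym(\Free(X))$; its image preserves the natural combinatorial structure of $\Free(X)$ (incidence at shared endpoints), and is by definition $\Aut(\Free(X))$. Surjectivity will be witnessed by the splitting constructed below.

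\textbf{Step 2: the kernel.} Let $K=\ker\pi$. The union $\bigcup_{I\in\Free(X)}I$ is $\Homeo(X)$-invariant, hence dense in $X$ by dendro-minimality, so restriction gives an injection $K\hookrightarrow\prod_I\Homeo(I)$. I claim the image lies in $\prod_I\Homeo_+(I)$: suppose $g\in K$ exchanges the endpoints $a,b$ of some $I$. Since $X\ne I$, one endpoint (say $a$) is a branch point of $X$, admitting a non-$I$ branch $C$ (a component of $X\setminus\{a\}$ disjoint from $I\setminus\{a\}$). Pick $J\in\Free(X)$ meeting $C$. Since a free arc cannot traverse a branch point in its interior, $J\se\overline{C}=C\cup\{a\}$ by connectedness. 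The same reasoning applied to $b$ and $g(C)$ (a non-$I$ branch at $b$) gives $g(J)\se g(C)\cup\{b\}$; but $g(J)=J$, so $J\se(C\cup\{a\})\cap(g(C)\cup\{b\})=\emptyset$, because non-$I$ branches at distinct endpoints of $I$ are disjoint. Contradiction. Conversely, any tuple in $\prod_I\Homeo_+(I)$ yields a self-map of $X$ equal to the identity off $\bigcup_I I$; this is a homeomorphism because in a compact dendrite the diameters of pairwise disjoint sub-arcs tend to zero, guaranteeing continuity at non-free points. Hence $K\cong\prod_I\Homeo_+(I)$.

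\textbf{Step 3: splitting.} For $\sigma\in\Aut(\Free(X))$, I would define $g_\sigma$ by choosing, for each $I\in\Free(X)$, a coherent parameterisation $\phi_I\colon[0,1]\to I$ and setting $g_\sigma|_I=\phi_{\sigma(I)}\circ\phi_I\inv$, extended by continuity via the shrinking-arcs property. The delicate point is that the parameterisations $\{\phi_I\}$ must be chosen $\Aut(\Free(X))$-equivariantly so that $\sigma\mapsto g_\sigma$ is genuinely a homomorphism and not merely a set-theoretic section; this uses the intrinsic combinatorial distinction between the two endpoints of each arc (when they differ) together with a rigidification when both endpoints are combinatorially indistinguishable. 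This equivariant rigidification of arc parameterisations is the \emph{main obstacle}; the remaining ingredients are essentially forced by the dendrite structure.
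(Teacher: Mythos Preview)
Your overall architecture---identify $\ker\pi$ with $\prod_I\Homeo_+(I)$ and then split the extension---is exactly the paper's. Step~2 is essentially the paper's argument (Proposition~3.1 and Corollary~3.3), with one caveat: the density of $\bigcup_{I\in\Free(X)} I$ does \emph{not} follow from dendro-minimality by the one-line argument you give, because the closure of this union has no reason to be connected and hence need not be a sub-dendrite. The paper proves density separately (Proposition~3.2) via a null-sequence argument together with Corollary~2.5; you should not skip this.

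The real gap is Step~3, which you correctly flag as the obstacle but do not resolve. Your proposed route---choose the parameterisations $\phi_I$ ``$\Aut(\Free(X))$-equivariantly''---is not what the paper does, and it is hard to make sense of: $\Aut(\Free(X))$ acts on the \emph{set} $\Free(X)$, not on the points of the arcs, so there is no a~priori action under which the $\phi_I$ could be equivariant without already having a section. (Incidentally, once $g_\sigma$ is well-defined your formula $g_\sigma|_I=\phi_{\sigma(I)}\circ\phi_I^{-1}$ is automatically multiplicative in $\sigma$; the genuine issue is compatibility at shared endpoints, not the homomorphism law.) The paper sidesteps this entirely: fix \emph{arbitrary} $\alpha_I\colon[0,1]\to I$ and one involution $\tau$ of $[0,1]$; for $g\in\Homeo(X)$ set $\tau_I^g\in\{\mathrm{id},\tau\}$ according to whether $g\alpha_I$ and $\alpha_{gI}$ orient $gI$ the same way, and let $f^g\in K$ be the patchwork of $f_I^g=\alpha_I\,\tau_I^g\,\alpha_{gI}^{-1}\,g|_I\in\Homeo_+(I)$. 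The cocycle identity $\tau_I^{g_1g_2}=\tau_{g_2I}^{g_1}\tau_I^{g_2}$ makes $g\mapsto \bar g:=g(f^g)^{-1}$ a group homomorphism $\Homeo(X)\to\Homeo(X)$; it is trivial on $K=\Stab(\Free(X))$ and hence furnishes the section. The point is that no equivariance of the $\alpha_I$ is needed: the orientation cocycle absorbs the discrepancy, and since one starts from an actual homeomorphism $g$ there is no well-definedness issue to check.
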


For the analysis of the normal subgroups of $G=\Homeo(X)$ with $X$ general, consider the normal subgroup $\Ge \lhd G$ generated by all stabilisers of end points of $X$, and likewise $\Gb \lhd G$ the group generated by all stabilisers of branch points.

\begin{thm}\label{thm:i:simple}
Let $X$ be any dendro-minimal dendrite not reduced to an arc. Then $\Ge = G$.

Moreover, any subgroup normalised by $\Gb$ either contains $\Gb$ or fixes $\Ends(X)$ pointwise.
\end{thm}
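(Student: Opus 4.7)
The plan is to handle the two assertions separately. Both arguments rely on two basic ingredients: every self-homeomorphism of a dendrite has a fixed point, and for any non-end point $p\in X$ each component of $X\setminus\{p\}$ contains end points of $X$ in its interior (each closure $\overline{C_\alpha}$ is a non-trivial sub-dendrite, so has end points, and those distinct from $p$ are end points of $X$).

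For $\Ge=G$: given $g\in G$, pick a fixed point $p$ of $g$. If $p\in\Ends(X)$ then $g\in\Stab(p)\se\Ge$. Otherwise $X\setminus\{p\}$ has at least two components $(C_\alpha)_{\alpha\in I}$. When $g$ preserves each component setwise, partition $I=I_1\sqcup I_2$ into non-empty subsets and define $g_j$ ($j=1,2$) to coincide with $g$ on $\bigcup_{\alpha\in I_j}\overline{C_\alpha}$ and with the identity elsewhere. Both $g_j$ are homeomorphisms of $X$ (continuity at $p$ is automatic), $g=g_1g_2$, and each $g_j$ fixes any end point lying in a component of the opposite family, hence $g_j\in\Stab(e)\se\Ge$. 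When $g$ permutes components non-trivially, I would apply the same template after grouping the components into $g$-orbits, handling each orbit of size $\ge 2$ by composing with swap-type homeomorphisms that live in $\Ge$ because they fix end points in components outside the orbit.

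For the second assertion: let $N$ be normalised by $\Gb$ and pick $n\in N$ with $n(e_0)=e_1\neq e_0$ for some end point $e_0$. It suffices to show $\Stab(b)\se N$ for every branch point $b$. Given such a $b$ and $\sigma\in\Stab(b)$, I would choose a small sub-dendrite $Y\ni b$ containing the support of $\sigma$ with $Y\cap n(Y)=\emptyset$ (achievable by shrinking $Y$ along a branch at $b$ whose $n$-image is separated; the hypothesis $n(e_0)\neq e_0$ supplies such a branch, possibly after replacing $b$ by a branch point in its orbit). Let $\gamma\in\Stab(b)$ equal $\sigma^{-1}$ on $Y$ and the identity off $Y$; then $\gamma\in\Gb$ and the commutator $[n,\gamma]=(n\gamma n^{-1})\gamma^{-1}$ is supported on the disjoint union $Y\sqcup n(Y)$, equal to $\sigma$ on $Y$ and to $n\gamma n^{-1}$ on $n(Y)$. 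Since $\Gb$ normalises $N$, $[n,\gamma]\in N$; cancelling the $n(Y)$-part by an analogous commutator construction near the branch point $n(b)$ then leaves $\sigma\in N$.

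The main obstacle is this final cancellation: the $n(Y)$-piece of $[n,\gamma]$ is a specific conjugate $n\gamma n^{-1}$, and producing an element of $N$ that matches it on $n(Y)$ appears to require something like what we want to prove. A plausible way around this is a bootstrapping argument that first uses iterated commutators with $n$ to exhibit enough small-support elements of $N$ to perform arbitrary cancellations. A second delicate case is when $n$ fixes the chosen branch point $b$; then one first replaces $n$ by an auxiliary commutator $n'=[n,\gamma_0]\in N$ with $\gamma_0\in\Gb$ selected so that $n'(b)\neq b$, and uses $n'$ in place of $n$, relying on dendro-minimality to guarantee that $n'$ still moves some end point.
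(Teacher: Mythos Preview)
Your argument for $\Ge=G$ has a genuine gap in the case you glossed over: when the fixed point $p$ is a \emph{regular} point and $g$ swaps the two components $C_1,C_2$ of $X\setminus\{p\}$. In that situation $p$ is the \emph{unique} fixed point of $g$, and the two components form a single $g$-orbit with no ``outside'' components, so there is no end point that your swap-type homeomorphism can fix. Your template produces elements of $\Gb$ (or $\Gr$) but not of $\Ge$ directly, and you cannot yet use $\Gb\se\Ge$ or $\Gr\se\Ge$ since that is what is at stake. The paper isolates exactly this swap-at-a-regular-point case as the only difficult one and resolves it by importing an element $h$ admitting an \emph{austro-boreal} arc (whose existence needs dendro-minimality), positioning this arc relative to $r$, and showing that $hg$ cannot again have a regular fixed point with swapped components; hence $hg\in\Ge$ and thus $g\in\Ge$.

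For the second assertion you correctly identified the obstacle: after forming $[n,\gamma]$ you are left with an unwanted conjugate on $n(Y)$, and your proposed bootstrapping/cancellation is circular as stated. The paper avoids this entirely. First, it shows (via two applications of the ``normal subgroup of a dendro-minimal group is dendro-minimal or fixes all ends'' lemma) that $N$ itself acts dendro-minimally, and then invokes an external result to obtain an element $n\in N$ with an austro-boreal arc $I$. The Tits trick is then to express a given $g\in\Gc$ \emph{exactly} as a single commutator $g=[h,n]$ with $h\in\Ga=\Gb$: one positions $I$ so that $g$ is supported in a single fibre $X_b(I)$ over a branch point $b\in I$, and defines $h$ fibrewise along the forward $n$-orbit of $b$ by $h_{n^k b}=n^k g n^{-k}$ and the identity elsewhere. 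Because $n$ translates along $I$, the commutator telescopes to $g$ on the nose, with no residual piece to cancel. Your approach never invokes austro-boreal dynamics, and without it the commutator method does not close.
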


This will imply the following simplicity statements.

\begin{cor}\label{cor:i:simple}
Let $X$ be any dendro-minimal dendrite.

If $X$ has no free arc, then $\Gb$ is a simple group.

If $X$ admits some free arc (but is not reduced to it), then the image of $\Gb$ in $\Aut(\Free(X))$ is a simple group.
\end{cor}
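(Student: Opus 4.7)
The plan is to deduce both parts of the corollary from Theorem~\ref{thm:i:simple}, with Proposition~\ref{prop:i:semi:wreath} brought in for the second part. The common engine will be the following observation, which I would prove first: any $g\in\Homeo(X)$ fixing $\Ends(X)$ pointwise must also fix every branch point. The argument is that a branch point $b$ has three distinct branches $B_1,B_2,B_3$; each $B_i$ is a non-degenerate sub-dendrite having $b$ as one of its end points, so it contains some end point of $X$ in the open component $B_i\setminus\{b\}$ (a second end point of $B_i$ lies there and its order in $X$ agrees with its order in $B_i$). Picking $e_i\in\Ends(X)\cap B_i$, one checks that $b$ is the unique topological centre of $\{e_1,e_2,e_3\}$, and since arcs in a dendrite are determined by their endpoints, any homeomorphism fixing the $e_i$ fixes this centre.

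For Part~1, I would first recall that in a dendrite without free arcs the branch points are dense: otherwise a nonempty open set would contain a regular point of $X$, which has an arc-neighbourhood producing a free arc. Given a nontrivial $N\lhd\Gb$, Theorem~\ref{thm:i:simple} yields two alternatives. Either $N\supseteq\Gb$ and we are done; or $N$ fixes $\Ends(X)$ pointwise, in which case the observation combined with density of branch points forces every element of $N$ to be the identity, contradicting the choice of $N$. Thus $\Gb$ is simple.

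For Part~2, let $\pi\colon\Homeo(X)\to\Aut(\Free(X))$ denote the quotient from Proposition~\ref{prop:i:semi:wreath}, whose kernel $\prod_I\Homeo_+(I)$ fixes $\Ends(X)$ pointwise. Given a nontrivial $\bar N\lhd\pi(\Gb)$, I would set $M=\pi^{-1}(\bar N)\cap\Gb$, a normal subgroup of $\Gb$, and again apply Theorem~\ref{thm:i:simple}. If $M\supseteq\Gb$, then $\bar N\supseteq\pi(\Gb)$ and we are done. Otherwise $M$ fixes $\Ends(X)$ pointwise; then each $g\in M$ fixes every end point and every branch point, and since the two endpoints of any maximal free arc are either end points or branch points (a regular endpoint would allow the arc to be extended, contradicting maximality), $g$ fixes these endpoints and, by uniqueness of arcs in a dendrite, maps each maximal free arc $I$ to itself. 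This places $g\in\ker\pi$, forcing $\bar N=\pi(M)=\{1\}$, the desired contradiction.

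The main obstacle to watch is the pair of structural facts about dendrites invoked above: density of branch points in the free-arc-free case, and the classification of endpoints of maximal free arcs. Both are classical but deserve careful justification. Everything else is a routine combination of Theorem~\ref{thm:i:simple}, Proposition~\ref{prop:i:semi:wreath}, and the ``end points control branch points'' observation.
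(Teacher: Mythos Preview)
Your overall strategy matches the paper's: both parts are immediate from Theorem~\ref{thm:i:simple} once one knows that $\Fix(\Ends(X))$ lies in the kernel of $\pi$ (the paper packages this as Corollary~\ref{cor:fix:stab}). There is, however, a factual error in your Part~2. You claim that an extremity of a maximal free arc must be an end point or a branch point, because a regular extremity would allow extension. That is not true: a regular extremity $p$ can be a \emph{limit of branch points}, in which case every arc extending $I$ beyond $p$ immediately meets a branch point, so $I$ remains maximal. The paper notes exactly this trichotomy in the proof of Proposition~\ref{prop:fix:stab}. The repair is immediate: since your $g$ fixes every branch point, continuity forces $g$ to fix every limit of branch points as well, so all three kinds of extremity are fixed and $g\in\ker\pi$ follows as you wanted.

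A minor streamlining for Part~1: the absence of free arcs is equivalent to $\Ends(X)$ being dense, not merely $\Br(X)$; so once $N\subseteq\Fix(\Ends(X))$ you get $N=\{e\}$ directly, without passing through your branch-point observation at all.
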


The approach of the present article is in a sense opposite to our approach in~\cite{DM_dendrites}, where we studied obstructions for group actions on dendrites. There, we took the external viewpoint of representing some other group into $G$. Several of these results pointed to analogies with negative curvature, especially with results for isometric actions on trees. However, to stress how different the topological setting of dendrites can be, we proposed the following problem:

\itshape Find a Kazhdan group with a non-elementary action on a dendrite\upshape.

\noindent
Such a group would have a fixed point for any isometric action on a tree, or on a complete $\RR$-tree.

\medskip
Currently we do not have such an example if \emph{Kazhdan group} is understood in the context of abstract groups, which must then be finitely generated. We do however have many examples with \emph{property~(FH)}, i.e.\ the fixed-point property for isometric actions on Hilbert spaces. The latter property is precisely the consequence of Kazhdan's property which forbids actions on trees and on $\RR$-trees. It is moreover equivalent to Kazhdan's property for countable groups.

In fact the next result is much stronger than property~(FH) and shows that the dendrites $D_S$ are strongly allergic to the very idea of any metric structure.

\begin{thm}\label{thm:i:OB}
Let $S$ be a non-empty subset of $\{3, 4, \ldots, \infty\}$. Then any isometric action of $\Homeo(D_S)$ on any metric space has a bounded orbit.

In particular, the group $\Homeo(D_S)$ has property~(FH).
\end{thm}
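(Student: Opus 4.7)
The theorem asserts that $G = \Homeo(D_S)$ is \emph{strongly bounded} in the sense of Bergman--de Cornulier: every left-invariant pseudo-metric on $G$ is bounded. A classical reformulation reduces strong boundedness to the conjunction of two abstract-group conditions: the \emph{Bergman property} (for every generating set $U$ there exists $n$ with $G = (U \cup U^{-1} \cup \{1\})^n$) and \emph{uncountable cofinality} (i.e.\ $G$ is not the union of a strictly increasing countable chain of proper subgroups). The plan is to verify both of these, leveraging Theorem~\ref{thm:i:simple} together with the homogeneity of $D_S$.

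For the Bergman property, the central ingredient is a fragmentation lemma: every $g \in G$ is to be written as a product of a uniformly bounded number of conjugates of elements lying in a single end-point stabiliser. The structural tools are that $G$ acts transitively on branch points of each order $s \in S$, transitively on pairs (branch point, component of its complement), and that the closure of each such component is again a copy of $D_S$ whose pointwise stabiliser in $G$ is again isomorphic to $\Homeo(D_S)$. Combined with $\Ge = G$ from Theorem~\ref{thm:i:simple} and the simplicity statement of Corollary~\ref{cor:i:simple}, this recursive self-similarity should let one decompose a general $g$ into boundedly many pieces, each supported in a proper subdendrite and each conjugate into a prescribed end-point stabiliser. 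Then, given any generating set $U$, pick any non-trivial $u \in U$: its normal closure equals $G$ by simplicity, and the fragmentation shows this normal closure recurs in bounded Cayley length with respect to $U$.

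For uncountable cofinality, the approach follows Bergman's original strategy for $\Sym(\Omega)$. Suppose for contradiction that $G = \bigcup_n G_n$ with the $G_n$ proper and strictly increasing. Choose a nested sequence of subdendrites $Y_1 \supset Y_2 \supset \cdots$ whose intersection is a single end-point of $D_S$, together with homeomorphisms $g_n$ supported on $Y_n \setminus Y_{n+1}$ and chosen so that no $G_n$ contains enough of the tail $\prod_{k\ge n} g_k$. Because the supports shrink to a point, the infinite product $g := \prod_n g_n$ converges in the compact-open topology to an element of $\Homeo(D_S)$, yielding the desired contradiction.

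The main obstacle will be the fragmentation step: producing a \emph{uniformly bounded} number of factors for every $g \in G$. The expected technique is to combine commutator identities inside subdendrite stabilisers with the arcwise density of branch points of each order in $S$, so that the simplicity results of Corollary~\ref{cor:i:simple} can be invoked to absorb a priori unbounded products into a fixed conjugacy class. Once strong boundedness is established, property~(FH) is automatic: any affine isometric action on a Hilbert space with a bounded orbit admits a fixed point, namely the circumcentre of any orbit, which is well-defined by uniform convexity.
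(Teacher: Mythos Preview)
Your route differs substantially from the paper's, and the step you yourself flag as ``the main obstacle'' is genuinely unresolved.

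The paper does not verify the Bergman property and uncountable cofinality separately. Instead it fixes an end $z\in\Ends(D_S)$ and some $n\in S$, and uses the semi-linear order machinery of Section~5 to identify the stabiliser $\Homeo_z(D_S)$ with $\Aut(T,\leq)$, where $T=\Br_n(D_S)$ is ordered by $x\leq y \Leftrightarrow [z,x]\subseteq[z,y]$ (Corollary~\ref{cor:sl:isom}). The homogeneity result Proposition~\ref{prop:extension} shows this order is weakly two-transitive (Corollary~\ref{cor:DS:weak}), and a theorem of Droste--Truss then gives property~(OB) for $\Aut(T,\leq)$ outright, hence for $\Homeo_z(D_S)$ (Theorem~\ref{thm:OB:stab}). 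Finally, double transitivity of $G$ on $\Ends(D_S)$ yields $G=H\cup HgH$ with $H=\Homeo_z(D_S)$, and an elementary lemma (Lemma~\ref{lem:OB}) shows (OB) passes from $H$ to $G$. So the combinatorial core is entirely outsourced to Droste--Truss via the semi-linear order model; no fragmentation or infinite-product argument appears.

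Your from-scratch approach is a reasonable template, and the uncountable-cofinality half is plausible: $D_S$ is self-similar enough to support infinite products with shrinking support. But the Bergman half has a real gap. Even granting the fragmentation lemma (each $g$ a product of $N$ conjugates of elements of a fixed end-stabiliser $H$), your clause ``the fragmentation shows this normal closure recurs in bounded Cayley length with respect to $U$'' does not follow. Writing $g=\prod_{i=1}^{N} c_i h_i c_i^{-1}$ with $h_i\in H$ gives no control over the $U$-length of the $c_i$ or the $h_i$; simplicity says the normal closure of any $u\in U$ is $G$, but not that each element is a \emph{bounded} product of conjugates of $u$. To close this you would need either (OB) for $H$ itself---which is exactly what the paper extracts from Droste--Truss---or uniform simplicity (a single $N$ such that every $g$ is a product of $N$ conjugates of \emph{any} given non-trivial element), which is much stronger than anything you establish and would itself require a separate argument.
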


Another way to come close to an answer to the problem of Kazhdan groups is to consider $\Homeo(X)$ with its natural topology as a homeomorphism group. We can then use a result of Evans--Tsankov~\cite{Evans-Tsankov} and deduce that certain dendrite groups even have the \emph{strong} version of Kazhdan's property, namely admit a finite Kazhdan set.

\begin{thm}\label{thm:i:T}
Let $S$ be a finite non-empty subset of $\{3, 4, \ldots, \infty\}$.

Then the Polish group $\Homeo(D_S)$ has the strong Kazhdan property~(T).
\end{thm}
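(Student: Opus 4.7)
The plan is to exhibit $\Homeo(D_S)$ as a closed oligomorphic subgroup of some symmetric group $\Sym(B)$ on a countable set $B$, and then invoke the theorem of Evans--Tsankov from~\cite{Evans-Tsankov}, which states that any such Polish group admits a finite Kazhdan set, i.e.\ has strong property~(T).

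For $B$ I would take the set of branch points $B_S \subseteq D_S$. It is countable, $\Homeo(D_S)$-invariant, and arcwise dense in $D_S$; in particular, the restriction map $\rho \colon \Homeo(D_S) \to \Sym(B_S)$ is injective by continuity. The density of $B_S$ upgrades $\rho$ to a topological embedding for the pointwise-convergence topology on $\Sym(B_S)$ and, simultaneously, forces its image to be closed: any pointwise limit of homeomorphisms on the dense set $B_S$ extends uniquely and continuously to all of $D_S$, and the extension is again a homeomorphism since it admits an inverse constructed in the same way.

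The main obstacle is then to verify oligomorphicity, that is, that each orbit space $B_S^n / \Homeo(D_S)$ is finite. To a tuple $(b_1,\ldots,b_n) \in B_S^n$ one associates a finite combinatorial invariant, namely the convex hull of $\{b_1,\ldots,b_n\}$ in $D_S$, viewed as an abstract finite tree with each vertex labelled by its order (an element of the finite set $S$) and by the indices $i$ sitting at that vertex. The finiteness of $S$ guarantees that only finitely many such labelled trees arise, so it suffices to show that two tuples sharing the same labelled tree lie in the same $\Homeo(D_S)$-orbit. This is a Fra\"\i ss\'e-type homogeneity statement for $D_S$, which I would establish by a back-and-forth argument on finite sub-dendrites, exploiting the characterising property of $D_S$ that branch points of \emph{every} order $s\in S$ are arcwise dense. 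The hard part is making this back-and-forth step compatible with the labelling by orders, since a branch point of order $s$ must be matched to another of the same order while also respecting the already-fixed finite sub-configuration.

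With oligomorphicity established, the Evans--Tsankov theorem applied to the closed subgroup $\Homeo(D_S) \leq \Sym(B_S)$ directly yields the desired finite Kazhdan set. Note that the hypothesis that $S$ be finite is essential at exactly one place: if $S$ were infinite, then branch points of distinct orders could never be in the same orbit, and oligomorphicity would already fail at $n=1$, so this route to strong~(T) would break down.
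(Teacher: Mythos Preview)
Your overall route is the paper's: realise $\Homeo(D_S)$ as a closed subgroup of $\Sym(\Br(D_S))$ via Proposition~\ref{prop:topol}, establish oligomorphicity through the finite labelled-tree invariant (this is exactly Proposition~\ref{prop:extension} and Corollary~\ref{cor:DS:oligo}), and then appeal to Evans--Tsankov. The homogeneity step you flag as the ``hard part'' is handled in the paper by a direct decomposition of $D_S$ along the finite tree $\langle F\rangle$ rather than a back-and-forth, but either method works.

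There is, however, a genuine gap in your invocation of Evans--Tsankov. Their Theorem~1.1 does \emph{not} assert that every closed oligomorphic subgroup of $\Sym(B)$ has strong property~(T); an additional hypothesis is required, namely that the group has no proper open subgroup of finite index. The paper itself remarks parenthetically that strong~(T) ``is not the case for all oligomorphic groups, see~\cite[\S6]{Tsankov12}'', so this hypothesis cannot be dropped. Your proposal says nothing about it. The paper fills this gap in one line by invoking the abstract simplicity of $\Homeo(D_S)$ established in Corollary~\ref{cor:DS:simple}: a simple infinite group has no proper finite-index subgroups at all, hence in particular no open ones. You should add this verification; without it the argument is incomplete.
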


%

\bigskip
\subsection*{Location of the proofs}
The simplicity proof for Theorem~\ref{thm:i:DS} is completed in Corollary~\ref{cor:DS:simple} and the non-isomorphy is Corollary~\ref{cor:non-iso}. Proposition~\ref{prop:i:semi:wreath} is contained in Theorem~\ref{thm:semi:wreath}. Theorem~\ref{thm:i:simple} follows from Theorems~\ref{thm:fix:incl} and~\ref{thm:simple}. Corollary~\ref{cor:i:simple} is then deduced as Corollaries~\ref{cor:simple} and~\ref{cor:simple:free}. Theorem~\ref{thm:i:OB} occurs as Corollary~\ref{cor:OB} below, and Theorem~\ref{thm:i:T} as Corollary~\ref{cor:DS:T}.

\tableofcontents

\section{Preliminaries}
A general reference for dendrites is~\S10 in~\cite{Nadler}. We now recall some of the background but refer to~\cite{Nadler} and to~\cite{DM_dendrites} for the points not justified below.

\subsection{Dendrites}
In the case of a dendrite $X$, the Menger--Urysohn \emph{order} of a point $x\in X$ is simply the cardinality of the set of components of $X\setminus \{x\}$. The points of order one, two or~$\geq 3$ form respectively the sets $\Ends(X)$ of \emph{end points}, $\Reg(X)$ of \emph{regular points} and $\Br(X)$ of \emph{branch points}. For $3\leq n \leq \infty$, we write $\Br_n(X)$ for the set of points of order~$n$. The set $\Br(X)$ is always countable, $\Reg(X)$ is arcwise dense and uncountable, and $\Ends(X)$ is non-empty (assuming $X$ is not reduced to a point).

\medskip
Every non-empty closed closed connected subset $Y\se X$ is itself a dendrite; therefore we also refer to such sets as \emph{sub-dendrites}. There exists then a canonical continuous retraction $X\to Y$ called the \emph{first-point map}. For an arbitrary non-empty subset $Y\subset X$, there is a unique minimal closed connected sub-dendrite of $X$ containing $Y$; we denote it by $[Y]$. We write $[x,y]$ for the unique topological arc in $X$ connecting two points $x$ and $y$. A \emph{free arc} is an arc, not reduced to a point, whose interior is open in $X$. Since $X$ is a dendrite, it is equivalent to ask that the interior of the arc contains no branch point. Any free arc is contained in a maximal free arc; we denote by $\Free(X)$ the set of all maximal free arcs of $X$, which is a countable set. Two maximal free arcs meet at most at a common extremity, which is then a branch point. The following are equivalent, provided $X$ is not reduced to a point: $X$ has no free arc; $\Br(X)$ is dense in $X$; $\Ends(X)$ is dense in $X$.

\medskip
Any continuous self-map of a dendrite has a fixed point. A non-trivial arc $I=[x,y]$ in $X$ is \emph{austro-boreal} for a given homeomorphism $g\in\Homeo(X)$ if the intersection of the fixed-point set of $g$ in $X$ with $I$ is $\{x,y\}$. In that case, $g$ preserves $I$ and its action on $I\setminus \{x,y\}$ is conjugated to a translation action on~$\RR$.

\medskip
The following lemma can be deduced e.g.\ from~\cite[\S12]{Kuratowski-Whyburn}.

\begin{lem}\label{lem:cut:small}
Let $d$ be a metric inducing the topology of the dendrite $X$. For every $\epsilon>0$, there is a finite set $F$ such that the components of $X\setminus F$ have $d$-diameter less than~$\epsilon$.\qed
\end{lem}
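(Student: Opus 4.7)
The plan is to build $F$ by assembling finitely many \emph{local} cut-sets, using the fundamental separation property of dendrites together with two applications of compactness.

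\textbf{Step 1 (local cut-sets).} Fix $x\in X$. I would first produce a finite $F_x\se X$ such that the component $C_x$ of $X\setminus F_x$ containing $x$ lies in the open ball $B(x,\epsi/3)$. The key ingredient is the standard dendrite fact that any interior point $p$ of an arc $[x,y]$ separates $x$ from $y$ in $X$. For each $y$ in the compact set $K=X\setminus B(x,\epsi/3)$, pick such a separator $p_y$ in the interior of $[x,y]$ and let $V_y$ be the component of $X\setminus\{p_y\}$ containing $y$. Local connectedness of $X$ makes $V_y$ open; the $V_y$'s cover $K$, so by compactness finitely many suffice, say $V_{y_1},\dots,V_{y_m}$, and I would set $F_x=\{p_{y_1},\dots,p_{y_m}\}$.

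\textbf{Step 2 (local verification).} For any $z$ in the component $C_x$ of $X\setminus F_x$ containing $x$, no $p_{y_i}$ can lie on the arc $[x,z]$: otherwise $p_{y_i}$ would separate $x$ from $z$, contradicting the fact that $C_x$ is connected, contains both $x$ and $z$, and avoids $F_x$. Consequently $z$ lies in no $V_{y_i}$, hence $z\notin K$, i.e.\ $z\in B(x,\epsi/3)$.

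\textbf{Step 3 (global assembly).} The open sets $C_x$ cover $X$ as $x$ ranges over $X$; extract a finite subcover $C_{x_1},\dots,C_{x_n}$ and set $F=F_{x_1}\cup\cdots\cup F_{x_n}$. Given a component $C$ of $X\setminus F$, pick $i$ with $C\cap C_{x_i}\ne\emptyset$; since $C\cup C_{x_i}$ is then a connected subset of $X\setminus F_{x_i}$ meeting the component $C_{x_i}$, it lies entirely in $C_{x_i}$. Hence $\diam(C)\le\diam(C_{x_i})<\epsi$.

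The substantive step is Step~1: converting the qualitative cut-point property of dendrites into a genuinely finite separating set. Everything after that is a routine finite-subcover argument, arranged so that each component of the global cut $X\setminus F$ is automatically trapped inside one of the small local pieces $C_{x_i}$.
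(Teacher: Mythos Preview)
Your argument is correct. Each step checks out: in Step~2 the key implication ``$z\in V_{y_i}\Rightarrow p_{y_i}\in[x,z]$'' holds because in a dendrite a point separates two others exactly when it lies in the interior of the arc joining them, and in Step~3 the inclusion $C\subseteq X\setminus F\subseteq X\setminus F_{x_i}$ guarantees that $C\cup C_{x_i}$ is a connected subset of $X\setminus F_{x_i}$, forcing $C\subseteq C_{x_i}$.

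For comparison: the paper does not actually prove this lemma. It records it with a \texttt{\textbackslash qed} and refers the reader to Kuratowski--Whyburn~\cite[\S12]{Kuratowski-Whyburn}, noting in passing that it could alternatively be deduced from the null-sequence property of Lemma~\ref{lem:null}. Your proof is therefore genuinely different in spirit: it is self-contained and uses only the defining cut-point property of dendrites together with two straightforward compactness arguments, rather than invoking the classical structure theory. The paper's suggested alternative via Lemma~\ref{lem:null} would proceed by contradiction (an infinite family of large disjoint components would violate the null-sequence property), which is shorter once that lemma is in hand but less constructive. Your approach has the advantage of building $F$ explicitly and of making transparent why the components are small.
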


It could alternatively be deduced from the following special case of~\cite[V.2.6]{Whyburn_book}, which we shall also need below.

\begin{lem}\label{lem:null}
Any sequence of disjoint connected subsets of a dendrite is a \emph{null-sequence}, i.e.\ the diameter of its members tends to zero for any compatible metric.\qed
\end{lem}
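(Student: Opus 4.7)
The plan is to argue by contradiction. Suppose some compatible metric $d$ and some sequence $(C_n)$ of pairwise disjoint connected subsets of $X$ witness a failure of the null-sequence property. Then, after passing to a subsequence, I may assume $\diam(C_n) \geq \epsilon$ for some fixed $\epsilon>0$ and all $n$. I would pick $x_n, y_n \in C_n$ with $d(x_n, y_n) \geq \epsilon/2$, and use compactness of $X$ to extract convergent subsequences $x_n \to x$, $y_n \to y$, with $d(x,y) \geq \epsilon/2$; in particular $x \neq y$.

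The crux is to exploit the fact that a connected subset of a dendrite must contain every cut point that separates two of its members. Concretely, since $x \neq y$, I pick any interior point $p$ of the arc $[x,y]$. Such a $p$ has Menger--Urysohn order at least $2$, so $X \setminus \{p\}$ decomposes into open components (openness uses local connectedness of $X$), and $x, y$ lie in distinct components $U_x, U_y$: otherwise a path inside the component common to $x$ and $y$ would yield, by local connectedness, an arc from $x$ to $y$ in $X \setminus \{p\}$, contradicting uniqueness of $[x,y]$ in the dendrite. As $U_x$ and $U_y$ are open, the convergences $x_n \to x$, $y_n \to y$ give $x_n \in U_x$ and $y_n \in U_y$ for all sufficiently large $n$.

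It then follows that $p \in C_n$ for all large $n$: otherwise $C_n \subseteq X \setminus \{p\}$ would, by connectedness, lie inside a single component of $X \setminus \{p\}$, contradicting $x_n \in U_x$ and $y_n \in U_y$. But the family $(C_n)$ is pairwise disjoint, so $p$ cannot lie in infinitely many of its members, giving the required contradiction. I expect no serious obstacle in carrying this out; the only delicate point is the separation statement that $p$ cuts $x$ from $y$ in the whole of $X$ (not merely within the arc $[x,y]$), and this is automatic from the uniqueness of arcs in a dendrite combined with the cut-point characterisation of non-end points recalled in the preliminaries.
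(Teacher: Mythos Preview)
Your argument is correct. The paper does not actually prove this lemma: it records it as a special case of Whyburn's result \cite[V.2.6]{Whyburn_book} and ends with a \texttt{\textbackslash qed}. Your proof is a clean self-contained substitute for that citation.

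The route you take is the standard one and matches the spirit of Whyburn's proof: extract limit points $x\neq y$, choose an interior point $p\in[x,y]$, use local connectedness (automatic for Peano continua, hence for dendrites) to get open components of $X\setminus\{p\}$, and then force $p\in C_n$ for all large $n$ via connectedness of $C_n$. The only step worth flagging is the one you already flagged yourself, namely that $p$ separates $x$ from $y$ in all of $X$ and not merely inside $[x,y]$; your justification via arcwise connectedness of open connected subsets of a Peano continuum together with uniqueness of arcs in a dendrite is exactly right. One cosmetic remark: the mention of the Menger--Urysohn order of $p$ is not needed for the argument---all you use is that $X\setminus\{p\}$ has at least two components, which follows directly from the separation of $x$ and $y$.
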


\begin{rem}\label{rem:F:dense}
The set $F$ of Lemma~\ref{lem:cut:small} can moreover be taken to belong to any given arcwise dense subset of $X$. Indeed, if $F_0$ is as in the lemma but for some $\epsilon_0 < \epsilon/2$, then any set $F$ containing a point in $[x,y]$ for all distinct $x,y\in F_0$ will satisfy the condition for~$\epsilon$. On the other hand, one cannot restrict $F$ to lie in a subset that is merely dense, as evidenced by dendrites with a dense set of ends.
\end{rem}

An action of group $G$ on a dendrite is \emph{elementary} if there a fixed point or an invariant pair of points. An action is \emph{dendro-minimal} if there is no invariant proper sub-dendrite. See~\cite[\S3]{DM_dendrites} for characterizations of elementarity and~\cite[\S4]{DM_dendrites} for relations between dendro-minimality and the usual minimality for actions on compact spaces.

\subsection{Topologies on $\Homeo(X)$}
A natural topology on the homeomorphism group $\Homeo(X)$ of a dendrite $X$ is the topology of uniform convergence. With this topology, $\Homeo(X)$ is a Polish group (see e.g.\ \cite[\S 9.B.9]{Kechris_book}). When $X$ has no free arcs, a homeomorphism is completely determined by its action on $\Br(X)$ since $\Br(X)$ in dense is $X$. In particular, there is a faithful representation $\Homeo(X)\to\Sym(\Br(X))$, where $\Sym$ is used to denote the group of all permutations of a given set. Since the set of branch points $\Br(X)$ is countable, $\Sym(\Br(X))$ is a Polish group for the pointwise convergence. This gives a priori two topologies on $\Homeo(X)$; in fact they coincide.

\begin{prop}\label{prop:topol}
If $X$ is a dendrite without free arcs, the homomorphism $\Homeo(X)\to\Sym(\Br(X))$ is a homeomorphism onto its image.

In particular, $\Homeo(X)$ embeds as a closed subgroup of $\Sym(\Br(X))$.
\end{prop}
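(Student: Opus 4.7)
The plan is to prove that the two topologies on $\Homeo(X)$---uniform convergence, and the one pulled back from $\Sym(\Br(X))$---agree, and that the image is closed. Well-definedness of the map is straightforward since a homeomorphism preserves the Menger--Urysohn order, and injectivity is clear because $\Br(X)$ is dense in $X$ (as $X$ has no free arc), so homeomorphisms agreeing on $\Br(X)$ are equal.

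The main estimate, which gives continuity of the inverse on the image, is as follows: given $g \in \Homeo(X)$ and $\epsilon > 0$, I would produce a finite set $F \subset \Br(X)$ such that every $h \in \Homeo(X)$ with $h|_F = g|_F$ satisfies $\sup_{x \in X} d(h(x), g(x)) < \epsilon$. Using Lemma~\ref{lem:cut:small} together with Remark~\ref{rem:F:dense}, first pick a finite $F_0 \subset \Br(X)$ with every component of $X \setminus F_0$ of diameter less than $\epsilon/3$, then set $F = F_0 \cup g^{-1}(F_0)$, still a finite subset of $\Br(X)$ since $g$ permutes branch points. Both $X \setminus F$ and $X \setminus g(F)$ now have all components of diameter less than $\epsilon/3$. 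If $h|_F = g|_F$, then $h(F) = g(F)$, so $h$ induces a bijection between components of $X \setminus F$ and components of $X \setminus g(F)$. For any $x$ in a component $C$ of $X \setminus F$, pick $f \in F \cap \overline{C}$ (nonempty because $X$ is connected and $F \neq \emptyset$): then $g(f) = h(f)$ lies in both $\overline{g(C)}$ and $\overline{h(C)}$, and the diameter bound on these two closures yields $d(g(x), h(x)) < 2\epsilon/3$ by the triangle inequality.

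For continuity of the map itself, after reducing to $g = \mathrm{id}$ the task is to show that every $b \in \Br(X)$ is fixed by all $h$ sufficiently uniformly close to the identity. Let $\{C_\alpha\}$ be the components of $X \setminus \{b\}$: each has positive diameter, and by Lemma~\ref{lem:null} only finitely many have diameter above any given threshold. Since $b$ has order at least $3$, I pick three such components $C_1, C_2, C_3$ with $\diam(C_j) \geq \eta_0 > 0$ together with points $x_j \in C_j$ at distance at least $\eta_0/2$ from $b$, and set $\delta = \eta_0/8$. If some $h$ with $\sup_{x \in X} d(h(x), x) < \delta$ satisfied $h(b) = b' \neq b$, then $b'$ sits in a single $C_i$, and at least two indices $j_1, j_2 \in \{1,2,3\} \setminus \{i\}$ have $x_{j_1}, x_{j_2}$ lying in the unique large component $B$ of $X \setminus \{b'\}$ that contains $b$. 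The remaining components of $X \setminus \{b'\}$ all lie inside $C_i$, hence at distance $\geq \eta_0/2$ from $x_{j_1}$ and $x_{j_2}$, so $\delta$-closeness of $h$ to the identity forces $h(x_{j_1})$ and $h(x_{j_2})$ into $B$, yielding $h(C_{j_1}) = h(C_{j_2}) = B$ and contradicting injectivity on components.

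Closedness of the image in $\Sym(\Br(X))$ then follows from the main estimate and completeness of $\Homeo(X)$: a sequence that is Cauchy in $\Sym(\Br(X))$ becomes Cauchy in uniform convergence and so converges in $\Homeo(X)$. The main obstacle is the continuity step, because in dendrites like $D_S$ every order-set $\Br_n(X)$ is dense in $X$, so no purely topological reasoning prevents a small perturbation from swapping $b$ with a nearby branch point of the same order; what breaks this is the asymmetry between the large component of $X \setminus \{b'\}$ containing $b$ and the other, small components lying inside $C_i$.
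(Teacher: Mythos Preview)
Your overall strategy matches the paper's: prove both continuity directions and then closedness. The inverse-continuity argument is correct and close to the paper's (the paper instead enlarges $F$ by adding a point in each component so as to force $h$ to preserve each component, but your diameter-bound version works equally well). For closedness the paper simply quotes the general fact that a continuous injective homomorphism between Polish groups has closed image; your Cauchy-sequence argument is essentially the content of that fact.

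There is, however, a gap in your forward-continuity step. You assert that the components of $X\setminus\{b'\}$ other than $B$, being contained in $C_i$, are ``at distance $\geq \eta_0/2$ from $x_{j_1}$ and $x_{j_2}$''. This does not follow: the metric $d$ is only assumed compatible with the topology, not geodesic or convex, so points lying in different components of $X\setminus\{b\}$ can be arbitrarily close in $d$. Your choice $d(x_j,b)\geq \eta_0/2$ gives no lower bound on $d(x_{j_1},C_i)$.

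The fix is immediate and is exactly what the paper does: since each $C_j$ is \emph{open}, simply take $\delta$ small enough that $d(h,\mathrm{id})<\delta$ forces $h(x_j)\in C_j$ for $j=1,2,3$. If then $h(b)=b'\neq b$ lies in some component $C_\alpha$ of $X\setminus\{b\}$, at least two of the $x_j$ satisfy $h(x_j)\in C_j\subseteq B$, and your injectivity contradiction goes through unchanged. The paper phrases the conclusion slightly differently, observing that $b$ is the unique point of $[y_1,y_2]\cap[y_2,y_3]\cap[y_1,y_3]$ whenever $y_j\in C_j$, hence $h(b)=b$; but this is the same idea.
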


Since $\Homeo(X)$ is a closed subgroup of $\Sym(\Br(X))$, it implies in particular that it is a totally disconnected group; more precisely, its open subgroups form a neighbourhood system for the identity.

\begin{proof} 
We first show that the topology induced from $\Sym(\Br(X))$ is finer than the uniform topology. The pointwise stabiliser in $\Homeo(X)$ of a finite set $F\se \Br(X)$ as in Lemma~\ref{lem:cut:small} permutes the components of $X\setminus F$. Let $F'\supseteq F$ be a finite set containing also a point of each of these components. Then the pointwise stabiliser of $F'$ preserves each connected component of $X\setminus F$ and therefore is $\epsilon$-close to the identity in the uniform metric.

Conversely, let $(g_n)$ be a sequence in $\Homeo(X)$ converging to the identity in the uniform topology. For any $b\in\Br(X)$, we choose $x_1, x_2, x_3$ in distinct components of $X\setminus\{b\}$. Since the connected components of $X\setminus\{b\}$ are open, we know that $g_n(x_i)$ and $x_i$ belongs to the same component for $n$ large enough. Since $b$ is the only point in $[y_1, y_2]\cap [y_2, y_3]\cap [y_3, y_1]$ for any choice of elements $y_i$ in the corresponding components, $g_n(b)=b$ follows.

Finally, the image of $\Homeo(X)$ in $\Sym(\Br(X))$ is closed because it is Polish, see Exercise~9.6 in~\cite{Kechris_book}.
\end{proof}

\subsection{Separating subsets}
The following elementary disjunction is Lemma~4.3 in~\cite{DM_dendrites}.

\begin{lem}\label{lem:dis}
Let $G$ be a group with a dendro-minimal action on a dendrite $X$. For any proper sub-dendrite $Y\se X$ there is $g\in G$ such that $gY\cap Y=\varnothing$.\qed
\end{lem}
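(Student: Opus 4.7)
The plan is to argue by contradiction, combining a Helly-type intersection property for sub-dendrites with the dendro-minimality hypothesis. Assume that $gY\cap Y\neq\varnothing$ for every $g\in G$. Then for any $g_1,g_2\in G$ we also have $g_1Y\cap g_2Y\neq\varnothing$, since this is equivalent to $g_1^{-1}g_2Y\cap Y\neq\varnothing$. Thus the family $\{gY : g\in G\}$ consists of pairwise intersecting sub-dendrites, and the goal is to produce a common point, which will yield a proper $G$-invariant sub-dendrite.

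The crucial ingredient I would establish first is the Helly property for sub-dendrites: any finite collection of pairwise intersecting sub-dendrites of $X$ has a common point. The base case of three sub-dendrites $Y_1,Y_2,Y_3$ goes via the median construction. Pick $x_{ij}\in Y_i\cap Y_j$ for each pair. Since every sub-dendrite of $X$ is arcwise connected and contains the unique arc between any two of its points, we have $[x_{ij},x_{ik}]\subseteq Y_i$ for each $i$. The three arcs $[x_{12},x_{13}]$, $[x_{12},x_{23}]$, $[x_{13},x_{23}]$ have a unique common point $m\in X$ (the median of the three points in the dendrite), and by construction $m\in Y_i$ for each $i$. The case of $n\geq 4$ sub-dendrites then follows by an elementary induction: replacing $Y_{n-1}$ and $Y_n$ by $Y_{n-1}\cap Y_n$ (which is a sub-dendrite by the same arc argument) reduces to $n-1$ pairwise intersecting sub-dendrites, with the new pairwise intersections being produced by the three-fold case applied to suitable triples.

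With Helly available, every finite subfamily of the closed sets $\{gY : g\in G\}$ has nonempty intersection, so by compactness of $X$ the full intersection
\[
Z \;:=\; \bigcap_{g\in G} gY
\]
is nonempty. The set $Z$ is closed by definition, $G$-invariant by construction, and connected: given $x,y\in Z$, the arc $[x,y]$ lies in every sub-dendrite $gY$ (since each $gY$ is a sub-dendrite containing both points), hence $[x,y]\subseteq Z$. Thus $Z$ is a sub-dendrite. But $Z\subseteq Y\subsetneq X$, so $Z$ is a proper $G$-invariant sub-dendrite of $X$, contradicting dendro-minimality.

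The main obstacle is the Helly step, which is the only place the specific tree-like geometry of $X$ is used; the other steps (compactness, $G$-invariance, passage from arcwise to plain connectedness) are essentially formal. The median argument pins it down cleanly, so once that is recorded the rest of the proof is short.
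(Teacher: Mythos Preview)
Your proof is correct and follows essentially the same approach as the paper: the paper merely sketches that if $gY\cap Y$ were never empty, a basic Helly theorem forces $\bigcap_{g\in G} gY\neq\varnothing$, contradicting dendro-minimality. You have simply unpacked the Helly step (via the median argument and induction) and the verification that the intersection is a sub-dendrite, which the paper leaves implicit.
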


The proof given in~\cite{DM_dendrites} was as follows: if $gY\cap Y$ were never empty, then by a basic Helly theorem the intersection of $gY$ over all $g\in G$ would be non-empty, contradicting dendro-minimality.

\medskip

We shall use the following strengthening of Lemma~\ref{lem:dis}.

\begin{lem}\label{lem:dis2}
Let $G$ be a group with a dendro-minimal action on a dendrite $X$.

For any proper sub-dendrites $Y, Y'\se X$ there is $g\in G$ such that $gY \cap Y' = \varnothing$.
\end{lem}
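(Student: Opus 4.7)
The plan is to reduce everything to Lemma~\ref{lem:dis} applied to an appropriate proper sub-dendrite, treating separately the ways $Y$ and $Y'$ can sit inside $X$. Three cases arise, and the real content lies only in the third.

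First, if $Y \cap Y' = \varnothing$, then the identity works: $eY \cap Y' = Y \cap Y' = \varnothing$. Second, suppose $Y \cap Y' \neq \varnothing$ but $Y \cup Y' \neq X$. Then $Y \cup Y'$ is itself a proper sub-dendrite of $X$: it is closed as a union of two closed sets, and connected because the two connected sets $Y$ and $Y'$ meet. Applying Lemma~\ref{lem:dis} to the proper sub-dendrite $Y \cup Y'$ produces $g \in G$ with $g(Y \cup Y') \cap (Y \cup Y') = \varnothing$, which certainly entails $gY \cap Y' = \varnothing$.

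The main obstacle is the remaining case $Y \cap Y' \neq \varnothing$ and $Y \cup Y' = X$, where Lemma~\ref{lem:dis} cannot be applied to $Y \cup Y'$ itself. The trick is to first use Lemma~\ref{lem:dis} to trap a translate of $Y'$ inside $Y$: pick $h \in G$ with $hY' \cap Y' = \varnothing$. The connected set $hY'$ then lies in $X \setminus Y' = (Y \cup Y') \setminus Y' \subseteq Y$, hence $Y' \subseteq h^{-1}Y$. Now $h^{-1}Y$ is a proper sub-dendrite (the image of $Y$ under a homeomorphism) containing $Y'$, so Lemma~\ref{lem:dis} applied a second time yields $k \in G$ with $kh^{-1}Y \cap h^{-1}Y = \varnothing$. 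Taking $g = kh^{-1}$ gives
\[
gY \cap Y' \;\subseteq\; kh^{-1}Y \cap h^{-1}Y \;=\; \varnothing,
\]
as required. The heart of the argument is the simple observation that when $X = Y \cup Y'$ there is no room outside $Y'$ except inside $Y$, which forces the Lemma~\ref{lem:dis}-translate of $Y'$ to be swallowed by $Y$ and produces a proper sub-dendrite containing $Y'$ on which Lemma~\ref{lem:dis} can then be applied directly.
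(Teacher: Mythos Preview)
Your proof is correct and follows essentially the same approach as the paper: the same three-case split and, in the key third case, two applications of Lemma~\ref{lem:dis} combined via the observation that $X\setminus Y'\subseteq Y$. The only cosmetic difference is that the paper first pushes $Y$ inside $Y'$ (using $g_1 Y\cap Y=\varnothing\Rightarrow g_1 Y\subseteq Y'$) and then disjoins $Y'$ from itself, whereas you push $Y'$ inside a translate of $Y$ and then disjoin that translate from itself; these are symmetric variants of the same idea.
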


\begin{proof}
We can assume that $Y$ meets $Y'$ since otherwise $g=e$ will do. Therefore, $Y\cup Y'$ is a sub-dendrite of $X$. If this sub-dendrite is proper, then Lemma~\ref{lem:dis} applied to $Y\cup Y'$ provides $g$ with $g(Y\cup Y') \cap (Y\cup Y')=\varnothing$ and a fortiori $gY \cap Y' = \varnothing$. Therefore we can assume $Y \cup Y' = X$. Now Lemma~\ref{lem:dis} yields $g_1$ with $g_1Y\cap Y=\varnothing$ and hence $g_1Y \se Y'$. One last application of that lemma gives us $g_2$ with $g_2Y'\cap Y'=\varnothing$. We conclude that $g=g_2 g_1$ satisfies $g Y \se g_2 Y' \se X\setminus Y'$ as was to be shown.
\end{proof}

A minor variation of Lemma~\ref{lem:dis2} is as follows.

\begin{cor}\label{cor:dis3}
Let $G$ be a group with a dendro-minimal action on a dendrite $X$.

For any connected set $Z\se X$ that is not dense, any point $p\in X$ and any component $C$ of $X\setminus\{p\}$, there is $g\in G$ such that $g Z \se C$.
\end{cor}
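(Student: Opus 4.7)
The plan is to reduce Corollary~\ref{cor:dis3} to Lemma~\ref{lem:dis2} by packaging the data $(Z,p,C)$ as a pair of proper sub-dendrites. Set $Y = \overline{Z}$ and $Y' = X \setminus C$; then the conclusion $gZ \subseteq C$ is equivalent to $gY \cap Y' = \varnothing$ (using $gY = g\overline{Z} = \overline{gZ}$ and $Y'$ closed), so Lemma~\ref{lem:dis2} applies provided both $Y$ and $Y'$ are proper sub-dendrites.

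For $Y$: it is non-empty (as $Z$ is), closed by construction, connected as the closure of a connected set, and proper precisely because $Z$ is not dense. For $Y'$: I use that a dendrite is locally connected, so the components of $X\setminus\{p\}$ are open, whence $Y' = X\setminus C$ is closed; it contains $p$, so it is non-empty; it is proper because $C$, being a component, is non-empty. The point requiring a line of justification is connectedness: writing $Y'$ as the union of $\{p\}$ together with every component $C' \neq C$ of $X\setminus\{p\}$, and recalling the standard fact that for a dendrite the closure of any such component $C'$ is exactly $C'\cup\{p\}$, we realize $Y'$ as a union of the connected sets $\overline{C'}$ that all share the point $p$, hence $Y'$ is connected.

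With both $Y$ and $Y'$ now identified as proper sub-dendrites, Lemma~\ref{lem:dis2} delivers some $g \in G$ with $gY \cap Y' = \varnothing$, and therefore $gZ \subseteq gY \subseteq X\setminus Y' = C$, which is the desired conclusion. There is no real obstacle here beyond correctly handling the connectedness of $Y'$, which rests on the elementary dendrite fact recalled above; everything else is formal.
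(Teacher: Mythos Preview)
Your proof is correct and follows exactly the paper's approach: set $Y=\overline{Z}$ and $Y'=X\setminus C$, check that both are proper sub-dendrites, and apply Lemma~\ref{lem:dis2}. The paper's version is terser, simply asserting that $Y'$ is a proper sub-dendrite, while you spell out the connectedness of $Y'$ explicitly; one small quibble is that the word ``equivalent'' is slightly too strong (only the implication from $gY\cap Y'=\varnothing$ to $gZ\se C$ is needed, and the converse can fail if $p\in\overline{gZ}$), but this does not affect the argument.
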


\begin{proof}
The complement $Y'=X \setminus C$ is a proper sub-dendrite and we can therefore apply Lemma~\ref{lem:dis2} with $Y = \overline{Z}$.
\end{proof}

We record a more technical variant of Corollary~\ref{cor:dis3}.

\begin{lem}\label{lem:dis+}
Let $G$ be a group acting dendro-minimally on a dendrite $X$. Let $I\se X$ be an arc and $b$ be a branch point in the interior of $I$. Let $x\in X$ and let $Y$ be a component of $X\setminus\{x\}$. 

Then there is $g\in G$ with $g I\se Y$ such that the image of $x$ under the first-point map to $gI$ is $gb$.
\end{lem}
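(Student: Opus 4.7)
The plan is to reformulate both requirements---that $gI \se Y$ and that the image of $x$ under the first-point map to $gI$ equals $gb$---as a single disjointness statement, and then invoke Lemma~\ref{lem:dis2}.

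Since $b$ is a branch point of $X$ lying in the interior of $I$, the two halves of $I \setminus \{b\}$ occupy two distinct components of $X \setminus \{b\}$; as there are at least three such components, we may fix a ``third'' one: a component $W$ of $X \setminus \{b\}$ with $W \cap I = \varnothing$. The key observation is that if $g\inv x \in W$, then the arc $[g\inv x, b]$ lies inside $\overline{W} = W \cup \{b\}$---because in a dendrite the unique arc between two points of a connected closed subset is contained in that subset---and hence meets $I$ only at the point $b$. Translating by $g$ yields $[x, gb] \cap gI = \{gb\}$, which is precisely the first-point-map condition we want.

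Both $X \setminus Y$ and $X \setminus W$ are proper sub-dendrites of $X$: they are closed (since $Y$ and $W$ are components of open sets in the locally connected space $X$, hence open), non-empty and proper, and connected (each consists of the removed single point together with all the remaining components of the corresponding complement, each of which has the removed point on its boundary). Applying Lemma~\ref{lem:dis2} to these two produces $g_0 \in G$ with $g_0(X \setminus Y) \se W$. Set $g := g_0\inv$. Since $x \in X \setminus Y$, we get $g\inv x = g_0 x \in W$, which by the observation above forces the first-point map from $x$ to $gI$ to send $x$ to $gb$. Moreover, $g_0(X \setminus Y) \se W$ combined with $W \cap I = \varnothing$ yields $g_0(X \setminus Y) \cap I = \varnothing$, equivalently $gI \se Y$.

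The only step requiring any real insight is the geometric reformulation: recognizing that if $g\inv x$ lies in any third-direction component $W$ at $b$, then the first-point-map condition is automatic. From there, the statement reduces to a clean application of Lemma~\ref{lem:dis2} to the pair of proper sub-dendrites $X \setminus Y$ and $X \setminus W$.
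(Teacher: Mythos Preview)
Your proof is correct and follows essentially the same approach as the paper's: pick a ``third direction'' at $b$, and use Lemma~\ref{lem:dis2} (the paper invokes its immediate consequence, Corollary~\ref{cor:dis3}) to push $X\setminus Y$ into it. The only cosmetic difference is that the paper introduces an auxiliary regular point $p$ on an arc branching off $I$ at $b$ and works with a component of $X\setminus\{p\}$, whereas you work directly with a component $W$ of $X\setminus\{b\}$; your version is in fact slightly more streamlined.
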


\begin{proof}
Upon replacing $g$ by $g^{-1}$, the task is to find $g\in G$ such $I\se gY$ and $b$ is the retraction of $gx$ on $I$. Let $p$ be a regular point on an arc branching off from $I$ at $b$, let $C$ be the component of $X\setminus \{p\}$ not containing $b$ and denote by $Z$ the dendrite $\{x\} \cup (X\setminus Y)$. Then it suffices to have $gZ \se C$, which is possible by Corollary~\ref{cor:dis3}.
\end{proof}

\subsection{Patchwork}
The following patchwork for homeomorphisms fails already if we replace dendrites by a disc.
 
\begin{lem}\label{lem:patchwork}
Let $\sU$ be a family of disjoint open connected subsets of a dendrite $X$ and let $(f_U)_{U\in \sU}$ be a family of homeomorphisms $f_U \in\Homeo(U)$ for $U\in\sU$. Suppose that each $f_U$ can be extended continuously to the closure $\overline U$ by the identity on the boundary $\overline U\setminus U$.

Then the map $f\colon X\to X$ given by $f_U$ on each $U\in \sU$ and the identity everywhere else is a homeomorphism.
\end{lem}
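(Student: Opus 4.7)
The plan is to verify that $f$ is a bijection, show it is continuous, and then conclude by the standard compact-to-Hausdorff argument since $X$ is compact metrisable.

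Bijectivity is essentially bookkeeping: each $f_U$ is a permutation of $U$, the sets $U \in \sU$ are pairwise disjoint, and $f$ fixes the complement $X \setminus \bigcup_{U\in\sU} U$ pointwise. The inverse is the analogous patchwork built from $(f_U^{-1})_{U \in \sU}$. Continuity at any point $x$ which lies in some $U \in \sU$ is also immediate: since $U$ is open, a sequence $x_n \to x$ eventually lies in $U$, and there $f$ coincides with the continuous map $f_U$.

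The genuine content is continuity at a point $x \notin \bigcup_{U\in\sU} U$. Fix a compatible metric $d$ and a sequence $x_n \to x$; I want to show $f(x_n) \to x$. Split the sequence into a piece contained in $X \setminus \bigcup_{U\in\sU} U$, on which $f$ is the identity and there is nothing to check, and a piece for which each $x_n$ lies in some $U_n \in \sU$. For the second piece, I would distinguish two cases. If, after passing to a subsequence, all $U_n$ equal one fixed $U \in \sU$, then the limit $x$ lies in $\overline{U} \setminus U$, and the hypothesis that $f_U$ extends continuously to $\overline U$ by the identity on the boundary gives $f(x_n) = f_U(x_n) \to f_U(x) = x$. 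If instead the $U_n$ can be chosen pairwise distinct, then the pairwise disjoint connected sets $(U_n)$ form a null-sequence by Lemma~\ref{lem:null}, so $\diam(U_n) \to 0$; since both $x_n$ and $f(x_n) = f_{U_n}(x_n)$ lie in $U_n$, we obtain $d(f(x_n), x_n) \le \diam(U_n) \to 0$ and hence $f(x_n) \to x$.

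Once continuity is established, $f\colon X \to X$ is a continuous bijection of a compact Hausdorff space and therefore a homeomorphism. The one step I expect to require a bit of care is justifying the reduction to the two dichotomous subsequence regimes above; this is where the dendrite-specific ingredient Lemma~\ref{lem:null} does all the real work, replacing an otherwise missing equicontinuity hypothesis and explaining why the analogous patchwork statement fails on, say, a disc, where disjoint open connected sets need not form a null-sequence.
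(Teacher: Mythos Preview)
Your proof is correct and follows essentially the same approach as the paper's: both arguments reduce to sequential continuity and use Lemma~\ref{lem:null} on the pairwise disjoint connected sets $U_n$ as the key dendrite-specific ingredient. The only difference is cosmetic: the paper argues by contradiction (assume $f(x_n)\to y\neq f(x)$, deduce that each $U$ contains only finitely many $x_n$, extract distinct $U_n$, apply the null-sequence lemma), whereas you separate the cases $x\in U$ and $x\notin\bigcup U$ explicitly and spell out the ``all $U_n$ equal'' versus ``all $U_n$ distinct'' dichotomy; the substance is identical.
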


\begin{proof}
By compactness, it suffices to prove that there cannot be a sequence $x_n\to x$ in $X$ such that $f(x_n)$ converges to some $y\neq f(x)$. If there is such a sequence, then each $U\in\sU$ contains $x_n$ for at most finitely many indices $n$. Moreover, upon extracting we can assume that each $x_n$ belongs to some $U_n\in \sU$. By the previous observation, we can assume that all $U_n$ are distinct and hence they form a null-sequence by Lemma~\ref{lem:null}. Since $f(x_n) = f_{U_n}(x_n)$ belongs also to $U_n$, we conclude that $f(x_n)$ converges to $f(x)$, contrary to our assumption.
\end{proof}

\subsection{Uniform structure}
Like any compact space, a dendrite admits a unique compatible uniform structure.

\begin{lem}\label{lem:unif}
For any dendrite $X$, the sets
$$U_F = \{(x,y)\in X^2 : |[x,y]\cap F | \leq 1\}$$
form a basis of the uniform structure as $F$ ranges over all finite subsets of $X$. Moreover, one can restrict $F$ to belong to any arcwise dense subset of $X$.
\end{lem}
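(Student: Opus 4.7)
The plan is to exploit the fact that, being compact Hausdorff, $X$ carries a unique compatible uniformity, namely the one consisting of all neighbourhoods of the diagonal $\Delta \subseteq X \times X$. It therefore suffices to establish: (i) each $U_F$ is a neighbourhood of $\Delta$; (ii) every neighbourhood of $\Delta$ contains some $U_F$; (iii) the collection of $U_F$ is downward directed. Item (iii) is immediate since $U_{F_1 \cup F_2} \subseteq U_{F_1} \cap U_{F_2}$ and $F_1 \cup F_2$ remains finite and inside any prescribed arcwise dense subset whenever $F_1, F_2$ do; this also takes care of the ``moreover'' assertion.

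Part (ii) follows directly from the preliminaries. Fix a compatible metric $d$ on $X$ and a target $\epsilon$-entourage $V_\epsilon = \{(x,y) : d(x,y) < \epsilon\}$. Applying Lemma~\ref{lem:cut:small} with parameter $\epsilon/2$, combined with Remark~\ref{rem:F:dense} to place $F$ in the prescribed arcwise dense set, produces a finite $F$ such that every component of $X\setminus F$ has diameter less than $\epsilon/2$. For any $(x,y) \in U_F$ the arc $[x,y]$ meets $F$ in at most one point, so it decomposes into at most two connected pieces disjoint from $F$; each piece lies in a single component of $X \setminus F$ and therefore has diameter below $\epsilon/2$, which gives $d(x,y) < \epsilon$.

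The main obstacle is part (i). I would argue by contradiction, assuming that no metric $\epsilon$-entourage is contained in $U_F$. This yields sequences $x_n, y_n$ with $d(x_n, y_n) \to 0$ while each arc $[x_n, y_n]$ contains two distinct points of $F$. After extraction, a fixed pair $p \neq q$ in $F$ belongs to every $[x_n, y_n]$, whence $[p,q] \subseteq [x_n, y_n]$ and $\diam[x_n, y_n] \geq d(p,q) > 0$. On the other hand $x_n$ and $y_n$ converge to a common point $x \in X$, and the crux is the subclaim that in a dendrite this forces $[x_n, y_n] \to \{x\}$ in the Hausdorff topology, contradicting the diameter lower bound.

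To verify the subclaim, suppose on the contrary that some $z_n \in [x_n, y_n]$ converges to a point $z \neq x$. Pick an interior point $p'$ of the arc $[x,z]$; then $p'$ separates $x$ from $z$ in $X$. Let $C$ be the component of $X \setminus \{p'\}$ containing $x$, which is open by local connectedness. Then $x_n, y_n$ eventually lie in $C$ while $z_n$ eventually lies in a different component of $X \setminus \{p'\}$. But in a dendrite any two points of a single component of $X \setminus \{p'\}$ are joined by an arc contained in that component, so $[x_n, y_n] \subseteq C$ for large $n$, contradicting $z_n \in [x_n, y_n]$ with $z_n \notin C$.
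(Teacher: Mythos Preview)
Your argument is correct. Part~(ii) is exactly the paper's approach (you even sharpen the constant: the paper takes $F$ for $\epsilon$ and bounds $d(x,y)\le 3\epsilon$, while you take $\epsilon/2$ and get two pieces). Part~(iii) is harmless, though strictly speaking (i) and (ii) already suffice for a basis of the unique compact uniformity.

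The only genuine difference is in part~(i). The paper dispatches this in one line by invoking local arcwise connectedness: for each $x\in X$ choose an arcwise connected open neighbourhood $V$ meeting $F$ in at most one point (possible since $F$ is finite and $X$ is Hausdorff); uniqueness of arcs in a dendrite then forces $[y,z]\subseteq V$ for all $y,z\in V$, whence $V\times V\subseteq U_F$. Your sequential contradiction argument reaches the same conclusion and is entirely valid---the key step, that two points in the same component $C$ of $X\setminus\{p'\}$ have their connecting arc inside $C$, is exactly the uniqueness-of-arcs fact the paper uses implicitly. So the two routes rest on the same underlying property of dendrites; the paper's is just more direct.
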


\begin{proof}
These sets belong to the uniformity associated to the compact topology, i.e.\ they are neighbourhoods of the diagonal in $X\times X$: this follows from the fact that $X$ is locally arcwise connected (like any Peano continuum~\cite[8.25]{Nadler}). In oder to prove that they form a basis, choose a compatible metric $d$ on $X$ and $\epsilon>0$. Let $F$ be a finite set as provided by Lemma~\ref{lem:cut:small}; we can assume that $F$ is in a given arcwise dense subset by Remark~\ref{rem:F:dense}. If now $(x,y)\in U_F$, then $[x,y]\setminus F$ has at most three components and thus $d(x,y)\leq 3\epsilon$.
\end{proof}

\subsection{Permutational wreath products}
We begin with a result about cartesian products.

\begin{lem}\label{lem:fixprodgen}
Suppose that the direct product $P=\prod_{N\in\sN} N$ of a family $\sN$ of groups acts on a dendrite. If each $N\in\sN$ has fixed points, then so does $P$.
\end{lem}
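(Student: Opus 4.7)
Write $F_N = \Fix(N) \se X$ for each $N \in \sN$. The goal is to show that $\Fix(P) = \bigcap_{N\in\sN} F_N$ is non-empty. Each $F_N$ is closed and non-empty by hypothesis, and is $P$-invariant because distinct factors of a direct product commute, so each factor of $P$ permutes the fixed set of every other. The plan is to extract, via Zorn's lemma, a minimal closed $P$-invariant sub-dendrite $Y_0 \se X$ meeting every $F_N$, and then to locate a common fixed point among its end points.

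I would apply Zorn to the family $\mathscr{F}$ of closed $P$-invariant sub-dendrites $Y \se X$ such that $F_N \cap Y \neq \varnothing$ for every $N \in \sN$. It contains $X$. For a decreasing chain $(Y_\alpha)$ in $\mathscr{F}$, the intersection $Y = \bigcap_\alpha Y_\alpha$ is a sub-dendrite (a decreasing intersection of compact connected subsets of $X$ is compact, connected and non-empty), and $F_N \cap Y \neq \varnothing$ for each $N$ by the finite intersection property in the compact space $X$. Zorn produces a minimal $Y_0 \in \mathscr{F}$.

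For each $N$, the hull $[F_N \cap Y_0] \se Y_0$ is a $P$-invariant sub-dendrite, since $F_N \cap Y_0$ is $P$-invariant. The first-point map $r \colon Y_0 \to [F_N \cap Y_0]$ is $P$-equivariant, so for any $M \in \sN$ and any $y \in F_M \cap Y_0$, the image $r(y)$ belongs to $F_M \cap [F_N \cap Y_0]$. This shows $[F_N \cap Y_0] \in \mathscr{F}$, and minimality forces $[F_N \cap Y_0] = Y_0$. Thus $F_N \cap Y_0$ is a closed subset of $Y_0$ that spans $Y_0$ as a sub-dendrite.

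The remaining observation, which I expect to be the main obstacle, is the structural fact that the end points of a sub-dendrite spanned by a closed subset must themselves lie in that subset. Were there an end point $p \in \Ends(Y_0) \setminus F_N$, one could shave off a small open connected neighbourhood of $p$ in $Y_0$ disjoint from the closed set $F_N \cap Y_0$ and obtain a strictly smaller closed sub-dendrite still containing $F_N \cap Y_0$, contradicting $Y_0 = [F_N \cap Y_0]$. Applying this for every $N$ yields $\Ends(Y_0) \se \bigcap_N F_N = \Fix(P)$. Since a dendrite is either a single point or admits end points, $\Fix(P)$ is non-empty in either case.
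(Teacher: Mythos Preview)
Your argument is correct and follows essentially the same route as the paper's proof: pass to a minimal $P$-invariant sub-dendrite, use the first-point map to see that each $\Fix(N)$ spans it, and conclude that its end points lie in every $\Fix(N)$. The only difference is cosmetic: the paper applies Zorn to the family of \emph{all} $P$-invariant sub-dendrites (then uses the first-point map $X\to Y$ once to see $F_N\cap Y\neq\varnothing$), which spares the extra verification that $[F_N\cap Y_0]$ meets every $F_M$; and the paper cites \cite[Lemma~2.3]{DM_dendrites} for the end-point fact that you prove directly.
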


\begin{proof}
By a compactness argument, we can choose a minimal $P$-invariant sub-dendrite $Y\se X$. Each $N$ has fixed points in $Y$ since the first-point map $X\to Y$ is $P$-equivariant by its naturality. Since $P$ normalises $N$ it follows that $[Y\cap\Fix(N)]$ is a $P$-invariant sub-dendrite and hence $[Y\cap\Fix(N)]=Y$. It follow that $\Ends(Y)$ is contained in $\Fix(N)$ by~\cite[Lemma 2.3]{DM_dendrites}. Since this holds for each $N\in\sN$, we conclude that $P$ fixes $\Ends(Y)$.
\end{proof}

\begin{prop}\label{prop:perm}
Let $N$ be a group without index two subgroup and let $\sC$ be a set of cardinality $3\leq |\sC|\leq \aleph_0$. Then every action of the full permutational wreath product
$$H=N^\sC \rtimes \Sym(\sC)$$
on a dendrite has a fixed point.
\end{prop}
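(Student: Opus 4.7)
The plan is to apply Lemma~\ref{lem:fixprodgen} to the normal subgroup $P=N^\sC\lhd H$ and then to use the $\Sym(\sC)$-symmetry to produce a fixed point of $H$ from the resulting $P$-fixed-point set. By Zorn's lemma on the poset of non-empty closed $H$-invariant sub-dendrites of $X$ (chains have non-empty intersection by compactness), choose an $H$-minimal sub-dendrite $Y\se X$. It suffices to show $|Y|=1$, which I attempt by contradiction, assuming $|Y|>1$.

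The engine is the following \emph{elementarity principle}: since $N$ has no subgroup of index two, every elementary $N$-action on a dendrite has a fixed point, because an invariant pair $\{a,b\}$ is either pointwise fixed (done) or swapped, the swap yielding a surjection $N\to\ZZ/2$ that is forbidden by hypothesis. The same principle applies to $\Sym(\sC)$ when $\sC$ is countably infinite, since its only proper non-trivial normal subgroups are then the finitary alternating and symmetric groups, so $\Sym(\sC)$ admits no index-two quotient either. Using this, I show each factor $N_c\le H$ has a fixed point in $Y$: by $\Sym(\sC)$-equivariance (which conjugates $N_c$ transitively to the other $N_{c'}$'s), it suffices to handle one index $c$, and by the elementarity principle it is then enough to show the $N_c$-action on $Y$ is elementary. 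Here the hypothesis $|\sC|\ge 3$ enters: choosing $c',c''\in\sC\setminus\{c\}$, the transposition $\tau=(c'\,c'')\in\Sym(\sC)$ centralises $N_c$ in $H$, and together with the commuting factors $\{N_{c'}:c'\ne c\}\subset H$ it provides enough centralising symmetry around $N_c$ to preclude non-elementary behaviour of any minimal $N_c$-invariant closed subset of $Y$.

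Granted that each $N_c$ fixes a point of $Y$, Lemma~\ref{lem:fixprodgen} applied to $P=\prod_c N_c$ yields $\Fix(P)\cap Y\ne\varnothing$; the sub-dendrite $[\Fix(P)\cap Y]$ is $H$-invariant (as $\Sym(\sC)$ normalises $P$ and thus preserves $\Fix(P)$), hence equals $Y$ by $H$-minimality, and the argument of that lemma then forces $\Ends(Y)\se\Fix(P)$. It remains to locate a $\Sym(\sC)$-fixed point inside $\Fix(P)\cap Y$: if $\Sym(\sC)$ fixes an end of $Y$, that point is already $H$-fixed; otherwise, any $\Sym(\sC)$-orbit in $\Ends(Y)$ spans a finite sub-tree $T\se Y$ whose combinatorial vertices are either ends (in $\Fix(P)$) or branch points of $T$ pinned down by the end-partitions (and hence also $P$-fixed), so the centre of $T$ is either an $H$-fixed vertex or reduces us to the case $Y=\text{arc}$, where the hypothesis on $N$ (together with $|\sC|\ge 3$, keeping $\mathrm{Alt}(\sC)$ in play) forces a fixed interior point. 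For $\sC$ countably infinite, the second elementarity principle applies to $\Sym(\sC)$ by a parallel argument. The resulting $H$-fixed point in $Y$ contradicts $|Y|>1$ together with the $H$-minimality of $Y$. The main obstacle is establishing the elementarity of the $N_c$-action in the second paragraph; the rest is routine bookkeeping with minimal invariant sub-dendrites and the product lemma.
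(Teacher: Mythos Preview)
Your outline shares the paper's first move: show each copy $N_c$ has a fixed point, apply Lemma~\ref{lem:fixprodgen} to get $\Fix(P)\neq\varnothing$ and hence $\Ends(Y)\se\Fix(P)$, then deal with $\Sym(\sC)$. But there are genuine gaps at both ends, not just the one you flag.

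\textbf{Elementarity of $N_c$.} You acknowledge this as the main obstacle, and indeed your sentence about ``enough centralising symmetry \ldots\ to preclude non-elementary behaviour'' is not an argument. The paper does not argue this way: it invokes a specific external result (Corollary~4.6 of~\cite{DM_dendrites}) stating that when a product of two groups acts on a dendrite, at least one factor acts elementarily. Applied to $N^\sC = N_c \times \prod_{c'\neq c} N_{c'}$ (and iterated), this gives some factor $N_{c_0}$ acting elementarily; conjugating by $\Sym(\sC)$ then shows every $N_c$ does. Your centraliser idea does not obviously yield this.

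\textbf{The endgame is not routine.} Your claim that ``the rest is routine bookkeeping'' is where the proposal really breaks down. In the finite case, once you are reduced to $Y$ an arc with $P$ fixing both endpoints, you still need to produce a common fixed point for $P$ and $\Sym(\sC)$ in the interior; $P$ need not fix the arc pointwise. The paper's argument here is not bookkeeping: it uses that $\Alt(n)$, being finite inside the torsion-free group $\Homeo_+(I)$, acts trivially on the arc, then takes an orientation-reversing $\sigma$ with fixed point $p$ and runs a conjugation trick (using transitivity of $\Alt(n)$ on $\sC$, which needs $|\sC|\geq 3$) to force every $N_c$ to fix $p$. Your sentence ``forces a fixed interior point'' hides exactly this.

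\textbf{The infinite case.} Here your ``parallel argument'' is not a proof at all. Knowing that $\Sym(\sC)$ has no index-two subgroup only helps once you know its action is elementary, and you give no reason for that. The paper's route is substantially different and uses real input: first, amenability of $\Sym_{\mathrm f}(\sC)$ combined with a lemma from~\cite{DM_dendrites} forces $P\rtimes\Sym_{\mathrm f}(\sC)$ to act elementarily; then $\Alt_{\mathrm f}(\sC)$ acts trivially on the resulting invariant arc, so $P\rtimes\Alt_{\mathrm f}(\sC)$ has a non-empty fixed set $K$; finally, the action of $\Sym(\sC)$ on the compact metrisable set $K$ is automatically continuous by the Kechris--Rosendal theorem, and since $\Alt_{\mathrm f}(\sC)$ is dense in the Polish topology on $\Sym(\sC)$, the action on $K$ is trivial. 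None of this is visible in your sketch.
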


\begin{proof}
Let $X$ be a dendrite with an $H$-action. Since $N^\sC$ is a product, one of its factors must act elementarily on $X$ by Corollary~4.6 in~\cite{DM_dendrites} (that reference considers products of two factors, but the general case follows at once). Since $N$ has no index two subgroup, elementarity means that this factor fixes a point in $X$. Since all factors of $N^\sC$ are conjugated in $H$, they all fix some point. Therefore, $N^\sC$ has a fixed point by Lemma~\ref{lem:fixprodgen}.

We consider first the case where $\sC$ has finite size $n\geq 3$. Then $H$ has a finite orbit and hence acts elementarily. If $H$ does not fix a point, it preserves some arc $I\se X$ and admits an index two subgroup $H'$ preserving the orientation of $I$. We claim that $H'$ is $N^\sC \rtimes \Alt(n)$. Since $N$ has no index two subgroup, it suffices to check that $\Alt(n)$ is the only index two subgroup of $\Sym(n)$, or equivalently that it has no index two subgroup itself. Indeed, for $n\neq 4$ it is simple (of order~$\neq 2$) and for $n=4$ its only proper normal subgroup has index three. This proves the claim.

Since the group of orientation-preserving homeomorphisms of an interval contains no non-trivial finite subgroup, the image of $\Alt(n)$ in $\Homeo_+(I)$ is trivial. There is an element $\sigma\in\Sym(n)$ reversing the orientation of $I$; note that $\sigma$ has a necessarily unique fixed point $p$ in the interior of $I$. It suffices to prove that every factor $N_1\cong N$ of $N^\sC$ fixes $p$. If not, then for some $g\in N_1$ we have $g p > p$ with respect to a choice of an orientation of $I$. Then $\sigma g \sigma\inv p=\sigma g p < p$. On the other hand, $\Alt(n)$ acts transitively on $\sC$ since $n\geq 3$. Therefore there is $\tau\in \Alt(n)$ with $\tau g \tau\inv =\sigma g \sigma\inv$. It follows $\tau g \tau\inv p< p$, which contradicts $g p > p$ since $\tau$ acts trivially on $I$.

We now consider the case where $\sC$ is infinite countable. Denote by $\Sym_{\mathrm f}(\sC)$ the normal subgroup of finitely supported permutations of $\sC$ and by $\Alt_{\mathrm f}(\sC)$ its alternating subgroup. Since $\Sym_{\mathrm f}(\sC)$ is an amenable group and since $N^\sC$ fixes a point, Lemma~6.1 in~\cite{DM_dendrites} implies that $N^\sC\rtimes \Sym_{\mathrm f}(\sC)$ acts elementarily. Let thus $I\se X$ be an invariant arc; note that $N^\sC$ fixes a point of $I$, namely the image of any fixed point under the first-point map $X\to I$. As above, $\Alt_{\mathrm f}(\sC)$ acts trivially on $I$, being an increasing union of $\Alt(n)$. Therefore, $N^\sC\rtimes \Alt_{\mathrm f}(\sC)$ fixes a point. The non-empty set $K\se X$ of points fixed by $N^\sC\rtimes \Alt_{\mathrm f}(\sC)$ is closed, and therefore it is a compact metrisable space. It is $H$-invariant and the $H$-action on $K$ descends to an action of $\Sym(\sC)/\Alt_{\mathrm f}(\sC)$. If we endow $\Sym(\sC)$ with its Polish topology, the subgroup $\Alt_{\mathrm f}(\sC)$ is dense. However, Kechris--Rosendal have proved that any action of $\Sym(\sC)$ on a compact metrisable space is continuous for this topology, see Theorem~1.10 in~\cite{KEchris-Rosendal}. Therefore, the $\Sym(\sC)$ action on $K$ is trivial and $H$ fixes every point of $K$.
\end{proof}

We record the following for later use.

\begin{lem}\label{lem:no:ind2}
Let $N$ be any group and let $\sC$ be an infinite countable set. Then the group
$$H=N^\sC \rtimes \Sym(\sC)$$
has no index two subgroup.
\end{lem}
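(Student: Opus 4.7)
The plan is to show any homomorphism $\phi\colon H\to \ZZ/2$ is trivial. First I observe that $\phi|_{\Sym(\sC)}=0$: indeed, $\Sym(\sC)$ for $\sC$ countably infinite has no subgroup of index two, because the classical Schreier--Ulam theorem classifies its normal subgroups as $\{1\}\subset\Alt_{\mathrm f}(\sC)\subset\Sym_{\mathrm f}(\sC)\subset\Sym(\sC)$, and the proper ones have infinite index. Consequently $\phi$ restricted to $N^\sC$ is invariant under conjugation by $\Sym(\sC)$.

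The core of the proof is to show that a $\Sym(\sC)$-invariant homomorphism $\psi\colon N^\sC\to\ZZ/2$ must vanish. The first step is to show that for any fixed-point-free involution $\sigma\in\Sym(\sC)$, every $\sigma$-invariant element $k\in N^\sC$ satisfies $\psi(k)=0$. Picking a transversal $A$ for the pairs $\{c,\sigma(c)\}$ and defining $h$ to be $k$ on $A$ and trivial elsewhere, one checks coordinate-wise that $k=h\cdot(\sigma h\sigma^{-1})$; invariance of $\psi$ then gives $\psi(k)=2\psi(h)=0$ in $\ZZ/2$.

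The second step is to factor an arbitrary $g\in N^\sC$ as $g=k_1 k_2$ where each $k_i$ is invariant under one of two interlocking involutions. After identifying $\sC$ with $\ZZ$, take $\sigma_1$ swapping $2k\leftrightarrow 2k+1$ and $\sigma_2$ swapping $2k-1\leftrightarrow 2k$. Writing $a_k=k_1(2k)=k_1(2k+1)$ and $b_k=k_2(2k-1)=k_2(2k)$, the desired equalities $g(2k)=a_k b_k$ and $g(2k+1)=a_k b_{k+1}$ reduce to the recurrence $b_{k+1}=b_k\cdot g(2k)^{-1}g(2k+1)$, solvable in both directions from any initial $b_0\in N$; this then determines $a_k=g(2k)b_k^{-1}$, with no commutativity of $N$ required. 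Combining the two steps yields $\psi(g)=\psi(k_1)+\psi(k_2)=0$.

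I expect the decomposition in the second step to be the main obstacle: using a single involution only shows that $\sigma$-invariant elements lie in $\ker\psi$, a severely restricted class, whereas the interlocking pairings of $\sigma_1$ and $\sigma_2$ allow any element of $N^\sC$ to be expressed as a product of two such invariants via a simple telescoping recurrence.
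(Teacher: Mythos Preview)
Your proof is correct. Both your argument and the paper's reduce, via Schreier--Ulam, to showing that a $\Sym(\sC)$-invariant homomorphism $\psi\colon N^\sC\to\ZZ/2$ must vanish, and both do this by identifying $\sC$ with $\ZZ$ and solving a recurrence. The difference lies in the decomposition used. The paper takes $\sigma$ to be the shift $n\mapsto n+1$ and shows in one step that every $z\in N^\ZZ$ can be written as $y^\sigma y^{-1}$ for some $y$ (via the recurrence $y_n=z_{n-1}y_{n-1}$), which lies in $\ker\psi$ automatically by invariance. Your route is a bit longer: you first show that elements invariant under a fixed-point-free involution lie in $\ker\psi$, and then factor an arbitrary element as a product of two such invariants using a pair of interlocking involutions. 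The paper's single-shift argument is more economical---one permutation, one recurrence, no intermediate ``invariant'' class---but your two-involution factorisation is a perfectly valid alternative and the recurrence you obtain is essentially of the same telescoping type.
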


\begin{proof}
Schreier--Ulam proved in~\cite[\S1.7]{Schreier-Ulam} that the only proper normal subgroups of $\Sym(\sC)$ are $\Sym_{\mathrm f}(\sC)$ and $\Alt_{\mathrm f}(\sC)$. Therefore, an index two subgroup of $H$ would have to be of the form $M\rtimes \Sym(\sC)$, where $M\lhd N^\sC$ is of index two and $\Sym(\sC)$-invariant. In particular, for every $y\in N^\sC$ and every $\sigma\in\Sym(\sC)$, we have $y^\sigma  y^{-1} \in M$. Let now $z$ be an arbitrary element of $N^\sC$. It suffices to express $z$ as $y^\sigma  y^{-1}$. To this end, we take $\sigma$ to be the shift under identifying $\sC$ with $\ZZ$. We can now define $y=(y_n)_{n\in \ZZ}$ inductively by taking $y_0$ arbitrary and setting $y_n = z_{n-1} y_{n-1}$ for $n>0$ and $y_n = z_n^{-1} y_{n+1}$ for $n<0$.
\end{proof}

\subsection{The generalised Wa\.zewski dendrites $D_S$}
Let $S$ be a non-empty subset of $\{3,4,\dots,\infty\}$; here $\infty$, standing for ``not finite'', will in fact be the cardinal~$\aleph_0$. There exists a dendrite, not reduced to a point, whose branch points all have orders in $S$ and with the following property: for every $n\in S$, every arc contains an element of order $n$. It follows that $\Br_n(D_S)$ is arc-wise dense. The existence of such a dendrite can be proved for instance by an inverse limit construction following the procedure given in~\cite[10.37]{Nadler} for the special case $S=\{\infty\}$. In fact, such a dendrite is unique up to homeomorphism by Theorem~6.2 in~\cite{Charatonik-Dilks}. It is called the \emph{generalised Wa\.zewski dendrite} $D_S$. The uniqueness implies:

\begin{lem}\label{lem:sub:DS}
The closure of any non-empty connected open subset of $D_S$ is homeomorphic to $D_S$.\qed
\end{lem}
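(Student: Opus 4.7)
The plan is to appeal to the uniqueness theorem of Charatonik--Dilks recalled just above, which characterises $D_S$ as the unique dendrite, not reduced to a point, whose branch points all have orders in $S$ and for which $\Br_n$ is arcwise dense for every $n\in S$. Setting $Y=\overline U$, the space $Y$ is a dendrite as a non-empty closed connected subset of $D_S$, and it is not a point since $D_S$ has no isolated points. So what remains is to verify the two branch-point conditions.

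The crucial observation, both for the order analysis and the arcwise denseness, is that two points of $Y\setminus\{x\}$ lie in the same component of $Y\setminus\{x\}$ if and only if they lie in the same component of $D_S\setminus\{x\}$: this is because arcs in the sub-dendrite $Y$ coincide with arcs in $D_S$. Thus the order of any $x\in Y$ as a point of $Y$ equals the number of components of $D_S\setminus\{x\}$ that meet $Y$. For an interior point $x\in U$ of order $n$ in $D_S$, each component $C$ of $D_S\setminus\{x\}$ is such that $C\cup\{x\}$ is a sub-dendrite having $x$ as an end point, so every open neighbourhood of $x$ meets every such $C$; in particular $U$ does, so the $Y$-order of $x$ is also $n$, and lies in $S$ when $x$ is a branch point. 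For a boundary point $x\in Y\setminus U$, the set $U$ is connected and disjoint from $\{x\}$, hence lies in a single component $C_0$ of $D_S\setminus\{x\}$, so $Y\se C_0\cup\{x\}$; moreover $U$ is connected and dense in $Y\setminus\{x\}$, which forces the latter to be connected. Hence $x$ is an end point of $Y$ and contributes no branch point.

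For the arcwise denseness of $\Br_n(Y)$ in $Y$, let $A\se Y$ be an arc. Then $A\cap U$ is a dense open subset of $A$ and therefore contains a non-degenerate sub-arc $A'\se U$. Arcwise denseness of $\Br_n(D_S)$ applied to $A'$ yields a point of $D_S$-order $n$ on $A$, which by the preceding paragraph also has $Y$-order $n$.

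The main obstacle is the boundary analysis: one must exclude the scenario in which some $x\in Y\setminus U$ creates a branch point of $Y$ of order outside $S$. Both the connectedness of $U$ (which pins it to a single direction at $x$) and the density of $U$ in $Y\setminus\{x\}$ (needed so that removing $x$ does not disconnect $Y$) are what force $x$ to be an end point.
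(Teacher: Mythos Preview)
Your argument is correct and follows exactly the route the paper intends: the paper's ``proof'' is simply the sentence ``The uniqueness implies'' together with a \qed, so you have supplied precisely the verification that $\overline U$ satisfies the Charatonik--Dilks characterisation of $D_S$. One small point worth making explicit: your claim that $A\cap U$ is dense in $A$ is not a general density transfer but follows from your previous paragraph, since every point of $Y\setminus U$ is an end of $Y$ and hence cannot lie in the interior of an arc of $Y$; thus the open interior of $A$ already sits inside $U$.
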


We point out that $\Ends(D_S)$ is uncountable. Indeed, it has no isolated points and it is Polish since it is a dense $G_\delta$ in $D_S$. The latter is the case because its complement can be written as the union of all arcs $[x,y]$ where $x,y$ range over the countable set $\Br(D_S)$.

\section{Normal subgroups from free arcs}\label{sec:free}
There are several natural normal subgroups of $\Homeo(X)$ related to free arcs: we define
$$\Fix(\Free(X))\ <\ \Stab_+(\Free(X))\ <\ \Stab(\Free(X))$$
to consist of the homeomorphisms for which every $I\in\Free(X)$ is fixed pointwise, respectively preserved together with its orientation, respectively just preserved.

Furthermore, we denote by $\Fix(\Ends(X))$ the normal subgroup of $\Homeo(X)$ consisting of the elements that fix every end of $X$.

\begin{prop}\label{prop:fix:stab}
Let $X$ be a dendrite admitting some free arc.

Then the group $\Fix(\Ends(X))$ is contained in $\Stab_+(\Free(X))$ and we have a decomposition
$$\Stab_+(\Free(X))\ =\ \Fix(\Free(X))\ \times\ \Fix(\Ends(X)) .$$
Moreover, the restriction maps to $\Homeo(I)$ induce an isomorphism
$$\Fix(\Ends(X))\ \cong \ \prod_{I\in \Free(X)} \Homeo_+(I).$$
\end{prop}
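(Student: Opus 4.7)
The approach proceeds in three stages, all ultimately resting on the patchwork Lemma~\ref{lem:patchwork} and on the fact that the endpoints of a maximal free arc lie in $\Br(X)\cup\Ends(X)$, since the interior of such an arc consists only of regular points.

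First, to establish the inclusion $\Fix(\Ends(X))\subseteq \Stab_+(\Free(X))$, the key observation is that every branch point $b\in\Br(X)$ is the median of three ends of $X$. Indeed, the closure $\overline{C}$ of each of the $\geq 3$ components $C$ of $X\setminus\{b\}$ is a non-trivial sub-dendrite in which $b$ has order one; any other order-one point lies in $C$ (open in $X$), so its order in $X$ agrees with its order in $\overline{C}$, making it an end of $X$. Choosing such an end in three distinct components, the three pairwise arcs meet precisely at $b$. Since any $g\in\Fix(\Ends(X))$ preserves arcs and hence medians, it fixes every branch point. Consequently $g$ fixes the endpoints of each maximal free arc $I$; the uniqueness of arcs in a dendrite then forces $g(I)=I$, and the orientation is preserved because both endpoints are fixed.

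For the decomposition $\Stab_+(\Free(X)) = \Fix(\Free(X)) \times \Fix(\Ends(X))$, given $g\in \Stab_+(\Free(X))$ I would build a companion $h\in\Homeo(X)$ by applying Lemma~\ref{lem:patchwork} to the family $\sU=\{\mathrm{int}(I):I\in\Free(X)\}$ with $f_U = g|_U$; the extension-by-identity requirement on the boundary $\{a,b\}$ of $U=\mathrm{int}(I)$ is met because orientation-preservation forces $g$ to fix the endpoints of $I$. By construction $h\in \Fix(\Ends(X))$ and $gh^{-1}\in \Fix(\Free(X))$. Both subgroups are normal in $\Stab_+(\Free(X))$ (the first is already normal in $\Homeo(X)$; the second by a routine conjugation check using the setwise preservation of free arcs), so the decomposition is an internal direct product provided the intersection is trivial. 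The main obstacle is this intersection-triviality, which reduces to showing that $A:=\bigcup\Free(X)\cup\Ends(X)$ is dense in $X$. If some non-empty open $V$ were disjoint from $A$, it would contain no end of $X$; considering the sub-dendrite $\overline V$, I would split into two cases. Either $\overline V$ admits a free arc as a dendrite, in which case density of $V$ in $\overline V$ produces a point of $V$ having an $X$-neighbourhood inside that arc, hence lying in a free arc of $X$, contradicting $V\cap A=\varnothing$. Or $\overline V$ has no free arc, so by the dichotomy recalled in the preliminaries its ends are dense in $\overline V$; but any such end in $V$ would, by openness of $V$, already be an end of $X$, so all ends of $\overline V$ lie in $\partial V$, a closed set of empty interior in $X$, whence $\overline V\subseteq\partial V$ and $V=\varnothing$.

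Finally, the restriction map $\Fix(\Ends(X))\to \prod_{I\in\Free(X)}\Homeo_+(I)$ is manifestly a group homomorphism, injective by the density just established, and surjective by the same patchwork construction: given any family $(f_I)$, assemble $g$ with $g|_I=f_I$ on each free arc and $g=\mathrm{id}$ elsewhere, using that $f_I\in\Homeo_+(I)$ fixes the endpoints of $I$ to supply the boundary condition of Lemma~\ref{lem:patchwork}; by inspection the resulting $g$ fixes every end of $X$.
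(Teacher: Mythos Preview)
Your argument is structurally the same as the paper's, but there is one genuine slip in the first stage. You assert that the endpoints of a maximal free arc lie in $\Br(X)\cup\Ends(X)$, justified by ``since the interior of such an arc consists only of regular points''. That justification is a non-sequitur, and the claim itself is false: take $X=[-1,1]$ with a segment attached at each point $1/n$ ($n\geq 1$); then $[-1,0]$ is a maximal free arc whose endpoint $0$ is a \emph{regular} point of $X$. The paper handles exactly this case by observing that such a regular endpoint must be a limit of branch points (by maximality), and is therefore fixed by $\Fix(\Ends(X))$ by continuity, since you have already shown all branch points are fixed. With that one extra sentence your first stage is complete.

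For the trivial-intersection step, your topological argument (density of $\bigcup\Free(X)\cup\Ends(X)$ via the dichotomy on $\overline V$) is correct once you take $V$ connected, but it is more work than needed. The paper's route is shorter: any regular point not lying in a free arc is automatically a limit of branch points (a small connected branch-free neighbourhood would contain a free arc through it), so an element of $\Fix(\Free(X))\cap\Fix(\Ends(X))$ already fixes every branch point (from your first step), every regular point (free arc or limit of branch points), and every end --- hence is the identity. This avoids the case analysis on the sub-dendrite $\overline V$ entirely. Your surjectivity/injectivity arguments for the product isomorphism coincide with the paper's use of the patchwork lemma.
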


\begin{proof}
We claim that $\Fix(\Ends(X))$ fixes every branch point $p$ of $X$. Indeed, there exists three arcs meeting only at $p$; by a maximality argument, the other extremity of each of those can be taken to be an end of $X$. Any homeomorphism fixing these three ends fixes $p$, whence the claim. Since any extremity of a maximal free arc is either an end, or a branch point, or a limit of branch points, the claim shows already that $\Fix(\Ends(X))$ is contained in $\Stab_+(\Free(X))$.

Next, we claim that the two normal subgroups $\Fix(\Ends(X))$ and $\Fix(\Free(X))$ intersect trivially; hence they commute and generate their direct product. For this we need to show that any homeomorphism in their intersection fixes every regular point of $X$. This is the case by the previous claim since any regular point either is a limit of branch points or is contained in a free arc.

Finally, we consider the restriction morphism from $\Stab_+(\Free(X))$ to $\prod_{I\in \Free(X)} \Homeo_+(I)$, whose kernel is exactly $\Fix(\Free(X))$; in particular it is injective on $\Fix(\Ends(X))$. On the other hand this map is surjective when restricted to $\Fix(\Ends(X))$ because of Lemma~\ref{lem:patchwork} applied to the interiors of all $I\in \Free(X)$. Both isomorphisms of Proposition~\ref{prop:fix:stab} follow.
\end{proof}

\begin{prop}\label{prop:free:dense}
Let $X$ be a dendrite admitting some free arc.

If $X$ is dendro-minimal, then the union of all maximal free arcs is dense in $X$.
\end{prop}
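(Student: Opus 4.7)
Suppose for contradiction that $\overline{U}\neq X$ and set $V := X \setminus \overline{U}$, a non-empty open subset of $X$. Since $G$ permutes the countable family $\Free(X)$ of maximal free arcs, the set $U$ is $G$-invariant, and hence so are $\overline{U}$ and $V$. A preliminary observation is that $V$ is not dense in $X$: the interior of any chosen maximal free arc $I_0$ is a non-empty open subset of $X$ contained in $U$, so $\overline U$ has non-empty interior.

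Two easy reductions dispose of several cases. If $V$ were connected, then $\overline V$ would be a $G$-invariant closed connected non-empty subset of $X$, i.e.\ a $G$-invariant sub-dendrite, and the previous remark shows it would be proper, contradicting dendro-minimality. Symmetrically, if $\overline U$ were connected, then $\overline U$ itself would be a proper $G$-invariant sub-dendrite (proper because $V\neq\varnothing$), again contradicting dendro-minimality. So I am left with the case in which both $V$ and $\overline U$ have multiple connected components.

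The core of the argument in this remaining case is to apply Corollary~\ref{cor:dis3} to the closure $\overline{V_\alpha}$ of a connected component $V_\alpha$ of $V$: this closure is a connected non-dense set, and the corollary provides, for every $p \in X$ and every component $C$ of $X \setminus \{p\}$, an element $g \in G$ with $g\overline{V_\alpha}\subseteq C$. Because $V$ is $G$-invariant, $gV_\alpha$ is again a component of $V$, so every component of every $X \setminus \{p\}$ contains a whole component of $V$. Dually, applying the corollary with $Z=I_0$ shows that every such $C$ also contains a $G$-translate of $I_0$, and hence meets $U$.

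The main obstacle, and the step I expect to absorb most of the work, is to convert this simultaneous ``spreading'' of $V$-components and maximal free arcs into a genuine contradiction. The subtle point is that the $G$-invariance of $V$ forbids transporting any piece of $U$ into $V$ by a group element, so the obstruction must be structural. I plan to combine the statements above with Lemma~\ref{lem:null} (the disjoint components of $V$ form a null-sequence, so their diameters tend to zero) and a careful choice of $p$: by choosing $p$ regular and close to an end of $X$, or by cutting repeatedly along $I_0$, one can make one component $C$ of $X\setminus\{p\}$ an arbitrarily thin ``tube'' along $I_0$. Forcing a component of $V$ into such a tube should eventually require it to meet the interior of $I_0 \subseteq U$, contradicting $V \cap U = \varnothing$.
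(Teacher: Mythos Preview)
Your case analysis and the two applications of Corollary~\ref{cor:dis3} are correct: in the residual case every component of every $X\setminus\{p\}$ does contain both a $G$-translate of $I_0$ and a full component of $V$. The gap is the final ``thin tube'' step, which rests on a false picture. If you slide $p$ along the interior of $I_0=[a,b]$ towards the endpoint $a$, the component of $X\setminus\{p\}$ on the $a$-side becomes a sub-arc of $I_0$ \emph{only when $a\in\Ends(X)$}. If $a$ is a branch point---and nothing forbids both extremities of a maximal free arc from being branch points---that component always contains every component of $X\setminus\{a\}$ other than the one carrying $I_0$; it never becomes thin, and a component of $V$ can sit permanently inside one of those side branches for all such $p$. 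Your alternative, taking $p$ near an arbitrary end $e$ of $X$, does produce an arbitrarily small component $C$, but your own argument that every $C$ meets $U$ already forces $e\in\overline U$, hence $e\notin V$; so $C$ is not contained in $U$, and a small component of $V$ inside $C$ has no reason to touch $I_0$ at all. In short, you have isolated the right ingredient but no mechanism to close the contradiction.

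The paper drops the contradiction setup entirely and uses Corollary~\ref{cor:dis3} directly. It is enough to show that every branch point $p$ lies in $\overline U$ (any regular or end point not already in a free arc is a limit of branch points). Pick $q\neq p$; if $[p,q]$ has no branch points near $p$ then $p$ bounds a free arc and we are done. Otherwise choose distinct branch points $p_n\to p$ on $[p,q]$ and, for each $n$, a component $c_n$ of $X\setminus\{p_n\}$ containing neither $p$ nor $q$. The $c_n$ are pairwise disjoint, hence a null-sequence by Lemma~\ref{lem:null}, and each contains a free arc by Corollary~\ref{cor:dis3} applied with $Z=I_0$. Thus free arcs accumulate at $p$. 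This is exactly your ``every $C$ meets $U$'' observation, but applied to a null-sequence of components converging to the target point rather than to a single component that one then tries to shrink.
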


\begin{proof}
It suffices to prove that any branch point $p\in X$ is a limit of points belonging to free arcs. Choose $q \neq p$; we can assume that $[p,q]$ contains a sequence $(p_n)$ of distinct branch points $p_n$ converging to $p$ because otherwise $p$ would bound a free arc in $[p,q]$. Let $c_n$ be a component of $X\setminus \{p_n\}$ not containing $p$ nor $q$. Then all $c_n$ are pairwise disjoint and therefore form a null-sequence by Lemma~\ref{lem:null}. It remains only to observe that each $c_n$ contains a free arc; this follows from Corollary~\ref{cor:dis3}, choosing $Z$ to be some free arc.
\end{proof}

\begin{cor}\label{cor:fix:stab}
Let $X$ be a dendrite admitting some free arc. If $X$ is dendro-minimal, then 
$$\Stab_+(\Free(X))\ =\  \Fix(\Ends(X))\ \cong \ \prod_{I\in \Free(X)} \Homeo_+(I).$$
Furthermore,
$$\Stab(\Free(X))\ =\ \Stab_+(\Free(X))$$
unless $X$ is reduced to an arc.
\end{cor}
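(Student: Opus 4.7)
The plan is to handle the two equalities separately. For $\Stab_+(\Free(X))=\Fix(\Ends(X))$, I would combine the direct product decomposition $\Stab_+(\Free(X))=\Fix(\Free(X))\times\Fix(\Ends(X))$ of Proposition~\ref{prop:fix:stab} with Proposition~\ref{prop:free:dense}: any element of $\Fix(\Free(X))$ fixes the dense set $\bigcup_{I\in\Free(X)}I$ pointwise and hence is the identity by continuity, so the factor $\Fix(\Free(X))$ collapses. The product isomorphism with $\prod_I\Homeo_+(I)$ is then inherited from Proposition~\ref{prop:fix:stab}.

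For the second equality, my plan is to argue by contradiction: suppose some $g\in\Stab(\Free(X))$ reverses the orientation of an arc $I=[a,b]\in\Free(X)$, so that $g(a)=b$ and $g(b)=a$. The key preliminary observation is that no maximal free arc other than $I$ can meet $a$ (and symmetrically $b$): if $J\neq I$ met $a$, then $g(J)=J$ would force $b=g(a)\in J$ as well, whereupon the uniqueness of arcs in a dendrite would give $[a,b]=I\subseteq J$, contradicting the maximality of $I$. Assuming also that $X$ is not reduced to an arc, I would then observe that $a$ cannot be an endpoint of $X$: otherwise $b=g(a)$ is also one, and the absence of branch points in the interior of $I$ together with the one-direction property of endpoints forces $X=I$. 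Hence $a$ has order at least two, providing a nonempty open component $C_a$ of $X\setminus\{a\}$ that is disjoint from $I\setminus\{a\}$.

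The contradiction will come by playing the density of maximal free arcs (Proposition~\ref{prop:free:dense}) against the preliminary observation: the open set $C_a$ must contain a point of a maximal free arc $J$, and since $a\notin J$, connectedness forces $J\subseteq C_a$. But then $g(J)=J$ also lies in $g(C_a)$, which is a component of $X\setminus\{b\}$. A short arc-traversal argument shows $C_a\cap g(C_a)=\varnothing$: any $x\in C_a$ satisfies $[x,b]\supseteq[a,b]$, so the $b$-component of $x$ contains $I\setminus\{b\}$, whereas $g(C_a)$ is the $g$-image of a component of $X\setminus\{a\}$ avoiding $I\setminus\{a\}$, and thus itself avoids $I\setminus\{b\}$. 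This gives $J\subseteq C_a\cap g(C_a)=\varnothing$, the desired contradiction. The main obstacle is the component book-keeping---identifying which component of $X\setminus\{a\}$ maps to which component of $X\setminus\{b\}$ under $g$---but once that is pinned down, everything reduces to the uniqueness of arcs in a dendrite and the density of maximal free arcs.
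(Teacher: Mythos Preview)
Your argument is correct and follows essentially the same route as the paper's proof. The first equality is handled identically. For the second, the paper also picks an extremity $p$ of $I$ that is not an end, takes a component $c$ of $X\setminus\{p\}$ avoiding $I$, places a maximal free arc $J$ inside $c$, and concludes that $g$ cannot swap the extremities of $I$ since $g(J)=J$ would have to sit in two disjoint components. The only difference is cosmetic: the paper invokes Corollary~\ref{cor:dis3} to transport a copy of $I$ directly into $c$, whereas you use the density statement of Proposition~\ref{prop:free:dense} together with your preliminary observation (no other maximal free arc touches $a$) to ensure $J\subseteq C_a$. Both devices achieve the same placement of $J$, and your component book-keeping for the final contradiction is exactly the content of the paper's terse last sentence.
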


\begin{proof}
By Proposition~\ref{prop:free:dense}, the group $\Fix(\Free(X))$ is trivial. Therefore, the first statement follows from Proposition~\ref{prop:fix:stab}.

It remains to justify that every element $g$ of $\Stab(\Free(X))$ preserves the orientation of any $I\in\Free(X)$ unless $X=I$. Thus we can assume that at least one extremity $p$ of $I$ is not an end of $X$. Therefore, there is a component $c$ of $X\setminus \{p\}$ which does not meet $I$. By Corollary~\ref{cor:dis3}, $c$ contains a maximal free arc $J$, namely an image of $I$. Since $g$ preserves $J$ and $I$, it cannot permute the extremities of $I$.
\end{proof}

Define $\Aut(\Free(X))$ to be the image of $\Homeo(X)$ in the permutation group of the countable set $\Free(X)$; in other words,
$$\Aut(\Free(X)) \ =\ \Homeo(X) / \Stab(\Free(X)).$$

\begin{thm}\label{thm:semi:wreath}
Let $X$ be a dendrite admitting some free arc, but not reduced to an arc.

If $X$ is dendro-minimal, then $\Homeo(X)$ admits a semi-direct product decomposition
$$\Homeo(X)\ \cong\ \Fix(\Ends(X))\ \rtimes \Aut(\Free(X))$$
and hence it is the permutational wreath product
$$\Homeo(X)\ \cong\ \prod_{I\in \Free(X)} \Homeo_+(I) \ \rtimes \Aut(\Free(X)).$$
\end{thm}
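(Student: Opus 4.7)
The second isomorphism is an immediate consequence of the first combined with the identification $\Fix(\Ends(X)) \cong \prod_{I\in \Free(X)} \Homeo_+(I)$ from Corollary~\ref{cor:fix:stab}, so the real task is to split the short exact sequence
$$1 \longrightarrow \Stab(\Free(X)) \longrightarrow \Homeo(X) \longrightarrow \Aut(\Free(X)) \longrightarrow 1,$$
whose kernel, again by Corollary~\ref{cor:fix:stab}, coincides with $\Fix(\Ends(X))$. The plan is to fix once and for all a parametrisation $\fhi_I \colon I \to [0,1]$ for each $I \in \Free(X)$ and to use these parametrisations to manufacture a canonical lift $s(\sigma) \in \Homeo(X)$ of each $\sigma \in \Aut(\Free(X))$, realised on every free arc as the unique ``affine'' map between the chosen coordinates.

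For $g \in \Homeo(X)$ and $I \in \Free(X)$, let $\epsilon(g,I) \in \{+1,-1\}$ record whether the restriction $g|_I \colon I \to g(I)$ is orientation-preserving with respect to $\fhi_I$ and $\fhi_{g(I)}$. The key observation is that $\Stab(\Free(X)) = \Stab_+(\Free(X))$ by Corollary~\ref{cor:fix:stab} (this is where the hypothesis that $X$ is not reduced to an arc is used); consequently $\epsilon(g,I)$ depends only on the image $\sigma \in \Aut(\Free(X))$ of $g$, and one may write $\epsilon(\sigma,I)$ for this common value. Then $s(\sigma)$ is defined on each $I \in \Free(X)$ by
$$s(\sigma)|_I \ =\ \fhi_{\sigma(I)}^{-1} \circ T_{\epsilon(\sigma,I)} \circ \fhi_I \colon I \longrightarrow \sigma(I),$$
where $T_{+1}(t) = t$ and $T_{-1}(t) = 1-t$ on $[0,1]$.

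To show that $s(\sigma)$ extends from $\bigcup \Free(X)$ to an honest homeomorphism of $X$, I would pick any representative $g \in \Homeo(X)$ of $\sigma$ and note that, on each $I$, the composition $g^{-1} s(\sigma)$ is a self-homeomorphism of $I$ fixing both endpoints, since $g|_I$ and $s(\sigma)|_I$ carry the same sign $\epsilon(\sigma,I)$. Hence $g^{-1} s(\sigma)$ defines an element of $\prod_{I} \Homeo_+(I) \cong \Fix(\Ends(X))$, which Corollary~\ref{cor:fix:stab} (whose isomorphism relies on the patchwork Lemma~\ref{lem:patchwork}) realises as a genuine element of $\Homeo(X)$; therefore $s(\sigma) = g \cdot (g^{-1}s(\sigma))$ also belongs to $\Homeo(X)$. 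The homomorphism property $s(\sigma\tau) = s(\sigma)s(\tau)$ reduces to the multiplicativity
$$\epsilon(\sigma\tau,I)\ =\ \epsilon(\sigma, \tau(I))\,\epsilon(\tau,I),$$
which is immediate from the chain rule $(gh)|_I = g|_{h(I)} \circ h|_I$. By construction $s$ is a section of the projection to $\Aut(\Free(X))$, yielding the semi-direct product decomposition and, upon substitution via Corollary~\ref{cor:fix:stab}, the wreath product form.

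The main subtlety I anticipate is precisely the well-definedness of the orientation sign $\epsilon(\sigma,I)$. If $\Stab(\Free(X))$ were strictly larger than $\Stab_+(\Free(X))$, two representatives of $\sigma$ could produce opposite signs and no coherent section would survive; the hypothesis that $X$ is not reduced to an arc, transmitted through Corollary~\ref{cor:fix:stab}, is exactly what rules this out.
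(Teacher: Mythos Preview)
Your proof is correct and follows essentially the same route as the paper. Both arguments fix parametrisations of the maximal free arcs, record an orientation sign (your $\epsilon(\sigma,I)$, the paper's $\tau_I^g$), and build the section arc-by-arc as the ``affine'' map in those coordinates; the paper phrases this as $g\mapsto \overline g = g\,(f^g)^{-1}$ and then checks it factors through $\Aut(\Free(X))$, while you define $s(\sigma)$ directly on $\Aut(\Free(X))$ after first observing that the sign is well-defined there --- but unwinding the paper's $\overline g|_I$ gives exactly your $\fhi_{\sigma(I)}^{-1}\circ T_{\epsilon(\sigma,I)}\circ \fhi_I$, so the two constructions coincide.
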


\begin{proof}
We choose for each $I\in \Free(X)$ a homeomorphism $\alpha_I\colon [0,1]\to I$. We further choose an involution $\tau$ of $[0,1]$ that exchanges the end-points.

If now $g$ is an arbitrary element of $\Homeo(X)$, we  define for $I\in \Free(X)$ the involution $\tau_I^g$ of $[0,1]$ to be trivial if $g\alpha_I$ and $\alpha_{gI}$ give the same orientation to $gI$ and to be $\tau$ otherwise. This can be written more obscurely as $\tau_I^g = \tau^{\alpha_{gI}\inv g \alpha_I(0)}$; in any case, we have the cocycle relation $\tau_I^{g_1 g_2} = \tau_{g_2 I}^{g_1} \tau_I^{g_2}$. We define furthermore the map $f_I^g$ on $I$ by $f_I^g= \alpha_I\circ \tau_I^g\circ \alpha_{gI}\inv\circ g|_I$. Then $f_I^g$ is in $\Homeo_+(I)$. Therefore, Proposition~\ref{prop:fix:stab} ensures that there is $f^g\in \Fix(\Ends(X))$ with $f^g|_I= f_I^g$ for all $I\in \Free(X)$. We can therefore define an element $\overline g\in\Homeo(X)$ by $\overline g = g \circ (f^g)\inv$.

The cocycle relation for $\tau_I^g$ shows that the assignment $g\mapsto \overline g$ is a group homomorphism. More precisely, this holds by construction when we evaluate on any point in a free arc and thus everywhere since the union of all free arcs is dense by Proposition~\ref{prop:free:dense}.

This homomorphism is trivial on $\Stab(\Free(X))$. Indeed, any $g$ in that group preserves the orientation of every free arc since $X$ is not an arc, see Corollary~\ref{cor:fix:stab}. Therefore the construction of $f_I^g$ shows that $f_I^g=g|_I$, hence $\overline g$ is trivial. In conclusion, the morphism $g\mapsto \overline g$ provides a section from  $\Aut(\Free(X))$ to $\Homeo(X)$ because $f^g$ is in  $\Fix(\Ends(X)$ which coincides with $\Stab(\Free(X))$ by Corollary~\ref{cor:fix:stab}.

Finally, the wreath product statement follows from the naturality of the isomorphism between $\Fix(\Ends(X))$ and $\prod_{I\in \Free(X)} \Homeo_+(I)$.
\end{proof}

\section{Normal subgroups from stabilisers}
Let $X$ be an arbitrary dendrite. A priori there is a number of natural normal subgroups of $\Homeo(X)$ associated to point stabilisers. Just to keep the arguments below more legible, we temporarily write $G=\Homeo(X)$ and introduce the following notation: $\Gb$, $\Ge$ and $\Gr$ are the subgroups of $G$ generated by all stabilisers of branch points, respectively of end points and regular points. Similarly, $\Ga$ denotes the subgroup generated by all pointwise stabilisers of non-trivial arcs, and $\Gc$ by all pointwise stabilisers of connected components of the complement of points.

All these are normal subgroups of $G$.

\begin{thm}\label{thm:fix:incl}
Let $X$ be any dendro-minimal dendrite not reduced to an arc. Then
$$\Ga = \Gb = \Gc < \Gr < \Ge =G. $$
\end{thm}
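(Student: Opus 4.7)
The strategy is to prove the chain stepwise, using two main tools: Lemma~\ref{lem:patchwork} (patchwork) to assemble homeomorphisms from pieces that become the identity at their boundary, and Lemmas~\ref{lem:dis2} and~\ref{lem:dis+} (disjunction) to exploit dendro-minimality by transporting sub-dendrites into prescribed positions. The concluding equality $\Ge=G$ will follow from the dendrite fixed-point theorem combined with the chain already established.

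\textbf{The equalities $\Ga=\Gc=\Gb$.} The inclusion $\Gc\subseteq\Ga$ is immediate: every component of $X\setminus\{p\}$ is open and connected in a non-degenerate dendrite, hence contains a non-trivial arc, which is fixed pointwise whenever the component is. For $\Ga\subseteq\Gc$, given $g$ fixing an arc $[x,y]$ pointwise, pick an interior point $z$; then $g$ fixes $z$ and preserves the two components $C_x,C_y$ of $X\setminus\{z\}$ containing $x$ and $y$ respectively. Lemma~\ref{lem:patchwork} factors $g$ into three homeomorphisms, respectively agreeing with $g$ on $\overline{C_x}$, on $\overline{C_y}$, and on the union of the remaining (possibly $g$-permuted) components, and being the identity elsewhere; each factor pointwise fixes a component of $X\setminus\{z\}$ and so lies in $\Gc$. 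For $\Ga\subseteq\Gb$, Lemma~\ref{lem:dis+} conjugates any fixed arc onto one whose interior contains a prescribed branch point $b$, after which normality of $\Gb$ finishes the argument. For $\Gb\subseteq\Gc$, given $g\in\Stab(b)$, consider its action on the components of $X\setminus\{b\}$: if there are at least two $g$-orbits, partition them into non-empty $g$-invariant sets $A\sqcup B$, and patchwork yields $g=g_Ag_B$ with each factor in $\Gc$. In the single-orbit case, first right-multiply by a patchwork-built ``swap'' of two components (itself in $\Gc$) to reduce to the multi-orbit case.

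\textbf{The inclusions $\Gc\subseteq\Gr\subseteq\Ge$ and the equality $\Ge=G$.} The first inclusion is trivial, as every component contains regular points, which are fixed by any element fixing the component pointwise. For $\Gr\subseteq\Ge$, given $g\in\Stab(r)$ with $r\in\Reg(X)$, the set $X\setminus\{r\}$ has exactly two components $C_1,C_2$; each closure $\overline{C_i}$ is a non-trivial sub-dendrite with $r$ as one end, hence has at least a second end, necessarily an end of $X$ (since $C_i$ is open in $X$). If $g$ preserves $\{C_1,C_2\}$ componentwise, patchwork splits $g=g_1g_2$ with $g_i$ agreeing with $g$ on $\overline{C_i}$ and the identity elsewhere; each $g_i$ fixes the complementary half pointwise and in particular fixes an end of $X$. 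The component-swap case is handled by pre-composing with an auxiliary patchwork-built involution fixing an end in each half. Finally, $\Ge=G$ falls out elegantly: every $g\in G$ has a fixed point $p\in X$ by the dendrite fixed-point theorem, and accordingly $g\in\Stab(p)$ lies in $\Ge$ directly if $p\in\Ends(X)$, in $\Gb\subseteq\Ge$ if $p\in\Br(X)$, and in $\Gr\subseteq\Ge$ if $p\in\Reg(X)$.

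\textbf{Main obstacle.} The most delicate step is $\Gb\subseteq\Gc$ when $g\in\Stab(b)$ acts with a single (possibly infinite) orbit on the components of $X\setminus\{b\}$. The reduction to the multi-orbit case via a patchwork ``swap'' requires constructing a homeomorphism of $X$ that exchanges two prescribed components, restricts to a specified conjugation between them, and is the identity elsewhere; ensuring this swap genuinely lies in $\Gc$, and that the subsequent iteration terminates in a decomposition of $g$, is where the argument requires the most care. Everywhere else, the proof is a clean pairing of the patchwork lemma with the dendro-minimality transport supplied by Lemma~\ref{lem:dis+}.
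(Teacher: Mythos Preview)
Your handling of $\Gr\subseteq\Ge$ in the component-swap case has a genuine gap. You propose ``an auxiliary patchwork-built involution fixing an end in each half,'' but no such map exists: any $\tau$ fixing $r$ and exchanging $C_1,C_2$ sends each $C_i$ into the other, so $\tau$ fixes no point of $X\setminus\{r\}$ and in particular no end. There is thus no evident reason for $\tau$ to lie in $\Ge$, and the reduction stalls. You have correctly reduced to the hardest case but misidentified it; the ``main obstacle'' you flag (the single-orbit case of $\Gb\subseteq\Gc$) is in fact routine, whereas this swap case is the genuine crux. The paper's argument here is substantially different: one first shows that any homeomorphism admitting an austro-boreal arc lies in $\Ge$ (its endpoints are fixed and, being on an invariant arc, cannot be swap-type regular points). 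Then, given $g$ swapping the halves at $r$, one picks $h$ with an austro-boreal arc $I$, uses Lemma~\ref{lem:dis+} to arrange $I\subseteq C_2$ with the first-point projection of $r$ onto $I$ a branch point $b$ in the interior of $I$, and proves by contradiction that $hg$ cannot have a swap-type regular fixed point: two well-chosen arcs of the form $[x,hgx]$ would both have to contain that fixed point, yet they retract onto the distinct points $b\neq hb$ of $I$ and are therefore disjoint. Hence $hg\in\Ge$, and since $h\in\Ge$, also $g\in\Ge$.

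A smaller issue: your step $\Ga\subseteq\Gb$ via Lemma~\ref{lem:dis+} fails when the fixed arc is a free arc, since homeomorphisms preserve the free-arc property and cannot conjugate a branch point into its interior. This is easily repaired: under dendro-minimality (Corollary~\ref{cor:dis3}) every component of $X\setminus\{x\}$ contains a branch point, giving $\Gc\subseteq\Gb$ directly, which together with your $\Ga=\Gc$ and $\Gb\subseteq\Gc$ closes the circle. Your $\Gb\subseteq\Gc$ argument is correct, though the paper's device is shorter: for $g$ fixing a branch point $b$, pick any component $C$, set $h=g$ on $C$, $h=g^{-1}$ on $gC$, and $h=\mathrm{id}$ elsewhere; then $h\in\Gc$ since at least one of the $\geq 3$ components is fixed pointwise, and $gh^{-1}$ fixes $C$ pointwise.
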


In fact, the first two equalities above are basic and more general:

\begin{lem}\label{lem:fix:incl}
For any dendrite not reduced to a point, $\Ga = \Gb = \Gc <  \Ge\cap \Gr$.
\end{lem}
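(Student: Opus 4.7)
The plan is to establish the equality chain $\Ga = \Gb = \Gc$ via the cyclic inclusions $\Gc \subseteq \Ga \subseteq \Gb \subseteq \Gc$, and to prove $\Gc \subseteq \Ge \cap \Gr$ separately. The first inclusion $\Gc \subseteq \Ga$ is immediate: any generator $g$ of $\Gc$ pointwise fixes a component $C$ of $X\setminus\{p\}$, hence pointwise fixes every non-trivial arc contained in $\overline C$, so it is also a generator of $\Ga$. For $\Ga \subseteq \Gb$, let $g$ pointwise fix a non-trivial arc $I$. If $I$ contains a branch point $b$, then $g \in \Stab(b) \subseteq \Gb$ directly. Otherwise $I$ sits inside a maximal free arc $J$; since distinct maximal free arcs meet in at most one point, the identity $g(I) = I$ forces $g(J) = J$. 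Parametrising $J$ as an interval, a homeomorphism of $J$ fixing a non-trivial sub-arc pointwise must be order-preserving (an order-reversing map with $g(a)=a$ could not also satisfy $g(x)=x$ on $(a,b]$), and therefore fixes both endpoints of $J$. Assuming $X$ is not itself an arc, at least one endpoint of $J$ is a branch point of $X$, giving $g \in \Gb$.

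The main step is $\Gb \subseteq \Gc$. Take $g \in \Stab(b)$ and write $\{C_j\}$ for the components of $X\setminus\{b\}$, of which there are at least three; fix any $C_0$. If $g(C_0)=C_0$, define $h \in \Homeo(X)$ to agree with $g$ on $\overline{C_0}$ and be the identity on $X \setminus \overline{C_0}$. Continuity at $b$ follows from $g(b)=b$, and Lemma~\ref{lem:patchwork} applied to the single open connected set $C_0$ confirms that $h$ is a homeomorphism. Then $h$ pointwise fixes every other component, so $h \in \Gc$, while $gh^{-1}$ pointwise fixes $\overline{C_0}$, so $gh^{-1}\in \Gc$ as well; hence $g=(gh^{-1})h \in \Gc$. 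If instead $g(C_0)\neq C_0$, introduce the homeomorphism $\sigma$ which exchanges $\overline{C_0}$ and $\overline{g(C_0)}$ via $g|_{\overline{C_0}}$ and its inverse, and is the identity on every other component and at $b$. Because $b$ has at least three adjacent components, $\sigma$ pointwise fixes some third component, so $\sigma \in \Gc$; and $\sigma g$ satisfies $\sigma g(C_0)=C_0$, reducing to the previous case.

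For $\Gc \subseteq \Ge \cap \Gr$, a generator $g$ of $\Gc$ pointwise fixes the non-trivial sub-dendrite $\overline C$, which has at least two ends. One of these is $p$, but any other end $e$ lies in the open subset $C$ of $X$, where the local order of $e$ in $X$ coincides with its local order in $\overline C$; hence $e$ is an end of $X$ fixed by $g$, so $g \in \Ge$. Similarly, interior points of any arc inside $C$ are regular in $X$, yielding $g \in \Gr$. The principal obstacle is the decomposition argument in $\Gb \subseteq \Gc$: one must verify the patchwork construction yields a genuine homeomorphism at each step, and the hypothesis that a branch point carries at least three adjacent components is essential for the transposition $\sigma$ to land in $\Gc$.
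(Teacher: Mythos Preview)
Your argument for $\Gb\subseteq\Gc$ and for $\Gc\subseteq\Ge\cap\Gr$ is correct and essentially matches the paper's. The gap is in your step $\Ga\subseteq\Gb$: you assert that, when $X$ is not an arc, at least one endpoint of a maximal free arc $J$ is a branch point of $X$. This is false. An endpoint of a maximal free arc may be an end of $X$ or a \emph{regular} point that is merely a limit of branch points. For a concrete counterexample, let $X$ be the arc $[0,2]$ with a short arc attached at each point $1+\tfrac1n$ ($n\geq 2$); then $[0,1]$ is a maximal free arc whose endpoints are the end $0$ and the regular point $1$, neither a branch point. Thus a homeomorphism fixing a sub-arc of $[0,1]$ pointwise need not visibly stabilise any branch point, and your deduction $g\in\Gb$ breaks down.

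The paper closes the cycle differently: instead of $\Ga\subseteq\Gb$, it proves $\Ga\subseteq\Gc$ directly. If $g$ fixes a non-trivial arc $I$ pointwise, pick a regular interior point $r\in I$; then $g$ fixes $r$ without exchanging the two components $C_1,C_2$ of $X\setminus\{r\}$, so $g=g_1 g_2$ with $g_i$ acting as $g$ on one component and as the identity on the other. Each $g_i$ is a generator of $\Gc$, giving $g\in\Gc$. Combined with $\Gc\subseteq\Ga$ this yields $\Ga=\Gc$, avoiding maximal free arcs altogether. (As you already noticed, the case where $X$ is itself an arc is degenerate and the equalities with $\Gb$ fail there; that is an issue with the lemma's hypothesis rather than with either proof.)
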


\begin{proof}[Proof of Lemma~\ref{lem:fix:incl}]
By definition, we have $\Gc <  \Ga\cap \Gb\cap \Ge\cap \Gr$. We first claim that  $\Gb < \Gc$. Let thus $g\in G$ fix a branch point $x$. We define a homeomorphism $h$ that coincides with $g$ on some component $C$ of $X\setminus\{x\}$, that coincides with $g^{-1}$ on $g C$ and is the identity elsewhere. Since $X\setminus\{x\}$ has at least three connected components, $h\in \Gc$. Now, $g=(gh^{-1})h$ and $gh^{-1}$ fixes $C$ pointwise. Therefore, $g\in \Gc$. This proves the first claim and hence  $\Gb = \Gc$.

Our second claim is that any element $g$ fixing a regular point $r$ without exchanging the two components $C_1, C_2$ of $X\setminus\{r\}$ belongs to $\Gc$. Indeed, $g$ can then be written $g=g_1g_2$ where $g_i$ fixes pointwise $C_i$ and acts as $g$ does on the other component.

This second claim implies in particular $\Ga< \Gc$ and thus  $\Ga= \Gc$.
\end{proof}

\begin{proof}[Proof of Theorem~\ref{thm:fix:incl}]
We claim that any element $g\in G$ which admits an austro-boreal arc belongs to $\Ge$. Indeed, this is the case if an extremity of such an arc is an end of $X$, but also if it is a branch point since $\Gb <  \Ge$ by Lemma~\ref{lem:fix:incl}. If on the other hand the extremities are regular points, then $g$ cannot exchange the corresponding components since it preserves the arc. Thus we conclude again $g\in\Gc<\Ge$, this time by the second claim of the proof of Lemma~\ref{lem:fix:incl}.

Our main goal is to prove  $\Gr < \Ge$. It suffices to consider an element $g\in G$ which fixes a regular point $r$ and exchanges the components $C_1, C_2$ of $X\setminus\{r\}$. By Theorem~10.5 in~\cite{DM_dendrites}, $G$ contains an element $h$ admitting an austro-boreal arc $I$. By Lemma~\ref{lem:dis+}, we can assume, after conjugating $h$, that $I$ lies in $C_2$ and that the first-point map sends $r$ to some branch point $b$ in the interior of $I$.

We shall now prove that $hg \in \Ge$, which implies indeed $g\in \Ge$ since $h\in \Ge$ by the above claim. We know that $hg$ fixes some point $p\in X$. Given what we have established so far, the only problematic case is if $p$ is a regular point and if moreover $hg$ permutes the components of $X\setminus\{p\}$. We shall prove by contradiction that this cannot happen.

Indeed, in that case $p$ is the only fixed point of $hg$ and, moreover, for every $x\in X$ the arc $[x, hg x]$ contains $p$. We first apply this observation to $x=h b \in I$. Then the arc $[x, hg x]$ starts from $hb$, crosses $hr$ and ends in $h C_1$. Our assumption on the respective position of $I$ and $r$ shows that this entire arc projects to $hb$ under the first-point map $X\to I$. We now apply the observation to $x=(hg)\inv b \in C_1$. This time the arc $[x, hg x]$ starts in $C_1$, crosses $r$ and ends at $b$. Therefore, this arc projects  to $b$ under the first-point map $X\to I$. It follows that the two arcs do not intersect, a contradiction.

At this point we know that both $\Gb$ and $\Gr$ are contained in $\Ge$; since any homeomorphism of $X$ must fix some point, it follows that $G=\Ge$.
\end{proof}

\begin{lem}\label{lem:dm}
Let $X$ be a dendrite and let $N$, $H$ be subgroups of $\Homeo(X)$. 

Suppose that $H$ acts dendro-minimally on $X$ and normalises $N$. Then either $N$ acts dendro-minimally on $X$ or $N$ fixes $\Ends(X)$ pointwise.

In particular, if $X$ has no free arc, then any non-trivial normal subgroup of a dendro-minimal subgroup of $\Homeo(X)$ is dendro-minimal as well.
\end{lem}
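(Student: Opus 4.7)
I argue the contrapositive: suppose $N$ does not act dendro-minimally on $X$, and choose a proper $N$-invariant sub-dendrite $Y\subsetneq X$. The goal is to show that every $e\in\Ends(X)$ is fixed by $N$. The key preliminary observation is that, because $H$ normalises $N$, every translate $hY$ ($h\in H$) is again proper and $N$-invariant: for $n\in N$ we have $n(hY)=h(h^{-1}nh)Y=hY$ since $h^{-1}nh\in N$ and $Y$ is $N$-invariant.

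Fix $e\in\Ends(X)$. The strategy is to push translates $h_nY$ into arbitrarily small neighborhoods of $e$ and take a Hausdorff limit. I first produce a neighborhood base $(U_n)_{n\ge 1}$ of $e$ with each $U_n$ a connected component of some $X\setminus\{p_n\}$ and $\diam(U_n)\to 0$. For each $n$, Lemma~\ref{lem:cut:small} provides a finite $F_n\subseteq X$ whose complementary components have diameter less than $1/n$; let $p_n$ be the first-point projection of $e$ onto the sub-dendrite $[F_n]$ and $U_n$ the component of $X\setminus\{p_n\}$ containing $e$. If $U_n\cap F_n$ contained some $f$, the arc $[e,f]$ would lie in $U_n$ (which is open, hence arcwise connected) while necessarily passing through $p_n$ by definition of the first-point map onto $[F_n]$; this contradicts $p_n\notin U_n$. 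Hence $U_n$ is contained in the component of $X\setminus F_n$ containing $e$, and so $\diam(U_n)<1/n$.

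Since $Y$ is proper, it is not dense, so Corollary~\ref{cor:dis3} applied to the dendro-minimal action of $H$ furnishes $h_n\in H$ with $h_nY\subseteq U_n$. By compactness of the Hausdorff hyperspace of $X$, after extracting a subsequence $h_nY$ converges to a non-empty closed $K\subseteq X$; the containments $h_nY\subseteq U_n$ together with $e\in U_n$ and $\diam(U_n)\to 0$ force $K=\{e\}$. Now fix $g\in N$: the $N$-invariance of $h_nY$ reads $g(h_nY)=h_nY$ for every $n$, and continuity of $g$ on the Hausdorff hyperspace then yields $g\{e\}=\{e\}$, i.e.\ $ge=e$. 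Thus $N$ fixes every end of $X$.

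The ``in particular'' statement follows at once: when $X$ has no free arc, $\Ends(X)$ is dense in $X$ by the dichotomy recorded in the preliminaries, so any homeomorphism of $X$ that fixes $\Ends(X)$ pointwise must be the identity; hence a non-trivial $N$ cannot fix $\Ends(X)$ pointwise and therefore, by the first part, acts dendro-minimally. The conceptual heart of the argument is the Hausdorff-limit step in the third paragraph, which couples normality of $N$ (so that each $h_nY$ is itself $N$-invariant) with dendro-minimality of $H$ (so that these sub-dendrites can be shrunk around any prescribed end); the remaining delicate point is the construction of the shrinking one-point-deletion neighborhoods $U_n$ of $e$ in the second paragraph.
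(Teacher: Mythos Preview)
Your argument is correct, but it follows a genuinely different path from the paper's. The paper proceeds by a dichotomy on whether $N$ has a fixed point in $X$. If $N$ has no fixed point, it invokes the fact (Remark~4.2 in~\cite{DM_dendrites}) that there is then a \emph{unique} minimal $N$-invariant sub-dendrite; uniqueness forces this sub-dendrite to be $H$-invariant, hence equal to $X$ by dendro-minimality of $H$. If $N$ does fix some point $x$, the $H$-orbit of $x$ consists of $N$-fixed points, and Lemma~4.4 of~\cite{DM_dendrites} gives that its closure contains $\Ends(X)$.

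Your route avoids both of these external citations: you work directly with an arbitrary proper $N$-invariant sub-dendrite, push its $H$-translates (still $N$-invariant by normality) into shrinking one-point-deletion neighbourhoods of a given end via Corollary~\ref{cor:dis3}, and pass to a Hausdorff limit. The cost is the extra machinery of the hyperspace and the explicit construction of the neighbourhoods $U_n$ from Lemma~\ref{lem:cut:small}; the gain is that the argument is entirely self-contained within the present paper and does not rely on the uniqueness of minimal invariant sub-dendrites. The paper's proof is shorter and more structural, while yours is more elementary in its inputs. One cosmetic remark: in your second paragraph you may want to note explicitly that $e\notin[F_n]$ (since $e$ is an end of $X$ not lying in $F_n$, it cannot be a point of the finite tree $[F_n]$), so that $p_n\neq e$ and $U_n$ is well defined.
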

 
\begin{proof}
If $N$ has no fixed point in $X$, then there is a \emph{unique} minimal $N$-invariant sub-dendrite $Y$ in $X$, see Remark~4.2 in~\cite{DM_dendrites}. Since $H$ normalises $N$, it preserves $Y$ and thus $Y=X$.

If on the other hand $N$ has a fixed point $x\in X$, then the $H$-orbit of $x$ consists of $N$-fixed points, and hence so does its closure. This closure contains $\Ends(X)$ by Lemma~4.4 in~\cite{DM_dendrites}.

The additional statement follows from the fact that $\Ends(X)$ is dense in $X$ if $X$ has no free arc (see e.g.~\cite[4.7]{DM_dendrites}).
\end{proof}

\begin{thm}\label{thm:simple}
Let $X$ be any dendro-minimal dendrite not reduced to an arc. Let $N<G=\Homeo(X)$ be any subgroup normalised by $\Gb$.

Then either $N$ contains $\Gb$ or $N$ fixes $\Ends(X)$ pointwise.
\end{thm}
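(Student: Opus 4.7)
The goal is, under the assumption $N\not\subseteq\Fix(\Ends(X))$, to deduce $\Gb\subseteq N$. The strategy is first to upgrade the hypothesis into dendro-minimality of $N$, then to run a commutator argument on a generating family of $\Gb$. I would begin by applying Lemma~\ref{lem:dm} twice. The first application takes $H=G$ together with the normal subgroup $\Gb$ and shows that $\Gb$ itself acts dendro-minimally on $X$: the alternative $\Gb\subseteq\Fix(\Ends(X))$ is excluded because, since $X$ is not reduced to an arc, there is a branch point $b$ whose stabiliser $G_b\subseteq\Gb$ contains homeomorphisms permuting the at least three components of $X\setminus\{b\}$ and hence moving ends. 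A second application with $H=\Gb$ (now dendro-minimal and normalising $N$) and the subgroup $N$ then excludes the alternative by our standing assumption, yielding that $N$ itself is dendro-minimal.

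Next, by Theorem~\ref{thm:fix:incl} we have $\Gb=\Gc$, so it suffices to show that every $a\in G$ fixing pointwise a connected component $C$ of $X\setminus\{p\}$ belongs to $N$. Such an $a$ has support contained in the proper sub-dendrite $Y:=X\setminus C$. By dendro-minimality of $N$ and Lemma~\ref{lem:dis2}, pick $n\in N$ with $nY\cap Y=\varnothing$, so that $nY\subseteq C$. Then $a$ and $nan^{-1}$ have disjoint supports in $Y$ and $nY$ respectively and therefore commute; a three-zone computation gives $[a,n]=a\cdot(na^{-1}n^{-1})$. Since $a$ normalises $N$, $[a,n]\in N$, and hence
\[ a\cdot na^{-1}n^{-1}\ \in\ N, \qquad\text{i.e.,}\qquad a\equiv nan^{-1}\pmod{N}. \]

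The main obstacle is to close this congruence to a genuine equality $a\in N$---equivalently, to show that the ``garbage'' term $nan^{-1}\in\Gb$, which is supported strictly inside $C$, itself lies in $N$. The natural attack is to iterate the same commutator construction on $nan^{-1}$, producing a new garbage in a displaced region $n_1 n Y$, and so on; by arranging via Lemma~\ref{lem:dis2} for the successive supports to be pairwise disjoint, Lemma~\ref{lem:null} forces their diameters to tend to zero, so the iterated garbage terms tend to the identity in the uniform topology of $\Homeo(X)$. Turning this topological convergence into a genuine equality $a\in N$ is the subtle step, and will require either a finite telescoping identity combining commutators $[a,n],[a,n_1],\ldots,[a,n_k]$ -- with the $n_i\in N$ chosen so that the disjoint translates $n_iY\subseteq C$ witness an algebraic cancellation inside $N$ -- or else a supplementary argument exploiting the Polish topology of $\Homeo(X)$ described in Proposition~\ref{prop:topol}.
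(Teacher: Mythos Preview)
Your opening two paragraphs are correct and match the paper exactly: the double application of Lemma~\ref{lem:dm} to obtain dendro-minimality of $N$, and the reduction via $\Gb=\Gc$ to generators $a$ fixing pointwise a component $C$ of $X\setminus\{p\}$, are precisely how the paper begins.

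The gap is in the commutator step, and it is more serious than you indicate: the relation $[a,n]\in N$ holds for \emph{every} $n\in N$ simply because $a\in\Gb$ normalises $N$, since $[a,n]=(ana^{-1})n^{-1}\in N\cdot N=N$. No disjoint-support hypothesis is used for this conclusion, and the ``congruence'' $a\equiv nan^{-1}\pmod N$ is a tautology: in $\langle\Gb,N\rangle/N$ the element $n$ is already trivial, so both sides have the same image by definition. Iterating therefore produces no new information whatsoever, and neither a finite telescoping nor a Polish-topology argument can extract $a\in N$ from a vacuous starting point (and in any case $N$ is not assumed closed). The disjoint-support condition does tell you the explicit shape of $[a,n]$ as a homeomorphism, but you are not leveraging that; a Higman-style double commutator $[[a,n],b]$ would give $[a,b]\in N$ for $b$ supported in $Y$, but that still falls short of $a\in N$ without a perfectness statement that is unavailable here.

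What the paper does instead is substantially different. It chooses $n\in N$ admitting an \emph{austro-boreal arc} $I$ (such an element exists by Theorem~10.5 of~\cite{DM_dendrites} applied to the now dendro-minimal group $N$), positions $I$ inside the fixed component via Lemma~\ref{lem:dis+} so that the first-point projection of $p$ to $I$ is a branch point $b$ in the interior of $I$, and then uses the Tits-style patchwork of Lemma~\ref{lem:P} to build an element $h\in\Ga=\Gb$ by hand: $h$ acts as $n^kgn^{-k}$ on the fibre $X_{n^kb}(I)$ for each $k\geq 0$ and as the identity elsewhere. The translation dynamics of $n$ along $I$ makes the commutator $[h,n]$ telescope exactly to $g$ as a homeomorphism, not merely modulo $N$. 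Since $h\in\Gb$ normalises $N$, the identity $g=[h,n]=(hnh^{-1})n^{-1}$ then gives $g\in N$ directly. The missing ingredients in your proposal are thus the existence of austro-boreal elements in $N$ and the patchwork construction of $h$.
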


If $X$ has no free arc, we deduce:

\begin{cor}\label{cor:simple}
Let $X$ be any dendro-minimal dendrite without free arc.

Then $\Gb$ is a simple group.\qed
\end{cor}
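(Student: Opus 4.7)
The plan is to apply Theorem \ref{thm:simple} directly to the normal subgroups of $\Gb$ themselves, leveraging the density of $\Ends(X)$ in $X$ that follows from the absence of free arcs.

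Let $N \lhd \Gb$ be an arbitrary normal subgroup. Since $N$ is a subgroup of $G = \Homeo(X)$ normalised by $\Gb$, Theorem \ref{thm:simple} yields the dichotomy: either $N \supseteq \Gb$, or $N$ fixes $\Ends(X)$ pointwise. In the first case, combined with $N \subseteq \Gb$, we immediately obtain $N = \Gb$. In the second case, I invoke the standing fact from the Preliminaries that, for a dendrite without free arc, $\Ends(X)$ is dense in $X$. Each element of $N$ is then a homeomorphism of $X$ fixing a dense set pointwise, so by continuity it must be the identity on all of $X$. Hence $N = \{e\}$.

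This shows that the only normal subgroups of $\Gb$ are $\{e\}$ and $\Gb$ itself. To conclude that $\Gb$ is simple in the usual sense, one has only to note that $\Gb$ is non-trivial: the set $\Br(X)$ is non-empty (indeed dense, since $X$ has no free arc), and for any branch point $b$ the complement $X \setminus \{b\}$ has at least three components. Using Corollary \ref{cor:dis3} to find sufficiently small sub-dendrites inside one component, and the patchwork Lemma \ref{lem:patchwork} to modify the identity only on that component, one readily produces a non-trivial element fixing $b$ and hence in $\Gb$.

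The main point is really just the dichotomy of Theorem \ref{thm:simple}; there is no serious obstacle, as the absence of free arcs is exactly what is needed to convert ``fixes $\Ends(X)$ pointwise'' into ``is the identity''. The non-triviality of $\Gb$ is routine given the tools already developed.
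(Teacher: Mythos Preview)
Your argument is correct and is exactly the deduction the paper has in mind: apply the dichotomy of Theorem~\ref{thm:simple} to $N\lhd\Gb$ and use that $\Ends(X)$ is dense when $X$ has no free arc to kill the second alternative. The paper records this with a bare \qed.

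One small remark: your closing paragraph on the non-triviality of $\Gb$ is a bit loose as written---Corollary~\ref{cor:dis3} and Lemma~\ref{lem:patchwork} by themselves do not obviously manufacture a non-trivial homeomorphism supported in a single component, since you still need a non-trivial self-homeomorphism of that component to patch in. The paper does not spell this out either; if you want a clean justification, note that if $\Gb$ were trivial then (using Lemma~\ref{lem:fix:incl} and the end-point lemma at the close of Section~4) every non-trivial $g\in G$ would be an involution with a unique regular fixed point, forcing $G$ to be abelian with a global fixed point, contradicting dendro-minimality.
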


If on the contrary $X$ admits some free arc, then we appeal to Theorem~\ref{thm:semi:wreath} and deduce:

\begin{cor}\label{cor:simple:free}
Let $X$ be any dendro-minimal dendrite with a free arc but not reduced to this arc.

Then the image $\Gb\big/ \Fix(\Ends(X))$ of $\Gb$ in $\Aut(\Free(X))$ is a simple group.\qed
\end{cor}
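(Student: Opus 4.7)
The plan is to deduce simplicity directly from Theorem~\ref{thm:simple} via the correspondence theorem applied to the canonical projection $\pi\colon G \to \Aut(\Free(X)) = G/\Stab(\Free(X))$. Under the present hypotheses (dendro-minimal, has a free arc, not reduced to an arc), Corollary~\ref{cor:fix:stab} identifies the kernel: $\Stab(\Free(X)) = \Fix(\Ends(X))$. Consequently $H := \pi(\Gb)$ is canonically identified with $\Gb/(\Gb \cap \Fix(\Ends(X)))$, i.e.\ with the group named in the statement.

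Given an arbitrary normal subgroup $M \lhd H$, I form the preimage $N := \pi^{-1}(M) \leq G$. Two observations are immediate: $N$ contains $\Fix(\Ends(X)) = \ker \pi$, and $N$ is normalised by $\Gb$, because $\Gb \subseteq \pi^{-1}(H)$ and the latter normalises $\pi^{-1}(M) = N$. These are exactly the hypotheses of Theorem~\ref{thm:simple}, whose dichotomy then leaves only two possibilities. If $\Gb \subseteq N$, then $H \subseteq M$ and hence $M = H$. Otherwise $N$ fixes $\Ends(X)$ pointwise, i.e.\ $N \subseteq \Fix(\Ends(X))$, which combined with the reverse inclusion $\Fix(\Ends(X)) \subseteq N$ forces $N = \Fix(\Ends(X))$ and $M = \{1\}$. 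Thus $H$ has no proper non-trivial normal subgroups.

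The one remaining piece of bookkeeping I would include is the verification that $H$ itself is non-trivial, so that ``simple'' is not vacuous. Equivalently, one must exhibit a branch-point stabiliser acting non-trivially on $\Free(X)$: for a branch point $b$, Corollary~\ref{cor:dis3} places images of free arcs in several distinct components of $X \setminus \{b\}$, and the patchwork Lemma~\ref{lem:patchwork} then assembles a homeomorphism fixing $b$ that permutes some of these arcs, producing an element of $\Gb \setminus \Fix(\Ends(X))$. I expect this final non-triviality check to be the only step requiring any concrete construction; the rest is a clean correspondence-theorem translation of the dichotomy in Theorem~\ref{thm:simple}.
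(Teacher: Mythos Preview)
Your correspondence-theorem deduction from Theorem~\ref{thm:simple} is correct and is precisely the argument the paper has in mind; the corollary is marked with a bare \qed, relying on Corollary~\ref{cor:fix:stab} (equivalently Theorem~\ref{thm:semi:wreath}) for the identification of the kernel. One caution on your non-triviality sketch: Lemma~\ref{lem:patchwork} only glues together homeomorphisms each supported on a single member of a disjoint open family, extending by the identity elsewhere---it does not permute those members---so as written it will not produce an element carrying a free arc in one component of $X\setminus\{b\}$ into another; the non-triviality of $\Gb/\Fix(\Ends(X))$ does hold, but you should build the witness differently.
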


The proof of Theorem~\ref{thm:simple} uses an idea of Tits from~\cite{Tits70}, in the following setup. Let $I$ be an arc of a dendrite $X$. For any point $t$ of $I$, denote by $X_t(I)$ the preimage of $t$ under the first-point map $X\to I$. Thus, if $X_t(I)$ is not reduced to $\{t\}$, then $t$ is either a branch point or an extremity of $I$. For any space $Y$ and any $t\in Y$, we denote by $\Homeo_t(Y)$ the stabiliser of $t$ in $\Homeo(Y)$.

\begin{lem}\label{lem:P}
In the above notation, suppose we are given $h_t\in\Homeo_t(X_t(I))$ for each $t\in I$. Then there is $h\in\Fix(I)$ such that $h|_{X_t(I)}=h_t$ for all $t\in I$.
\end{lem}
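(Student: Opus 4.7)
The plan is to define $h$ fiber-by-fiber using the partition $X = \bigsqcup_{t \in I} X_t(I)$ induced by the first-point map $\pi \colon X \to I$, and then verify continuity. Setting $h(x) = h_t(x)$ whenever $x \in X_t(I)$ gives a well-defined bijection of $X$, since each $h_t$ bijects its fiber to itself and the fibers partition $X$; it fixes $I$ pointwise because $h_t(t) = t$, and by construction $h|_{X_t(I)} = h_t$ for all $t$. Since $X$ is compact Hausdorff, once $h$ is shown to be continuous it is automatically a homeomorphism, and then $h \in \Fix(I)$.

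Continuity at a point $x \notin I$ is immediate. Writing $t = \pi(x)$, I claim that $X_t(I) \setminus \{t\}$ is exactly the union of those components of $X \setminus \{t\}$ that do not meet $I \setminus \{t\}$; this is a union of open components, hence open in $X$ and an open neighborhood of $x$, and on it $h$ coincides with the continuous map $h_t$. The substantive case is continuity at $x \in I$, where $h(x) = x$. Here the key input is Lemma~\ref{lem:null}: the sub-dendrites $X_t(I)$ are pairwise disjoint and connected, so for any fixed compatible metric $d$ and any $\epsilon > 0$ only finitely many $t \in I$ can satisfy $\diam X_t(I) \geq \epsilon/2$; call this finite set $T_\epsilon$. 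Using continuity of $\pi$, continuity of $h_x$ at $x$, and finiteness of $T_\epsilon \setminus \{x\}$, I would choose a neighborhood $W$ of $x$ so small that $\pi(W) \subseteq B(x, \epsilon/2)$, that $W \cap X_s(I) = \varnothing$ for every $s \in T_\epsilon \setminus \{x\}$, and that $h_x(W \cap X_x(I)) \subseteq B(x, \epsilon)$. Then for $y \in W$: if $\pi(y) = x$ then $h(y) = h_x(y) \in B(x, \epsilon)$; if $\pi(y) \neq x$ then $\pi(y) \notin T_\epsilon$, so $d(h(y), \pi(y)) \leq \diam X_{\pi(y)}(I) < \epsilon/2$ and hence $d(h(y), x) < \epsilon$. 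Either way $h(W) \subseteq B(x, \epsilon)$, completing continuity.

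The one real obstacle is precisely this last step: a priori nothing prevents the fibers $X_{\pi(y)}(I)$ for $y$ arbitrarily close to $x \in I$ from containing points far from $x$, which would send $h(y)$ far from $h(x) = x$. The null-sequence property from Lemma~\ref{lem:null} is exactly what tames this, leaving only the finitely many large fibers $T_\epsilon \setminus \{x\}$ to be isolated by hand and the single fiber $X_x(I)$ at $x$ itself handled by the assumed continuity of $h_x$.
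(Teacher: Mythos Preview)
Your proof is correct and follows essentially the same route as the paper. The paper simply observes that each $X_t(I)\setminus\{t\}$ is the union of the components of $X\setminus\{t\}$ not meeting $I$ and then invokes Lemma~\ref{lem:patchwork}; you instead unroll the patchwork argument by hand in this specific setting, but the substantive input is the same null-sequence property of Lemma~\ref{lem:null}.
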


\begin{proof}
Whenever $X_t(I)$ is not reduced to $\{t\}$, it is the closure of the set $X_t(I)\setminus \{t\}$, which is the union of the components of $X\setminus \{t\}$ that do not meet $I$. Therefore, the lemma is a particular case of Lemma~\ref{lem:patchwork}.
\end{proof}

\begin{proof}[Proof of Theorem~\ref{thm:simple}]
Assume that $N$ does not fix $\Ends(X)$ pointwise. Two applications of Lemma~\ref{lem:dm} show that $N$ acts dendro-minimally on $X$ because $N$ is normalised by $\Gb$ which is normal is $G$. Since $\Gb=\Gc$, it suffices to show that every $g\in G$ fixing pointwise a component  $Y$ of $X\setminus\{x\}$ for some $x\in X$ belongs to $N$.

We know from Theorem~10.5 in~\cite{DM_dendrites} that $N$ contains an element $n$ admitting and austro-boreal arc $I=[y,z]$. By Lemma~\ref{lem:dis+}, we can assume, upon conjugating $n$ within $N$, that $I$ lies in $Y$ and that the image $b$ of $x$ under the first-point map to $I$ is some branch point in the interior of $I$.

The proof will be complete if we express $g$ as $g=[h,n]=hnh^{-1}n^{-1}$ for some element $h$ of $\Ga=\Gb$, since then $g=(hnh^{-1})n^{-1}\in N$.

This element is provided by Lemma~\ref{lem:P} as follows. Recall that the $n$-action on $I\setminus \{y,z\}$ is free by definition of austro-boreal arcs. We set $h_t$ to be the identity unless $t=n^k b$ for some $k\geq 0$. In that case, we set $h_{n^k b} = n^k g n^{-k}$, which defines indeed a homeomorphism of  $X_{n^k b}(I)$ fixing $n^k b$. Now the identity $g=[h,n]$ holds by construction, finishing the proof.
\end{proof}

We close this section with an observation about individual homeomorphisms fixing an end point.

\begin{lem}
Let $g$ be a homeomorphism of a dendrite $X$ not reduced to a point.

If $g$ fixes an end point, then $g$ fixes at least two points.
\end{lem}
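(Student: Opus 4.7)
The plan is to exploit that the fixed end point $e$ equips $X$ with the \emph{tree order} $x\leq y\iff x\in [e,y]$, with respect to which $g$ is an order-automorphism (as $g$ fixes $e$ and sends the unique arc $[e,y]$ to $[e,g(y)]$). This order has $e$ as global minimum, every down-set $[e,y]$ is a chain, meets $x\wedge y$ always exist (as the deepest common ancestor, i.e.\ the branch point where $[e,x]$ and $[e,y]$ separate), and the relation $\leq$ is a closed subset of $X\times X$. Crucially, because $e$ has order~$1$ in $X$, for any $p\neq e$ we have $p\wedge g(p)>e$: otherwise $[e,p]\cup[e,g(p)]$ would be the arc $[p,g(p)]$ with $e$ in its interior, forcing $e$ to separate $p$ from $g(p)$ in $X$ and hence to have order at least~$2$.

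Next I would introduce
\[
T=\{x\in X:x\leq g(x)\},\qquad T'=\{x\in X:g(x)\leq x\},
\]
both closed and containing $e$. For any chain $C\subseteq T$ the endpoint $s$ of the closed arc $\overline{\bigcup_{x\in C}[e,x]}$ is an upper bound of $C$, and $s\in T$ by closedness of $\leq$ applied in the limit to $x\leq g(x)$. By Zorn's lemma $T$ admits maximal elements, and any such $m$ is fixed: one has $m\leq g(m)$, and if $m<g(m)$ then order-preservingness of $g$ would put $g(m)$ in $T$ strictly above $m$, contradicting maximality. The same argument applied to $g^{-1}$ (for which $T'$ is the moving-up set) shows that maximal elements of $T'$ are also fixed by $g$. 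In particular, if some element of $T\cup T'$ lies strictly above $e$, then Zorn applied to the sub-poset above it yields a fixed point of $g$ different from $e$.

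To produce such an element, pick any $p\neq e$ and set $q=p\wedge g(p)$; we have $q>e$ by the end-of-$X$ observation. Since $q\leq p$, order-preservingness gives $g(q)\leq g(p)$; by the definition of the meet, also $q\leq g(p)$. Therefore $q$ and $g(q)$ both lie on the totally ordered arc $[e,g(p)]$ and so are comparable, placing $q$ in $T\cup T'$. The previous paragraph then delivers the desired second fixed point.

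The main conceptual point is the end-hypothesis, which enters exclusively through the inequality $p\wedge g(p)>e$: without it the meet could collapse to $e$ and the comparability step would produce no useful information. The main technical verification is that suprema of chains remain inside $T$ (respectively $T'$), which reduces to the closedness of the tree order in $X\times X$.
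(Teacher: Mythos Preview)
Your argument is correct and shares the paper's key insight: the point $q=p\wedge g(p)$ (the paper's $z$, defined as the branching point of $[e,p]$ from $[e,g(p)]$) lies strictly above $e$ because $e$ is an end, and $q$ is comparable to $g(q)$ since both sit on the chain $[e,g(p)]$. The only difference is in how you extract a fixed point from this comparable pair: the paper simply iterates, observing that $[e,z]\subseteq[e,gz]\subseteq[e,g^2z]\subseteq\cdots$ and that the closure of the union is an arc whose far endpoint is $g$-fixed, whereas you invoke Zorn's lemma on $T=\{x:x\leq g(x)\}$ to reach a maximal (hence fixed) element above $q$. These are equivalent packagings of the same monotone-limit idea; the paper's version is shorter and avoids the appeal to closedness of $\leq$ and to Zorn, while yours makes the order-theoretic structure more explicit.
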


\begin{proof}
Let $x$ be a $g$-fixed end point and let $y$ be any other point. We denote by $z\in [x,y]$ the point at which $[x,y]$ branches away from $[x, g y]$. Now $g z$ belongs to $[x, g y]$ and thus either $[x, g z]\se [x, z]$ or $[x, z]\se [x, g z]$. Upon replacing $g$ by $g\inv$ and $y$ by $g y$, we may assume that we are in the second case. Then the closure of  $\bigcup_{n\in\mathbf{N}}[x, g^n z]$ is a non-trivial arc whose end points are $g$-fixed.
\end{proof}

\section{Semi-linear orders}
The goal of this section is to relate dendrites to the classical theory of semi-linear orders. This relation being fruitful in both directions, we investigate it slightly beyond what is strictly needed for the results of the introduction. We begin by recalling some standard notions about orders:

Let $(T, \leq)$ be an ordered set. A subset is a \emph{chain} if it is linearly (=totally) ordered. One writes $\da x=\{y: y\leq x\}$ and $\ua x=\{y: y\geq x\}$. An order is \emph{dense} if for all $x,y$ with $x<y$ there is $t$ with $x<t<y$. More generally, a subset $T_0\se T$ is \emph{order-dense in $T$} if for all $x,y\in T$ with $x<y$ there is $t\in T_0$ with $x<t<y$. A subset $T_0\se T$ is \emph{coinitial in $T$} if for all $x\in T$ there is $y\in T_0$ with $y\leq x$.

\begin{defn}
An ordered set $(T, \leq)$ is \emph{(lower) semi-linear} if the following two axioms hold:

\smallskip

$T$ is downwards directed: $\forall x,y \ \exists s : s \leq x,y$;

for all $x\in T$, the set $\da x$ is a chain.

\smallskip
\noindent
Observe that any coinitial subset of a semi-linear order is still semi-linear.
\end{defn}

The definitions are readily checked in the following example:

\begin{example}\label{ex:sl:dendrite}
Let $X$ be a dendrite and $z\in X$ an end of $X$. Then the relation defined on $X$ by
$$x\leq y \Longleftrightarrow [z,x] \se [z,y]$$
is a dense semi-linear order.

More generally, any subset $T\se X$ that is arcwise dense (in the topological sense) is order-dense in $(X, \leq)$ and is a semi-linear order. (Although $T$ is not coinitial in $X$ if $z\notin T$, it is then coinitial in $X\setminus \{z\}$, which is still a semi-linear order.)
\end{example}

The main purpose of this section is to construct and study a functor reverting Example~\ref{ex:sl:dendrite}: to any semi-linear order $T$, we shall associate a ``completion'' $\cha T$ containing it. Under natural assumptions, $\cha T$ will be a dendrite inducing the given order on $T$. See Remark~\ref{rem:sl:compl} for the relation to known completions.

\begin{defn}
Let $(T, \leq)$ be a semi-linear order. We define the ordered set $(\cha T, \leq)$ to be the set of all \emph{full down-chains} of $T$ endowed with the inclusion order (induced from the set of all subsets of $T$). Here a chain $C\se T$ is a \emph{down-chain} if $\da x \se C$ for all $x\in C$. It is called \emph{full} if it contains its supremum or has no supremum in $T$.
\end{defn}

Recall that for a subset $S$ of an ordered set $T$, an element $s\in T$ is a \emph{supremum} of $S$ in $T$ if it is an upper bound for $S$ and if $s\leq s'$ for every upper bound $s'\in T$ for $S$. In particular, a supremum is unique if it exists; it is sometimes denoted by $\vee S$. We should however remember that the definition depends on $T$ since $S$ might not have a supremum anymore in a larger ordered set containing $T$. The infimum is defined dually.

\begin{rem}\label{rem:sl:min}
The empty set has a supremum in a semi-linear order $T$ if and only if $T$ admits a (necessarily unique) minimal element $t$. In that case, $\{t\}$ is a unique minimal element of $\cha T$. On the other hand, $T$ admits no minimum if and only if $\varnothing \in \cha T$. In that case, $\varnothing$ is a unique minimal element of $\cha T$.

Thus, in any case $\cha T$ admits a unique minimum; this will turn out to be our end $z$.
\end{rem}

The definitions imply readily the following.

\begin{lem}\label{lem:sl:embed}
Let $(T, \leq)$ be a semi-linear order. Then the map
$$T\longrightarrow \cha T, \kern3mm x\longmapsto \da x$$
is well-defined, injective, and realises $T$ as a subset of $\cha T$ with the same induced order.\qed
\end{lem}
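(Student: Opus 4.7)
The plan is to verify three things in turn: (i) for each $x\in T$, the subset $\da x$ genuinely lies in $\cha T$ (so the map is well-defined); (ii) the assignment is order-preserving and order-reflecting in the sense that $x\leq y$ iff $\da x\se \da y$; and (iii) injectivity, which will drop out of (ii).

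For (i), the second semi-linear axiom states that $\da x$ is a chain. It is moreover a down-chain, since $y\in \da x$ and $z\leq y$ give $z\leq x$ by transitivity. Finally, $\da x$ is full because it contains its own supremum, namely $x$ itself: $x\leq x$ shows $x\in \da x$, and $x$ is an upper bound of $\da x$ by the very definition of $\da x$, which makes it the supremum in $T$.

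For (ii), assume first $x\leq y$ in $T$; then for $z\in \da x$ we have $z\leq x\leq y$, so $z\in \da y$, proving $\da x\se \da y$. Conversely, if $\da x\se \da y$, then from $x\in\da x$ we get $x\in\da y$, i.e.\ $x\leq y$. Applying this equivalence twice yields (iii): if $\da x = \da y$ then $x\leq y$ and $y\leq x$, and by the antisymmetry of $\leq$ we conclude $x=y$.

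There is no substantive obstacle; the one small point to be attentive to is the definition of \emph{full}, but fullness of $\da x$ is automatic because the candidate supremum $x$ is already inside $\da x$, so the ``has no supremum in $T$'' clause is never invoked.
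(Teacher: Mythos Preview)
Your proof is correct and is exactly the direct verification from the definitions that the paper has in mind; the paper does not spell out any argument at all (it marks the lemma with \qed\ after noting ``the definitions imply readily the following''), so you have simply made explicit what the paper leaves to the reader.
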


\begin{lem}
Let $(T, \leq)$ be a semi-linear order. Then $(\cha T, \leq)$ is a semi-linear order too.
\end{lem}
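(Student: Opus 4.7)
My plan is to verify the two axioms of semi-linearity directly from the definitions, handling downward-directedness first and the chain-of-down-set property second.

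For downward-directedness, given $C, C' \in \cha T$, I would argue that if either is empty then it already lies below the other, so assume both non-empty and pick $x \in C$, $y \in C'$. By downward-directedness of $T$, choose $s \in T$ with $s \leq x, y$. Because $C$ is a down-chain containing $x$ and $\da x$ is a chain in $T$, we get $s \in \da x \se C$; similarly $s \in C'$. Now consider $D = \da s \se T$. It is a chain because $T$ is semi-linear, it is trivially a down-chain by transitivity, and it has $s$ as its supremum in $T$ with $s \in D$, so $D$ is full. Hence $D \in \cha T$, and since both $C$ and $C'$ contain $s$ and are down-chains, $D \se C$ and $D \se C'$. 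This gives the required lower bound in $\cha T$.

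For the second axiom, fix $C \in \cha T$ and take $D, D' \in \cha T$ with $D, D' \se C$; I want to show that $D$ and $D'$ are comparable under inclusion. Suppose $D \not\se D'$ and pick $x \in D \setminus D'$. For any $y \in D'$, both $x$ and $y$ lie in the chain $C$, so they are comparable. If $x < y$, then since $D'$ is a down-chain containing $y$ we would have $x \in \da y \se D'$, contradicting $x \notin D'$. Hence $y \leq x$, and since $D$ is a down-chain containing $x$ we obtain $y \in \da x \se D$. Thus $D' \se D$, as desired.

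The only subtle point is to avoid the temptation of taking $D := C \cap C'$ as the witness for downward-directedness: the intersection is a down-chain but its fullness in $T$ is not automatic, since if $C \cap C'$ has a supremum in $T$ that supremum need not lie in the intersection. Using the principal down-set $\da s$ instead sidesteps this issue because $\da s$ is full by construction. Note also that fullness of $D$ and $D'$ is not actually needed for the chain axiom — only the down-chain property is used there — so the argument is robust.
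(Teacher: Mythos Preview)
Your proof is correct. The treatment of the second axiom is essentially identical to the paper's: the paper simply asserts that ``the collection of down-chains in any chain is linearly ordered by inclusion,'' and your element-chasing argument is exactly the verification of this fact.

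For downward-directedness, however, the paper takes a shorter path. It invokes the earlier remark that $\cha T$ always has a unique minimum (namely $\varnothing$ if $T$ has no least element, or $\{t\}$ if $t$ is the least element of $T$), and of course any ordered set with a minimum is trivially downward-directed. Your construction of the principal down-set $\da s$ as a common lower bound works perfectly well, and your caution about the non-fullness of $C\cap C'$ is apt, but the minimum observation bypasses the issue entirely. What your argument buys is self-containment: it does not rely on the preceding remark, and it yields a lower bound that is ``close'' to both $C$ and $C'$ rather than the global minimum.
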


\begin{proof}
The ordered set $\cha T$ is downwards directed since it has a minimum by Remark~\ref{rem:sl:min}. For the other axiom, we pick $C\in \cha T$ and need to check that $\da C$ is a chain in $\cha T$. This holds because the collection of down-chains in any chain is linearly ordered by inclusion.
\end{proof}

Recall that an ordered set is \emph{$\vee$-complete} if every non-empty subset admits a supremum; $\wedge$-completeness is defined dually. An ordered set is \emph{Dedekind-complete} if any non-empty subset that is bounded above admits a supremum.

\begin{prop}\label{prop:sl:compl}
Let $(T, \leq)$ be a semi-linear order. Then $(\cha T, \leq)$ is $\wedge$-complete.

Moreover, any non-empty chain $\sC \se \cha T$ admits a supremum in $\cha T$. In particular, all maximal chains $\sC \se \cha T$ are Dedekind-complete.
\end{prop}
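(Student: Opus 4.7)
The plan is to realise infima as intersections and suprema as unions of down-chains, with a small adjustment to restore fullness when necessary.

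For $\wedge$-completeness, given a non-empty family $(C_i)_{i\in I}$ in $\cha T$, I would set $C=\bigcap_i C_i$ and argue it lies in $\cha T$. Being a chain follows since any two of its elements lie in some common $C_i$, and being a down-chain is immediate because $\da x\se C_i$ for every $i$ when $x\in C$. The nontrivial step is fullness: assuming $s=\sup C$ exists in $T$, I need $s\in C_i$ for every $i$, which I will establish by a dichotomy on $C_i$. Either some $y\in C_i$ satisfies $y\geq c$ for all $c\in C$, so $y$ is an upper bound for $C$, $s\leq y$, and down-closure gives $s\in\da y\se C_i$; or every element of $C_i$ is dominated by some element of $C$ (using that $C\se C_i$ is a chain), so $s$ is itself an upper bound for $C_i$, and since every upper bound of $C_i\supseteq C$ must be $\geq s$, one gets $s=\sup C_i$ in $T$, whence fullness of $C_i$ yields $s\in C_i$. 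The edge case where $C$ happens to be empty is handled by Remark~\ref{rem:sl:min}.

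For the supremum of a non-empty chain $\sC\se\cha T$, the candidate is $C=\bigcup_{D\in\sC}D$, which is a down-chain by the analogous direct argument. If $C$ is already full, it is the supremum. Otherwise let $s=\sup C\in T$ and set $C'=C\cup\{s\}$. To see $C'$ remains a down-chain, pick $y<s$: since $y$ cannot be an upper bound for $C$, there is $x\in C$ with $x\not\leq y$; then $x$ and $y$ both lie in the chain $\da s$ by semi-linearity of $T$, forcing $y<x$ and hence $y\in\da x\se C$. The same dichotomy as in the infimum argument then shows any upper bound $D'\in\cha T$ of $\sC$ must contain $s$, so $C'$ is the least upper bound.

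Dedekind-completeness of a maximal chain $\sC$ follows formally from the previous step: a non-empty bounded subset of $\sC$ is itself a chain, hence admits a supremum $C'$ in $\cha T$; and $C'$ is comparable to every $D\in\sC$ (either $D\leq E$ for some $E$ in the subset, giving $D\leq C'$, or $D$ dominates the entire subset, giving $D\geq C'$), so maximality of $\sC$ forces $C'\in\sC$. The main obstacle I anticipate is the fullness verification in the $\wedge$-completeness step, which promotes a purely order-theoretic property of $C$ (admitting a supremum in $T$) into set-membership $s\in C_i$ inside each individual chain. The dichotomy used there leverages semi-linearity in two quite different ways---downward-closure on one side, uniqueness of the supremum on the other---and that is what makes the argument go through.
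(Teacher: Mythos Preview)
Your proof is correct and follows essentially the same route as the paper: infima as intersections, suprema as unions (adjoining the supremum when needed), and Dedekind-completeness by checking the constructed supremum lies in the maximal chain. You are in fact more explicit than the paper on two points it asserts without justification: the dichotomy showing $\sup C\in C_i$ in the $\wedge$-completeness step, and the verification that $C\cup\{s\}$ is again a down-chain via semi-linearity of $\da s$.
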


\begin{proof}
Let $\sC \se \cha T$ be a non-empty subset. Then $E=\bigcap_{C\in \sC} C$ is a down-chain in $T$. If $E$ has a supremum $\vee E$ in $T$, the fact that each $C\in\sC$ is a full down-chain implies $\vee E\in C$ and thus $\vee E\in E$. Therefore $E\in\cha T$ and $E$ is an infimum for $\sC$ in $\cha T$.

Suppose that $\sC$ is moreover a chain. Consider $F=\bigcup_{C\in \sC} C$; it is a down-chain in $T$. If $F$ has no supremum in $T$, then $F\in \cha T$ and $F$ is a supremum for $\sC$ in $\cha T$. If on the other hand $F$ has a supremum $\vee F$ in $T$, let $F^*=F\cup \{\vee F\}$. Then $F^*\in \cha T$ and $F^*$ is an upper bound for $\sC$. We claim that $F^*$ is a supremum for $\sC$ in $\cha T$. Indeed, let $F'\in \cha T$ be an upper bound for $\sC$. We need to show $F^* \se F'$ and know already $F \se F'$. Since $F$ is not full, $F \neq F'$. Let thus $t\in F' \setminus F$. Then $t$ is an upper bound for $F$ since $F'$ is a chain and $F'$ a down-chain. Therefore $\vee F \leq t$ and since $F'$ is a down-chain we deduce $\vee F \in F'$ as desired.

Finally, for Dedekind-completeness, it suffices to observe that the supremum and infimum constructed in $T$ must automatically belong to any maximal chain containing $\sC$.
\end{proof}

In the converse direction, we have:

\begin{lem}\label{lem:sl:compl:conv}
Let $(T, \leq)$ be a semi-linear order. Suppose that every non-empty chain in $T$ has a supremum and an infimum in $T$. Then the morphism $T\to\cha T$ is an isomorphism.
\end{lem}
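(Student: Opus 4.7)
The plan is to leverage Lemma~\ref{lem:sl:embed}, which already gives that the canonical map $x\mapsto \da x$ is an order-preserving, order-reflecting injection from $T$ to $\cha T$. Hence everything reduces to verifying surjectivity: given an arbitrary full down-chain $C\in\cha T$, I must exhibit $x\in T$ with $C=\da x$.

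My first step is to rule out $C=\varnothing$, and this is where the infimum hypothesis actually plays a role. By Remark~\ref{rem:sl:min}, the empty chain belongs to $\cha T$ precisely when $T$ has no minimum, so it suffices to show that $T$ does have one. Pick any $y\in T$; the set $\da y$ is a non-empty chain by semi-linearity, so by hypothesis it admits an infimum $m\in T$. For any $x\in T$, downward directedness produces $s\in T$ with $s\leq x$ and $s\leq y$; then $s\in \da y$, so $m\leq s\leq x$. Thus $m$ is a minimum of $T$, so $\varnothing\notin\cha T$ and every element of $\cha T$ is a non-empty chain.

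Now for non-empty $C\in\cha T$, the supremum hypothesis yields $s=\vee C\in T$. Fullness of $C$ forces $s\in C$ (the alternative, that $C$ has no supremum in $T$, is excluded). Because $C$ is a down-chain, $s\in C$ gives $\da s\se C$; and because $s$ is an upper bound for $C$, we have $C\se \da s$. Therefore $C=\da s$, realising $C$ as the image of $s\in T$. Combined with Lemma~\ref{lem:sl:embed} this shows that $T\to\cha T$ is a bijective order-isomorphism.

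The only subtle point is the exclusion of the empty full down-chain: without the infimum assumption, $T$ need not possess a minimum and $\varnothing$ would be a genuine obstruction to surjectivity. The rest is a direct unwinding of the words \emph{full} and \emph{down-chain}, using the supremum assumption to produce the candidate $s$.
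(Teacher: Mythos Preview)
Your proof is correct and follows essentially the same route as the paper's: reduce to surjectivity via Lemma~\ref{lem:sl:embed}, use the infimum hypothesis to produce a minimum of $T$ (thereby excluding $\varnothing$ from $\cha T$ via Remark~\ref{rem:sl:min}), and then use the supremum hypothesis together with fullness to identify any non-empty $C\in\cha T$ with $\da{(\vee C)}$. The only cosmetic difference is that the paper obtains the minimum as the infimum of a maximal chain, whereas you obtain it as the infimum of $\da y$ for an arbitrary $y$; your version is marginally more elementary since it avoids invoking maximal chains.
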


\begin{proof}
We need to prove that any $C\in \cha T$ is of the form $C=\da x$ for some $x\in T$. We begin by noting that $C\neq \varnothing$, or equivalently by Remark~\ref{rem:sl:min}, that $T$ has a minimum. Indeed, the infimum of any maximal chain in $T$ is a minimum for $T$ since $T$ is downwards directed. Being non-empty, $C$ has a supremum $x$ in $T$. Being a full down-chain, $C$ coincides with $\da x$.
\end{proof}

\begin{rem}\label{rem:sl:compl:conv}
The above proof shows that Lemma~\ref{lem:sl:compl:conv} holds under the formally weaker assumption that every non-empty chain in $T$ has a supremum in $T$ and that $T$ has a minimum. 
\end{rem}

Taking into account Proposition~\ref{prop:sl:compl}, we deduce:

\begin{cor}\label{cor:sl:complete}
Let $(T, \leq)$ be a semi-linear order. Suppose that every non-empty chain in $T$ has a supremum in $T$ and that $T$ has a minimum. Then $(T, \leq)$ is $\wedge$-complete.\qed
\end{cor}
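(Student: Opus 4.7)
The plan is to combine the two immediately preceding results and transport $\wedge$-completeness from $\cha T$ back to $T$ through an isomorphism. By Proposition~\ref{prop:sl:compl}, the ordered set $\cha T$ is $\wedge$-complete for any semi-linear order $T$, so it suffices to show that under the hypotheses of the corollary, the canonical embedding $T \to \cha T$ of Lemma~\ref{lem:sl:embed} is an order-isomorphism.

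This is exactly the content of Remark~\ref{rem:sl:compl:conv}: the proof of Lemma~\ref{lem:sl:compl:conv} only uses that any full down-chain $C \in \cha T$ is non-empty and admits a supremum in $T$. Non-emptiness follows from the existence of a minimum in $T$ (via Remark~\ref{rem:sl:min}), and the existence of the supremum of $C$ in $T$ is precisely our hypothesis that every non-empty chain in $T$ has a supremum. Then $C = \da(\vee C)$ because $C$ is a full down-chain containing its own supremum.

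Hence $T \to \cha T$ is a bijection of ordered sets, and pulling back the $\wedge$-completeness of $\cha T$ yields the conclusion: given any non-empty subset $S \se T$, its image in $\cha T$ admits an infimum, which corresponds under the isomorphism to an infimum of $S$ in $T$.

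I do not expect any genuine obstacle here; the result is essentially a formal consequence of Proposition~\ref{prop:sl:compl} combined with Remark~\ref{rem:sl:compl:conv}. The only mild point to keep track of is that the hypothesis on suprema of chains (rather than of arbitrary non-empty subsets) is indeed enough to invoke the remark, since every full down-chain is in particular a chain.
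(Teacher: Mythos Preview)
Your proof is correct and follows exactly the route the paper intends: the corollary is stated with a \qed\ symbol immediately after noting ``Taking into account Proposition~\ref{prop:sl:compl}, we deduce'', so the argument is precisely to combine Proposition~\ref{prop:sl:compl} (that $\cha T$ is $\wedge$-complete) with Remark~\ref{rem:sl:compl:conv} (that $T\to\cha T$ is an isomorphism under these hypotheses). Your write-up makes explicit what the paper leaves implicit, and there is nothing to add.
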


Combining again Proposition~\ref{prop:sl:compl} with Lemma~\ref{lem:sl:compl:conv}, we also deduce that the operation $T\mapsto \cha T$ is idempotent.

\begin{cor}\label{cor:sl:idem}
For any semi-linear order $(T, \leq)$, the morphism $\cha T \to \cha{\cha T}$ is an isomorphism.\qed
\end{cor}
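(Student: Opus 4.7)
The plan is to invoke Lemma~\ref{lem:sl:compl:conv} (or its refinement in Remark~\ref{rem:sl:compl:conv}) with $\cha T$ in place of $T$, and to verify the hypotheses using Proposition~\ref{prop:sl:compl}. The corollary will then say exactly that the canonical morphism $\cha T\to \cha{\cha T}$ is an isomorphism.

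More precisely, I would proceed in three short steps. First, recall from Remark~\ref{rem:sl:min} that $\cha T$ always has a (unique) minimum, so the assumption ``$T$ has a minimum'' in Remark~\ref{rem:sl:compl:conv} is satisfied when applied to $\cha T$. Second, Proposition~\ref{prop:sl:compl} guarantees that every non-empty chain $\sC\se \cha T$ admits a supremum in $\cha T$; this is precisely the remaining hypothesis of Remark~\ref{rem:sl:compl:conv}. (If one prefers to apply the lemma itself rather than the remark, one additionally notes that $\cha T$ is $\wedge$-complete by Proposition~\ref{prop:sl:compl}, so every non-empty chain also has an infimum in $\cha T$.) Third, Lemma~\ref{lem:sl:compl:conv}, or Remark~\ref{rem:sl:compl:conv}, then tells us that the canonical map $\cha T\to \cha{\cha T}$ from Lemma~\ref{lem:sl:embed} is an isomorphism of ordered sets, which is what we want.

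There is essentially no obstacle here: the whole content of the corollary is that the construction $T\mapsto \cha T$ already achieves, in one step, a state of ``completeness'' strong enough to be a fixed point of the construction, and Proposition~\ref{prop:sl:compl} is exactly the statement that $\cha T$ enjoys this state. The only mild care needed is to notice that Lemma~\ref{lem:sl:compl:conv} requires infima of chains as well as suprema, while Proposition~\ref{prop:sl:compl} primarily produces infima for \emph{all} non-empty subsets and suprema for chains; these together give both halves at once, and Remark~\ref{rem:sl:compl:conv} shows that the suprema of chains and the minimum of $\cha T$ already suffice.
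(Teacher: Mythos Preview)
Your proposal is correct and follows exactly the paper's own approach: the corollary is deduced by combining Proposition~\ref{prop:sl:compl} (giving suprema of chains and $\wedge$-completeness in $\cha T$) with Lemma~\ref{lem:sl:compl:conv} (or Remark~\ref{rem:sl:compl:conv}) applied to $\cha T$.
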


\begin{rem}\label{rem:sl:compl}
The above results show that $\cha T$ is a completion of $T$ in some sense. It is larger than the Dedekind-type completion of~\cite[5.3]{Droste_mem}, \cite[1.3]{MaroliPHD} but smaller than the MacNeille completion~\cite{MacNeille}.
\end{rem}

\begin{example}
Consider the subset $T = [-1,0) \cup \{i, -i\}$ of the complex numbers endowed with the order given by the usual order on the real part (but $i$ incomparable to $-i$). This is a dense semi-linear order. Moreover, $\cha T$ can be identified with the closure $T\cup\{0\}$ with the order extended by $0\leq i$, $0\leq -i$. This semi-linear order is however not dense anymore.
\end{example}

The construction of $\cha T$ is functorial in the following sense.

\begin{lem}
Let $(T, \leq)$ be a semi-linear order and $T_0\se T$ a subset. Suppose that $(T_0, \leq)$ is semi-linear (e.g.\ it is coinitial in $T$). Then there is a canonical inclusion of ordered sets $\cha {T_0}\to \cha T$ making the diagram
$$\xymatrix{
T \ar[rr] && {\cha T}\\
T_0  \ar[u]\ar[rr] && {\cha{T_0}}\ar[u]
}$$
commutative.
\end{lem}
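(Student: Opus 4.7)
The plan is to send each $C_0 \in \cha{T_0}$ to an element $\tilde C_0 \in \cha T$ defined in two steps: first form the down-closure of $C_0$ inside $T$,
\[ C_0^{\star} := \{x \in T : x \leq y \text{ for some } y \in C_0\}, \]
then adjoin a $T$-supremum if one exists, i.e.\ set $\tilde C_0 := C_0^{\star} \cup \{s\}$ when $s$ is a supremum of $C_0^{\star}$ in $T$, and $\tilde C_0 := C_0^{\star}$ otherwise. The claim is that $C_0 \mapsto \tilde C_0$ is the desired canonical inclusion.

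First I would verify $\tilde C_0 \in \cha T$. The set $C_0^{\star}$ is a down-set by construction; it is a chain because any two of its elements are dominated by (the larger of) two comparable elements of $C_0$, and semi-linearity of $T$ ensures that the down-set in $T$ of that larger element is a chain. When $s$ is adjoined, I would verify that $\tilde C_0$ is still a down-chain by showing every $z < s$ in $T$ lies in $C_0^{\star}$: such $z$ sits in the chain $\{w \in T : w \leq s\}$, which contains $C_0^{\star}$; hence $z$ is comparable to every $x \in C_0^{\star}$, and $z \leq x$ places $z \in C_0^{\star}$ by the down-chain property, while $z > x$ for all $x \in C_0^{\star}$ would make $z$ a strict upper bound for $C_0^{\star}$, contradicting $s = \sup C_0^{\star}$. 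Fullness in $T$ then holds by construction.

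Next I would prove the identity $\tilde C_0 \cap T_0 = C_0$, which yields injectivity and order-reflection at one stroke. The inclusion $C_0 \se \tilde C_0 \cap T_0$ is trivial, and any $x \in T_0 \cap C_0^{\star}$ has $x \leq y$ for some $y \in C_0$, hence $x \in C_0$ by the down-chain property of $C_0$ in $T_0$; moreover an adjoined $s$ cannot lie in $T_0$, for otherwise $s$ would be a supremum of $C_0$ in $T_0$ and already in $C_0$ by fullness of $C_0$ there. For order-preservation, given $C_1 \subsetneq C_2$ in $\cha{T_0}$, any $y_0 \in C_2 \setminus C_1$ must lie above every $y \in C_1$ (it is comparable to $y$ because $C_2$ is a chain, and $y_0 \leq y$ is ruled out by the down-chain property of $C_1$ in $T_0$); hence $y_0 \in C_2^{\star}$ is an upper bound for $C_1^{\star}$ in $T$, forcing any adjoined $s_1$ to satisfy $s_1 \leq y_0 \in C_2^{\star}$ and thus lie in $\tilde C_2$. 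Commutativity of the diagram amounts to checking that the down-closure in $T$ of $\{y \in T_0 : y \leq x\}$ equals $\{y \in T : y \leq x\}$ for $x \in T_0$, which is immediate because $x \in T_0$ itself dominates every member of the latter set.

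The main obstacle I anticipate is the discrepancy between the two notions of fullness: $C_0$ is full in $T_0$, while $\tilde C_0$ must be full in $T$, where suprema are more abundant. The sup-adjoining step addresses this, and the subtle point is that a single adjunction suffices to restore a down-chain of $T$, which relies on the chain structure of the $T$-down-set of $s$ supplied by semi-linearity.
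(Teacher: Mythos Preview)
Your proof is correct. The approach is essentially the same as the paper's, but the presentation differs in a way worth noting.

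The paper first invokes the idempotency $\cha{\cha T}\cong\cha T$ (Corollary~\ref{cor:sl:idem}) together with Lemma~\ref{lem:sl:compl:conv} to reduce to the case where $T$ itself already has suprema and infima for all non-empty chains; it then simply sends $C\in\cha{T_0}$ to its supremum in $T$ and checks injectivity. Your construction unwinds this: forming $C_0^{\star}$ and then adjoining the $T$-supremum if it exists is precisely the recipe, from the proof of Proposition~\ref{prop:sl:compl}, for the supremum in $\cha T$ of the chain $\{\da_T y : y\in C_0\}$. So you are computing the same map, only you do it by hand inside $\cha T$ rather than first passing to a complete $T$.

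What each buys: the paper's route is shorter because the completeness reduction absorbs the case split on whether the supremum exists. Your route is more self-contained (it does not rely on Lemma~\ref{lem:sl:compl:conv} or Corollary~\ref{cor:sl:idem}) and makes the image explicit as a concrete full down-chain; the identity $\tilde C_0\cap T_0=C_0$ that you isolate is a clean way to get injectivity and order-reflection simultaneously, and it also makes the commutativity of the square immediate.
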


\begin{proof}
In view of Lemma~\ref{lem:sl:compl:conv} and Corollary~\ref{cor:sl:idem}, we can simplify the notation by assuming that $T$ admits a supremum and infimum for each of its non-empty chains; we then construct a suitable map $\cha {T_0}\to T$. Namely, we associate to $C\in \cha {T_0}$ its supremum in $T$.

This map is injective. Indeed, if $C, C'\in \cha {T_0}$ admit the same supremum $s$ in $T$, they are both subsets of the chain $\da s\cap T_0$. But full down-chains contained in a same chain in $T_0$ coincide when they have the same supremum in $T$. Finally, it is straightforward that the order induced from $T$ under this map coincides with the order on $T_0$.
\end{proof}

In the next statement, the isomorphism $T\cong \cha T$ is implicitly understood.

\begin{prop}\label{prop:sl:reconstruct}
Let $(T, \leq)$ be a semi-linear order such that every non-empty chain in $T$ has a supremum and an infimum in $T$. Let further $T_0\se T$ be a subset such that $(T_0, \leq)$ is semi-linear.

If $T_0$ is order-dense in $T$, then the canonical map $\cha{T_0}\to T$ is an isomorphism.
\end{prop}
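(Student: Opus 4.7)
The preceding lemma already identifies the canonical map as $C\mapsto \vee C$ (supremum taken in $T$) and establishes that it is injective and order-preserving, so the task reduces to surjectivity. Given $x\in T$, I would take as preimage the ``shadow'' $C_x := \da x\cap T_0$, and check (i) $C_x\in\cha{T_0}$ and (ii) that its supremum in $T$ equals $x$, treating the edge case $C_x=\varnothing$ separately at the end.

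For a non-empty $C_x$, I would verify (ii) first: $C_x$ is a chain (being contained in $\da x$), so by hypothesis its supremum $s$ exists in $T$ and clearly satisfies $s\leq x$. If $s<x$ strictly, order-density produces $t\in T_0$ with $s<t<x$, giving an element of $C_x$ above $s$ and contradicting $s$ being an upper bound. Hence $\vee C_x = x$. For (i), $C_x$ is patently a down-chain in $T_0$, so the only question is fullness. If $x\in T_0$, then $x=\max C_x\in C_x$. If $x\notin T_0$, I claim $C_x$ admits no supremum in $T_0$: any such supremum $s_0\in T_0$ would be an upper bound in $T$ as well, so $s_0\geq \vee C_x = x$; since $s_0\neq x$, in fact $s_0>x$, and then order-density produces $t\in T_0$ with $x<t<s_0$. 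Because $t>x$ dominates every element of $C_x$, this $t$ is an upper bound of $C_x$ in $T_0$ strictly below $s_0$, contradicting the definition of supremum.

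It remains to address the possibility $C_x=\varnothing$. In that case no element of $T_0$ lies at or below $x$; using order-density between any $y<x$ in $T$ and $x$, and then between $x$ and any $y>x$, one sees that this forces $x=\min T$, $x\notin T_0$, and $T_0$ itself to have no minimum. By Remark~\ref{rem:sl:min}, the empty chain $\varnothing$ then belongs to $\cha{T_0}$ as its minimum, and its image in $T$—the supremum of the empty subset—is $\min T = x$. The main obstacle in this argument is the verification of fullness when $x\notin T_0$: it is essential to establish (ii) before (i), since the ruling out of a supremum $s_0$ uses the equality $\vee C_x = x$; approaching the two clauses in the opposite order would produce a circular argument.
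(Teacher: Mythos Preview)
Your proof is correct and follows essentially the same approach as the paper: define the candidate preimage $C_x=\da x\cap T_0$, use order-density to show $\vee_T C_x=x$, then verify fullness by showing that any supremum of $C_x$ in $T_0$ must equal $x$. The only differences are expository: you split the fullness argument into the cases $x\in T_0$ and $x\notin T_0$ (the paper treats them uniformly, showing that a putative supremum $s\in T_0$ satisfies $s=x$ and hence lies in $C_x$), and you handle the edge case $C_x=\varnothing$ explicitly, whereas the paper absorbs it silently into the convention $\sup\varnothing=\min T$.
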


\begin{proof}
We know already that there is a canonical inclusion of semi-linear orders $\cha{T_0}\to T$ defined by mapping $C\in\cha{T_0}$ to its supremum in $T$. We need to prove that it is surjective. Choose thus some $t\in T$. We define $C=\da t \cap T_0$ and observe that the supremum of $C$ in $T$ is $t$ by order-density. It suffices therefore to show that $C$ is a full down-chain in $T_0$; the fact that it is a down-chain is immediate. Suppose therefore that $C$ admits a supremum $s$ in $T_0$. Since $s$ is an upper bound for $C$, it is also an upper bound for the chain $\da t$ by order-density. Thus $s\geq t$. Applying again order-density, we deduce $s=t$. It follows $s\in C$ as desired.
\end{proof}

Contrary to linear orders, partial orders admit a number of unrelated natural topologies, see e.g.~\cite[\S3]{Redfield}. We introduce a topology on an arbitrary semi-linear order $(T, \leq)$ as follows.

\begin{defn}
Let $(T, \leq)$ be a semi-linear order. We endow $T$ with the topology generated by all sets $T \setminus \ua x$ and $\ua x \setminus \{x\}$, where $x$ ranges over $T$.
\end{defn}

We recall that an ordered set is a \emph{$\wedge$-semi-lattice} if any pair $\{p,q\}$ of elements admits an infimum, denoted by $p\wedge q$. This is of course the case if the order is $\wedge$-complete.

We shall say that two points $x,y$ of a topological space are \emph{separated} by a point $t$ if the complement of $t$ can be partitioned into two open sets, each containing one of $x,y$.

\begin{lem}\label{lem:sl:T2}
Let $(T, \leq)$ be a dense semi-linear order which is a $\wedge$-semi-lattice. Then any two distinct points can be separated by a point; in particular, $T$ is Hausdorff.
\end{lem}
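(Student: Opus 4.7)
The plan is to produce, for any distinct $x, y \in T$, a single point $t \in T$ such that the two basic open sets $T \setminus \ua{t}$ and $\ua{t} \setminus \{t\}$ already partition $T\setminus\{t\}$ with $x$ and $y$ on opposite sides. These two sets are disjoint and their union is always $T\setminus\{t\}$, so the only real work is to arrange that exactly one of $x,y$ lies in $\ua{t}$; the Hausdorff conclusion is then immediate since $T\setminus\ua{t}$ and $\ua{t}\setminus\{t\}$ are already open neighbourhoods of $x$ and $y$ disjoint from each other.

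I would split into two cases according to whether $x$ and $y$ are comparable. If $x,y$ are comparable, say $x<y$, density directly yields $t$ with $x<t<y$; then $y\in \ua{t}\setminus\{t\}$ while $x\not\geq t$, placing $x$ in $T\setminus\ua{t}$. This case uses only density, not the $\wedge$-semi-lattice structure.

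The substantive case is when $x,y$ are incomparable. Here I would form $s=x\wedge y$, which exists by hypothesis, and observe that incomparability forces $s<x$ and $s<y$ strictly. Applying density on the chain $\da{x}$ between $s$ and $x$, I choose $t$ with $s<t<x$. The key computation — and essentially the only place where both density and the existence of infima are genuinely used — is to check that $t$ is incomparable to $y$: if $t\leq y$ then $t$ would be a lower bound of $\{x,y\}$ strictly above the infimum $s$, a contradiction; if $t\geq y$ then $y\leq t<x$, contradicting incomparability of $x,y$. Hence $y\not\geq t$, giving $y\in T\setminus\ua{t}$, while $x>t$ gives $x\in\ua{t}\setminus\{t\}$.

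The main obstacle (really the only delicate point) is this incomparable case: one must use the infimum $x\wedge y$ to locate a ``branch point'' below $x$ and $y$, then exploit density on a \emph{single} chain to push just above that infimum into $\da{x}$, and finally rule out both possible comparisons of the resulting $t$ with $y$. Once this is done, separation by a single point, and hence the Hausdorff property, follows immediately from the definition of the topology.
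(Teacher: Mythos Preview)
Your proof is correct and follows essentially the same approach as the paper: produce $t$ strictly between $x\wedge y$ and one of the two points by density, then verify that $y\notin\ua{t}$ because otherwise $t$ would lie below the infimum. The only cosmetic difference is that the paper avoids your case split by assuming without loss of generality that $x\nleq y$, which simultaneously covers both the comparable and incomparable situations.
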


\begin{proof}
Let $x,y$ be distinct points in $T$; by symmetry, we can assume $x\nleq y$ without loss of generality. Thus $x\wedge y < x$ and by order-density there is $t$ with $x\wedge y < t< x$. Notice that $t\nleq y$. Now $t$ separates $x$ and $y$ because $x\in \ua t \setminus \{t\}$ and $y\in T \setminus \ua t$.
\end{proof}

\begin{lem}\label{lem:sl:sc}
Let $(T, \leq)$ be a semi-linear order which is a $\wedge$-semi-lattice. If $T$ contains a countable order-dense subset, then $T$ is second countable as a topological space.
\end{lem}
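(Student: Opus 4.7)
The plan is to show that the given topology is generated already by the countable collection

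$$\mathcal{S}_0 \;=\; \{T\setminus \ua t : t\in T_0\}\;\cup\;\{\ua t \setminus \{t\} : t\in T_0\},$$

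where $T_0$ is the countable order-dense subset. Since $\mathcal{S}_0$ is countable, the collection of finite intersections is a countable base for the topology it generates, so it suffices to verify that every sub-basic open set coming from an arbitrary $x\in T$ is open in the topology generated by $\mathcal{S}_0$.

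I would split this into the two types of sub-basic sets. For a point $y \in T\setminus \ua x$, the $\wedge$-semi-lattice hypothesis ensures $x\wedge y$ exists, and $x\not\leq y$ forces $x\wedge y < x$. Order-density of $T_0$ then supplies $t\in T_0$ with $x\wedge y < t < x$. Since $t < x$ we have $\ua x \subseteq \ua t$, hence $T\setminus \ua t \subseteq T\setminus \ua x$; and $y\notin \ua t$, since $t\leq y$ would force $t\leq x\wedge y$, contradicting $t > x\wedge y$. Thus $y\in T\setminus \ua t \subseteq T\setminus \ua x$ with $T\setminus \ua t\in\mathcal{S}_0$.

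For a point $y \in \ua x\setminus\{x\}$, we have $x<y$, and order-density of $T_0$ produces $t\in T_0$ with $x<t<y$. Then $y\in \ua t\setminus\{t\}$, and $\ua t\setminus\{t\} \subseteq \ua x \setminus\{x\}$ because $t>x$ gives $\ua t\subseteq\ua x$ while $x\notin \ua t$. Hence $y$ has a neighbourhood in $\mathcal{S}_0$ contained in $\ua x\setminus\{x\}$.

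These two verifications show that the sub-base defining the topology on $T$ is contained in the topology generated by $\mathcal{S}_0$, and the converse inclusion is by definition. So $\mathcal{S}_0$ is a countable sub-base, finite intersections give a countable base, and $T$ is second countable. There is no real obstacle: the only subtlety is recognising that the $\wedge$-semi-lattice assumption is exactly what allows one to approximate a point $y\not\geq x$ from below inside $T_0$ via $x\wedge y$, which is needed because order-density of $T_0$ in $T$ only gives points strictly between two comparable elements of $T$.
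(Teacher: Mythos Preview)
Your proof is correct and follows essentially the same approach as the paper: both show that the sets $T\setminus \ua t$ and $\ua t\setminus\{t\}$ for $t\in T_0$ already form a sub-base, by handling the two types of sub-basic sets exactly as you do via $x\wedge y$ and order-density. Your write-up is slightly more detailed in justifying $y\notin \ua t$ and the inclusion $\ua t\setminus\{t\}\subseteq \ua x\setminus\{x\}$, but the argument is the same.
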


\begin{proof}
Let $T_0\se T$ be a countable order-dense subset. It suffices to verify that the sets $T \setminus \ua t$ and $\ua t \setminus \{t\}$ form a sub-base of the topology when $t$ ranges over $T_0$. Consider first a point $y$ of $T \setminus \ua x$ for an arbitrary $x\in T$. Then $x\wedge y < x$ and therefore there is $t\in T_0$ with $x\wedge y < t< x$. In particular, $y\in T \setminus \ua t$ and $T \setminus \ua t \se T \setminus \ua x$. Next, consider $y\in \ua x \setminus \{x\}$. There is now $t\in T_0$ with $x < t< y$ and $y\in \ua t \setminus \{t\} \se  \ua x \setminus \{x\}$.
\end{proof}

We shall use again the more precise fact established above that the sets $T \setminus \ua t$ and $\ua t \setminus \{t\}$ form a sub-base when $t$ ranges over $T_0$.

\begin{thm}
Let $(T, \leq)$ be semi-linear order admitting a countable order-dense subset. Suppose that every non-empty chain in $T$ has a supremum in $T$ and that $T$ has a minimum.

Then $T$ is a dendrite for the topology that we introduced.
\end{thm}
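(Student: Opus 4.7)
The plan is to verify the four defining properties of a dendrite (compactness, metrisability, connectedness, local connectedness) together with the absence of simple closed curves.

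First the easy observations. The hypotheses imply that $T$ is $\wedge$-complete by Corollary~\ref{cor:sl:complete}, and that $T$ is a dense order because it contains a countable order-dense subset $T_0$. Hence Lemma~\ref{lem:sl:T2} yields Hausdorffness and Lemma~\ref{lem:sl:sc} yields second countability. For metrisability via Urysohn's theorem, I need only verify regularity, which I would do by separating a point $y$ from a closed set $F\not\ni y$ using intermediate elements of $T_0$ inside a sub-basic open neighbourhood of $y$ disjoint from $F$.

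Next comes the arc structure. For $p, q \in T$, let $w = p \wedge q$ (which exists by $\wedge$-completeness) and set
\[ [p,q] = (\da p \cap \ua w) \cup (\da q \cap \ua w). \]
Each half is a chain that is Dedekind-complete (by $\wedge$-completeness together with the supremum hypothesis), dense and separable (inheriting countable order-density from $T_0$), with minimum $w$ and maximum $p$ or $q$; by the classical uniqueness theorem for such linear orders it is order-isomorphic, and in the order topology homeomorphic, to $[0,1]$ whenever $w < p$ (respectively $w < q$). A direct check shows that the subspace topology from $T$ agrees with the order topology on each half, so $[p,q]$ is a topological arc. This gives arcwise connectedness of $T$ through the minimum $z$. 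Local connectedness follows by exhibiting connected basic neighbourhoods of any $y \in T$ as unions of arcs $[y, r]$ for $r$ ranging over suitable elements of $T_0$. Absence of simple closed curves reduces to the claim that, for $p \neq q$ with $w = p \wedge q$ strictly below both, $w$ is a cut point of $T$ separating $p$ from $q$; I would prove this by partitioning $T \setminus \{w\}$ along the ``branch'' of $w$ containing $p$, both pieces being open from the chosen sub-basis.

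The main obstacle is compactness. Using second countability, it is equivalent to sequential compactness. Given a sequence $(x_n)$ in $T$, I would apply a diagonal argument over the countable $T_0$ to extract a subsequence $(x_{n_k})$ such that, for every $t \in T_0$, the predicate ``$t \leq x_{n_k}$'' either holds eventually or fails eventually. Set
\[ C = \{t \in T_0 : t \leq x_{n_k} \text{ eventually}\}; \]
by semi-linearity this is a chain in $T_0$, whose supremum $x^\star$ in $T$ exists by hypothesis. I would then verify that $x_{n_k} \to x^\star$ by checking the two sub-basic families separately, using order-density of $T_0$ to detect any element of the sequence which drops below $x^\star$ or escapes into a branch different from the one containing $x^\star$. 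The subtle point is precisely this branching: one must rule out the possibility that infinitely many $x_{n_k}$ lie above some $s>x^\star$ incomparable with the branch defined by $C$, which is done by choosing some $t \in T_0$ on that branch to witness a contradiction with the construction of $C$.
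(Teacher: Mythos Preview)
Your compactness argument is essentially the paper's: a diagonal extraction over the countable order-dense set $T_0$, followed by identifying the limit as the supremum of the chain of elements eventually below the subsequence. One small omission: your chain $C\subseteq T_0$ can be empty (for instance when the $x_{n_k}$ wander through infinitely many distinct branches just above the minimum $z$ and $z\notin T_0$); in that case one must set $x^\star=z$ and argue separately. The paper sidesteps this by taking $C$ inside $T$ rather than $T_0$, so that $z\in C$ unless $x_n=z$ infinitely often.

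The genuine difference lies in the endgame. Once compactness, Hausdorffness, second countability and connectedness are in hand, the paper simply quotes the characterisation of dendrites among continua as those in which any two points are separated by a third (Nadler~10.2); this separation was already established in Lemma~\ref{lem:sl:T2}. Your route---checking regularity, local connectedness, and the absence of simple closed curves by hand---is viable but heavier, and your local-connectedness sketch is too optimistic: sub-basic sets of the form $\ua t\setminus\{t\}$ are typically \emph{disconnected} (distinct branches above $t$ lie in distinct components), so basic open neighbourhoods need not be connected and cannot be written as unions of arcs $[y,r]$ through $y$. You would have to extract the arc-component of $y$ inside a basic neighbourhood and show it is open, which is doable but not immediate. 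Incidentally, regularity is redundant once compactness is proved, since compact Hausdorff spaces are normal.
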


\begin{proof}
By Remark~\ref{rem:sl:compl:conv}, $T\cong \cha T$. Therefore Proposition~\ref{prop:sl:compl} implies that $(T, \leq)$ is $\wedge$-complete. In particular, $T$ is Hausdorff and second countable by Lemma~\ref{lem:sl:T2} and Lemma~\ref{lem:sl:sc}.

Moreover, every maximal chain $\sC \se T$ is Dedekind-complete by Proposition~\ref{prop:sl:compl}; since moreover $\sC$ admits a countable order-dense subset and has a maximum and a minimum, it is order-isomorphic to a closed interval by Cantor's theorem~\cite[p.~511]{Cantor95}. Our topology is the usual order-topology when restricted to $\sC$ and it follows that every point of $T$ can be connected to the minimum by an arc.

We claim that $T$ is compact. Being second countable, it suffices to find an accumulation point for an arbitrary sequence $(x_n)$ in $T$. Suppose for a contradiction that there is no accumulation point and let $T_0\se T$ be a countable order-dense subset. Upon extracting a subsequence for each $t\in T_0$ in a diagonal process, we can assume that $(x_n)$ has the following property. For every $t\in T_0$ there is an integer $n_t$ such that one of the following holds: either $\forall n\geq n_t: x_n > t$, or $\forall n\geq n_t: x_n \ngeq t$.

Let $C\se T$ be the collection of all $x\in T$ such that $x_n > x$ holds for all but finitely many $n$; in particular $C$ contains all $t\in T_0$ satisfying the first case of the above alternative. We can assume $C\neq\varnothing$ because if $C$ does not contain the minimum $z$ of $T$ then $x_n=z$ for $n$ large enough. The set $C$ is a chain since any two of its elements belong to some $\da{x_n}$. Therefore, $C$ admits a supremum $s$ in $T$ and we proceed to show that $x_n$ converges to $s$ using the sub-base of neighbourhoods determined by $T_0$.

To this end, consider first $t\in T_0$ such that $s\in T \setminus \ua t$. If $x_n$ did not belong to this neighbourhood $T \setminus \ua t$ for a cofinal set of integers $n$, then $\forall n\geq n_t: x_n > t$ by the above alternative. This would imply $t\in C$ and thus $s\geq t$, which is absurd. Consider next $t\in T_0$ such that $s\in \ua t\setminus \{t\}$. Suppose again that $x_n$ is not almost always in $\ua t\setminus \{t\}$; this time, it follows that $\forall n\geq n_t: x_n \ngeq t$. On the other hand, $t$ is not an upper bound for $C$; since $\da s$ is a chain, this implies that there is $x\in C$ with $x>t$. For $n$ large enough, we have $x_n > x > t$, which is absurd.

This completes the proof that $T$ is compact.

At this point, we know in particular that $T$ is connected, compact, Hausdorff and second countable; thus $T$ is a continuum. For continua, one of the equivalent characterisation of dendrites is that any two distinct points of can be separated by a point, see~\cite[10.2]{Nadler}. In our case, this criterion is satisfied by Lemma~\ref{lem:sl:T2}.
\end{proof}

If we start with a dendrite $X$ and order as in Example~\ref{ex:sl:dendrite} by choosing $z\in \Ends(X)$, then the topology that we defined on this ordered set is the original dendrite topology by construction. It follows by naturality of the construction that every automorphism of the ordered set $(X, \leq)$ preserves this topology. Conversely, every homeomorphism of $X$ fixing $z$ preserves the order. We record this as follows.

\begin{prop}
There is a canonical identification $\Homeo_z(X) \cong \Aut(X, \leq)$.\qed
\end{prop}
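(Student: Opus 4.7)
The plan is to construct the identification as the identity on underlying set-theoretic bijections and to verify well-definedness in both directions, using the fact already noted in the text that the topology on $X$ is determined by the order in a natural way.

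First I would define a map $\Phi\colon \Homeo_z(X)\to\Aut(X,\leq)$ by sending a homeomorphism $g$ fixing $z$ to itself, viewed as a bijection of the set $X$. For well-definedness, I must check that $g$ preserves the order. This follows because $g$ is a homeomorphism fixing $z$, so it sends the unique arc $[z,x]$ onto $[z,g(x)]$; the relation $[z,x]\subseteq[z,y]$ is thus transported to $[z,g(x)]\subseteq[z,g(y)]$, which is exactly $g(x)\leq g(y)$. Applying the same argument to $g^{-1}$ shows that $\Phi(g)$ is an order automorphism.

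Next I would define $\Psi\colon\Aut(X,\leq)\to\Homeo_z(X)$, again as the identity on underlying bijections. Two points need verification. First, $\Psi(\varphi)$ fixes $z$: by the defining relation $[z,z]=\{z\}\subseteq[z,y]$ for every $y\in X$, the end $z$ is the unique minimum of $(X,\leq)$, and any order automorphism preserves the minimum. Second, $\Psi(\varphi)$ is continuous, i.e.\ a homeomorphism. This is precisely the remark preceding the proposition: by naturality, the topology on $X$ constructed from the order is preserved by any order automorphism. Concretely, a sub-base of the topology is given by the sets $X\setminus\ua x$ and $\ua x\setminus\{x\}$ for $x\in X$, and an order automorphism $\varphi$ sends these respectively to $X\setminus\ua{\varphi(x)}$ and $\ua{\varphi(x)}\setminus\{\varphi(x)\}$, which are again in the sub-base. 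Hence $\varphi$ and $\varphi^{-1}$ are both continuous.

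Finally, the maps $\Phi$ and $\Psi$ are mutually inverse, since both are the identity on the underlying set-function: $\Psi\circ\Phi(g)=g$ and $\Phi\circ\Psi(\varphi)=\varphi$. Thus $\Phi$ is a group isomorphism, giving the canonical identification $\Homeo_z(X)\cong\Aut(X,\leq)$. There is no real obstacle here: the content is packaged in the earlier observations that (i) the dendrite topology coincides with the order topology, and (ii) that topology is sub-basically described in order-theoretic terms, so automorphism groups on the two sides must agree.
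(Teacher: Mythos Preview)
Your proposal is correct and follows essentially the same approach as the paper: the paper's proof is the short paragraph immediately preceding the proposition (which is why it carries a \qed), observing that homeomorphisms fixing $z$ preserve the order while order automorphisms preserve the sub-basically defined topology, hence are homeomorphisms fixing the minimum $z$. You have simply spelled out these two directions in more detail.
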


We shall be more interested in certain countable subsets of $X$ viewed as ordered sets.

\begin{cor}\label{cor:sl:isom}
Let $X$ be a dendrite and $z\in\Ends(X)$. Let $T\se X$ be an arcwise dense subset that is invariant under the stabiliser $\Homeo_z(X)$ of $z$ and endow $T$ with the semi-linear order determined by $z$. Then the natural map
$$\Homeo_z(X) \lra \Aut(T, \leq)$$
is an isomorphism of groups.
\end{cor}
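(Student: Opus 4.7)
The strategy is to combine the preceding proposition, which identifies $\Homeo_z(X)\cong\Aut(X,\leq)$, with the order-theoretic reconstruction of $X$ from a dense subset afforded by Proposition~\ref{prop:sl:reconstruct}. Well-definedness and injectivity are nearly immediate: every $g\in\Homeo_z(X)$ preserves $\leq$ because it fixes $z$, and $T$ is $\Homeo_z(X)$-invariant by hypothesis, so restriction yields a group homomorphism $\Homeo_z(X)\to\Aut(T,\leq)$; it is injective because arcwise density of $T$ forces topological density in $X$, and continuous maps are determined on dense subsets.

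The content lies in surjectivity. I would first verify that $X$ satisfies the hypotheses of Proposition~\ref{prop:sl:reconstruct}: every non-empty chain $C\se (X,\leq)$ has an infimum (namely $z$) and a supremum. The supremum exists because $\{[z,x]:x\in C\}$ is a nested family of arcs whose union is totally ordered by $\leq$; since $\leq$ is a closed relation on $X$, the closure of this union is a totally ordered sub-dendrite, hence an arc, whose endpoint away from $z$ is $\sup C$. Combined with arcwise density of $T$, which is precisely order-density in the sense of Example~\ref{ex:sl:dendrite}, Proposition~\ref{prop:sl:reconstruct} provides an order isomorphism $\cha T\to X$ sending a full down-chain to its supremum in $X$.

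Given $\sigma\in\Aut(T,\leq)$, I would define $\hat\sigma\colon\cha T\to\cha T$ by $\hat\sigma(C)=\sigma(C)$. The verification that this lands in $\cha T$ has three parts: chains and down-chains are preserved because $\sigma$ and $\sigma\inv$ are order bijections, while full down-chains are preserved because an order automorphism of $T$ transports suprema in $T$ as well as the property of having no supremum in $T$. Transporting $\hat\sigma$ through $\cha T\cong X$ produces an order automorphism $\tilde\sigma\in\Aut(X,\leq)$, hence an element of $\Homeo_z(X)$ by the preceding proposition. For $t\in T$, the point $t\in X$ corresponds to $\da t\cap T\in\cha T$ under the isomorphism, so
$$\tilde\sigma(t)\;=\;\sup\nolimits_X\,\sigma(\da t\cap T)\;=\;\sigma(t),$$
because $\sigma(t)$ is itself both an element of and an upper bound for $\sigma(\da t\cap T)$. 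Thus $\tilde\sigma|_T=\sigma$, completing surjectivity.

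The main obstacle I anticipate is notational bookkeeping: distinguishing when suprema are computed in $T$ versus in $X$ while verifying the fullness condition, and keeping track of the identifications along the chain $T\hookrightarrow\cha T\cong X$. The order-theoretic content is otherwise routine once Proposition~\ref{prop:sl:reconstruct} is in hand.
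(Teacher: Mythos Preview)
Your proposal is correct and follows essentially the same approach as the paper: injectivity from density of $T$, and surjectivity by extending $\sigma\in\Aut(T,\leq)$ functorially to $\cha T$ and transporting through the identification $\cha T\cong X$ of Proposition~\ref{prop:sl:reconstruct}. You spell out in detail what the paper compresses into the phrase ``by naturality of the construction of $\cha T$ and of the topology'', and you route through $\Aut(X,\leq)$ and the preceding proposition rather than invoking the topology on $\cha T$ directly, but the strategy is the same.
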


\begin{proof}
The above map is a group homomorphism which is injective because $T$ is dense in $X$; we need to prove that it is onto. By naturality of the construction of $\cha T$ and of the topology, every automorphism of $T$ extends to a homeomorphism of $\cha T$. By Proposition~\ref{prop:sl:reconstruct}, we have a canonical homeomorphism $\cha T \cong X$ and the statement follows.
\end{proof}

\section{Generalized Wa\.zewski dendrites}
This section investigates homogeneity properties of the generalised Wa\.zewski dendrites $D_S$. We start with a general definition.

\medskip
To any finite subset $F$ of a dendrite $X$ we associate a finite vertex-labelled simplicial tree $\langle F \rangle$ as follows. The sub-dendrite $[F]$ is a finite tree in the topological sense, i.e.\ the topological realisation of a finite simplicial tree. Such a simplicial tree is not unique because degree-two vertices can be added or removed without changing the topological realisation. We choose for $\langle F \rangle$ to retain precisely one degree-two vertex for each element of $F$ which is a regular point of the dendrite $[F]$. Thus, $\langle F \rangle$ is a tree whose vertex set contains $F$. Finally, we label the vertices of $\langle F\rangle$ by assigning to each vertex its order in $X$.

We observe that this is a labelling by elements of $\NN \cup \{\infty\}$ and that the labelings that can arise in this way are precisely those which are bounded below by the degree of $x$ in $\langle F \rangle$, i.e.\ by the order of $x$ in $[F]$.

\begin{prop}\label{prop:extension}
Fix $S\se \{3, 4, \ldots, \infty\}$ non-empty. Given two finite subsets $F, F'\se D_S$, any isomorphism of labelled graphs $\langle F \rangle \to \langle F' \rangle$ can be extended to a homeomorphism of $D_S$.
\end{prop}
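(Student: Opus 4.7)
The plan is a three-stage construction: first I define the extending homeomorphism $h$ on the finite sub-dendrite $[F]$, then extend it component-by-component across $D_S\setminus[F]$, and finally verify continuity with a null-sequence argument in the style of Lemma~\ref{lem:patchwork}. The main technical obstacle, discussed in stage two, is to show that $\Homeo(D_S)$ acts transitively on $\Ends(D_S)$; I would handle this by a back-and-forth construction.

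\emph{Stage one.} Since $[F]$ is the topological realization of the finite tree $\langle F\rangle$ and $[F']$ realizes $\langle F'\rangle$, the labeled graph isomorphism $\phi$ induces a canonical homeomorphism $h_0\colon[F]\to[F']$ sending vertices to their $\phi$-images and parameterizing edges linearly. Now decompose $D_S\setminus[F]$ into its connected components. For each component $C$, the set $\overline{C}\setminus C$ is a single point $x(C)\in F$, and $x(C)$ is an end of $\overline{C}$ because $\overline{C}\setminus\{x(C)\}=C$ is connected; by Lemma~\ref{lem:sub:DS}, $\overline{C}\cong D_S$. A direct count gives that the number of components attached at $x\in F$ is $\mathrm{ord}_{D_S}(x)-\deg_{\langle F\rangle}(x)$: the $\mathrm{ord}_{D_S}(x)$ components of $D_S\setminus\{x\}$ split into the $\deg_{\langle F\rangle}(x)$ that meet $[F]\setminus\{x\}$ (one per edge at $x$) and a remainder which is exactly the set of components of $D_S\setminus[F]$ attached at $x$. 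Since $\phi$ preserves labels and degrees, I choose for every $x\in F$ a bijection between the components attached at $x$ and those attached at $\phi(x)$.

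\emph{Stage two.} For each matched pair $(C,C')$ I need a homeomorphism $h_C\colon\overline{C}\to\overline{C'}$ sending $x(C)$ to $\phi(x(C))=x(C')$. Both $\overline{C}$ and $\overline{C'}$ are homeomorphic to $D_S$ with these points as ends, so $h_C$ exists as soon as $\Homeo(D_S)$ acts transitively on $\Ends(D_S)$. I would establish this by a back-and-forth: given two ends $z,z'\in D_S$, inductively build finite subsets $z\in F_n\subseteq D_S$ and $z'\in F'_n\subseteq D_S$ with labeled graph isomorphisms $\phi_n\colon\langle F_n\rangle\to\langle F'_n\rangle$, arranging that $\bigcup F_n$ and $\bigcup F'_n$ are each dense. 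The one-step extension uses exactly the defining property of $D_S$: whenever a new point $y$ of prescribed order is added on one side, a matching $y'$ of the same order can be found in the required component of $D_S\setminus[F'_n]$ because $\Br_n(D_S)$ is arcwise dense for every $n\in S$ and $\Ends(D_S)$, $\Reg(D_S)$ are dense in $D_S$. The neighborhood basis of Lemma~\ref{lem:unif}, applied to the arcwise dense sets $\bigcup F_n$ and $\bigcup F'_n$, then packages the compatible family $(\phi_n)$ into a limiting homeomorphism of $D_S$ sending $z$ to $z'$.

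\emph{Stage three.} Define $h\colon D_S\to D_S$ piecewise by $h|_{[F]}=h_0$ and $h|_{\overline{C}}=h_C$ for each component $C$; the two definitions agree at the single boundary point $x(C)$, so $h$ is a well-defined bijection, and likewise for $h^{-1}$. Continuity proceeds as in the proof of Lemma~\ref{lem:patchwork}: if $x_n\to x$ has infinitely many terms lying in distinct components $C_n$ of $D_S\setminus[F]$, then $(\overline{C_n})$ is a null sequence by Lemma~\ref{lem:null}, so the finiteness of $F$ forces the attachment points $x(C_n)$ to be eventually constant equal to some $x_0\in F$, which must coincide with $x$. The matched components $(\overline{C'_n})$ are pairwise disjoint, hence also a null sequence, all attached at $\phi(x_0)=h(x)$, so $h(x_n)\to h(x)$. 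Since $D_S$ is compact Hausdorff, $h$ is the desired homeomorphism of $D_S$ extending $\phi$.
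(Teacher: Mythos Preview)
There is a genuine gap in Stage one. You claim that each component $C$ of $D_S\setminus[F]$ has its attachment point $x(C)$ lying in $F$ (or at worst in the vertex set of $\langle F\rangle$). This is false: since $\Br_n(D_S)$ is arcwise dense for every $n\in S$, there are branch points of $D_S$ throughout the \emph{interior} of every edge of $[F]$, and each such branch point $b$ carries $\mathrm{ord}_{D_S}(b)-2\geq 1$ components of $D_S\setminus[F]$. So the components you must match are not indexed by the finite set of vertices but by a countable dense set of attachment points along each edge.

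This error cascades. Your $h_0$ is defined by ``parameterizing edges linearly'', which has no reason to send a point of order $n$ to a point of order $n$; hence the number of components attached at $p$ need not equal the number attached at $h_0(p)$, and the bijection of components in Stage one cannot be chosen. Likewise the Stage-three continuity argument (``finiteness of $F$ forces the attachment points $x(C_n)$ to be eventually constant'') collapses once the attachment set is infinite.

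The paper's proof avoids exactly this problem by cutting $D_S$ only at the \emph{vertices} of $\langle F\rangle$: for each edge $\{x,y\}$ it takes the piece $D_S(x,y)$, namely the closure of the component of $D_S\setminus\{x,y\}$ containing the interior of $[x,y]$, which absorbs all the hair hanging off that edge into a single sub-dendrite homeomorphic to $D_S$ with $x,y$ as two marked ends. Matching these pieces then requires two-point end-transitivity (any pair of ends goes to any other pair), which the paper imports from Charatonik--Dilks rather than redoing a back-and-forth. Your back-and-forth idea is sound in spirit---it would even prove the full proposition directly---but as written it is invoked only for one-point transitivity and plugged into a decomposition that does not work.
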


\begin{proof}
To produce a homeomorphism $h$ of an arbitrary topological space $X$, it suffices to cover $X$ by a finite family $\sY$ of closed subsets and to specify for each $Y\in \sY$ a homeomorphism $h_Y\colon Y \cong h_Y(Y) \se X$ such that $\{h_Y(Y)\}_{Y\in\sY}$ is also a cover by closed sets and with the compatibility conditions
$$h_{Y}|_{Y \cap Z} = h_{Z}|_{Y \cap Z}, \kern 3mm h_Y (Y \cap Z) =  h_Y (Y) \cap h_{Z}(Z)$$
for all $Y, Z\in \sY$. In the case at hand, we use $\langle F\rangle$ to decompose $D_S$ into a family $\sY$ of sub-dendrites, as follows. First, for any adjacent vertices $x,y$ of $\langle F\rangle$, we define $D_S(x,y)$ as the closure of the component of $D_S \setminus\{x,y\}$ containing the interior of the arc $[x,y]$. Next, for each individual vertex $x$ of $\langle F\rangle$, we let $Y_x$ be the union of $\{x\}$ and of all components of $D_S \setminus\{x\}$ that do not meet any of the $D_S(x,y)$ where $y$ ranges over the vertices adjacent to $x$. We take for $\sY$ the collection of all these $D_S(x,y)$ and $Y_x$.

We define likewise the decomposition $\sY'$ associated to $\langle F'\rangle$ and observe that any isomorphism $\langle F \rangle \to \langle F' \rangle$ of labelled graphs induces a bijection $\sH\colon \sY \to \sY'$. Moreover, the intersection of two distinct elements of $\sY$ is either empty or reduces to a single point (which is a vertex of $\langle F\rangle$). Therefore, it suffices to exhibit for each $Y\in\sY$ a homeomorphism $h_Y\colon Y \to \sH(Y)$ which has the prescribed behaviour on the intersection of $Y$ and the vertex set of $\langle F\rangle$.

At this point, we can conclude by the well-known homogeneity properties of $D_S$ as follows. All $D_S(x,y)$ are homeomorphic to $D_S$ itself, and the homeomorphism can be chosen to send $x$ and $y$ to any given pair of distinct ends of $D_S$. This follows from Theorem~6.2 in~\cite{Charatonik-Dilks}, compare also Corollary~3.3 in~\cite{Charatonik95}. As for $Y_x$, it can be written as the union of $D_S(x,x')$ where we choose some $x'\in\Ends(D_S)$ in each component of $D_S \setminus\{x\}$ meeting $Y_x$. The (possibly infinite) number of these components is the difference between the orders of $x$ in $D_S$ and in $[F]$, and therefore it is the same number as for $\sH(Y_x)$. Hence the required homeomorphism $Y_x \cong \sH(Y_x)$ is obtained by patching together the various homeomorphisms on each $D_S(x,x')$. The possibility of infinitely many such components does not raise any continuity issue since they will then form a null-sequence, just as in the proof of Lemma~\ref{lem:patchwork}.
\end{proof}

\begin{rem} The collection of finite subsets of $\Br(D_S)$ is a Fraïssé class. This will be made explicit in a forthcoming work.\end{rem}
The following very special case of Proposition~\ref{prop:extension} in fact hardly different from the ingredient from~\cite{Charatonik-Dilks} that we used in the proof, but we isolate it for further reference:

\begin{cor}\label{cor:DS:2-trans}
For any $S\se \{3, 4, \ldots, \infty\}$ and any $n\in S$, the action of $\Homeo(D_S)$ on $\Br_n(D_S)$, on $\Ends(X)$ and on $\Reg(X)$ is doubly transitive.\qed
\end{cor}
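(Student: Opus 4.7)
The plan is to derive this immediately from Proposition~\ref{prop:extension} by checking, in each of the three cases, that an arbitrary pair of distinct points can be matched to any other by a labelled-graph isomorphism of the associated two-vertex trees.

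Fix $n\in S$ and consider two pairs $(x_1,x_2)$ and $(y_1,y_2)$ of distinct points, both lying in $\Br_n(D_S)$, or both in $\Ends(D_S)$, or both in $\Reg(D_S)$. Set $F=\{x_1,x_2\}$ and $F'=\{y_1,y_2\}$. Since $F$ has just two elements, $[F]$ is the single arc $[x_1,x_2]$; the points $x_1,x_2$ are its topological endpoints, so neither is a regular point of $[F]$ and no extra degree-two vertex is added. Hence $\langle F\rangle$ is the one-edge labelled graph with vertices $x_1,x_2$, each carrying as label its order in $D_S$: namely $n$ in the branch case, $1$ in the ends case, and $2$ in the regular case. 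The same description applies to $\langle F'\rangle$, with the same two labels since $y_1,y_2$ are drawn from the same stratum as $x_1,x_2$.

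Consequently, the bijection $x_i\mapsto y_i$ defines an isomorphism $\langle F\rangle\to \langle F'\rangle$ of vertex-labelled graphs. By Proposition~\ref{prop:extension} this isomorphism extends to a homeomorphism $h\in\Homeo(D_S)$, and $h$ sends the ordered pair $(x_1,x_2)$ to $(y_1,y_2)$. This establishes double transitivity in each of the three cases.

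There is no real obstacle: the only mild thing to verify is that the vertex set of $\langle F\rangle$ is exactly $\{x_1,x_2\}$ with the single connecting edge (no phantom degree-two vertices), which is automatic because $F$ consists precisely of the endpoints of $[F]$.
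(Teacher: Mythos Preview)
Your argument is correct and is precisely the intended one: the paper marks this corollary with a \qed\ and introduces it as ``a very special case of Proposition~\ref{prop:extension}'', which is exactly what you spell out by observing that the labelled tree $\langle\{x_1,x_2\}\rangle$ is a single edge whose endpoint labels depend only on the stratum $\Br_n(D_S)$, $\Ends(D_S)$ or $\Reg(D_S)$.
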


An elementary consequence of double transitivity is the following.

\begin{cor}\label{cor:prod2}
Let $S\se \{3, 4, \ldots, \infty\}$ be a non-empty set and $n\in S\cup\{1,2\}$. Then any element of $\Homeo(D_S)$ is the product of two elements fixing each some point of order $n$ in $D_S$.
\end{cor}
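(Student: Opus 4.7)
The plan is to reduce the statement directly to the double transitivity provided by Corollary~\ref{cor:DS:2-trans}. Write $P_n$ for the set of points of order $n$ in $D_S$; so $P_n = \Br_n(D_S)$ if $n\in S$, $P_n = \Ends(D_S)$ if $n=1$, and $P_n = \Reg(D_S)$ if $n=2$. In all three cases, Corollary~\ref{cor:DS:2-trans} asserts that $\Homeo(D_S)$ acts doubly transitively on $P_n$, and in all three cases $P_n$ contains at least three (indeed, uncountably many, or at least a dense countable set of branch points) elements, which is all we will need.

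Given an arbitrary $g\in\Homeo(D_S)$, I would first choose some $p_2\in P_n$ and then pick $p_1\in P_n$ distinct from both $p_2$ and $g(p_2)$; this is possible since $P_n$ has at least three elements. Note that $g(p_2)\in P_n$ since $g$ preserves the Menger--Urysohn order. By double transitivity applied to the pairs $(p_1,p_2)$ and $(p_1,g(p_2))$ of distinct elements of $P_n$, there exists $g_1\in\Homeo(D_S)$ with $g_1(p_1)=p_1$ and $g_1(p_2)=g(p_2)$.

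Set $g_2 := g_1^{-1}g$. Then $g_2(p_2) = g_1^{-1}(g(p_2)) = g_1^{-1}(g_1(p_2)) = p_2$, so $g_2$ fixes the point $p_2 \in P_n$, while $g_1$ fixes the point $p_1 \in P_n$ by construction. The factorisation $g = g_1 g_2$ is then the required one.

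There is no substantive obstacle here; the only point requiring minor attention is to ensure $P_n$ is large enough to choose $p_1$ away from $\{p_2,g(p_2)\}$, but this is immediate from the descriptions of $\Br_n(D_S)$, $\Ends(D_S)$ and $\Reg(D_S)$ recalled in Section~2 and at the end of Section~2.
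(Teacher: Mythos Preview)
Your proof is correct and is precisely the standard argument the paper has in mind: the paper does not spell out a proof at all, merely introducing the corollary as ``an elementary consequence of double transitivity,'' and your factorisation via $g_1$ fixing $p_1$ and $g_2=g_1^{-1}g$ fixing $p_2$ is exactly that elementary consequence.
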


This clarifies the simplicity statement of Corollary~\ref{cor:simple} above:

\begin{cor}\label{cor:DS:simple}
$\Homeo(D_S)$ is a simple group for any non-empty $S\se \{3, 4, \ldots, \infty\}$.
\end{cor}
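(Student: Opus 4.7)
The plan is to derive this as a direct combination of two earlier results: Corollary~\ref{cor:simple}, which says that $\Gb$ is simple whenever $X$ is dendro-minimal without free arc, and Corollary~\ref{cor:prod2}, which shows that every homeomorphism of $D_S$ factors as a product of two stabilisers of points of a chosen order. The first ingredient will give simplicity of the subgroup $\Gb$ of $\Homeo(D_S)$, and the second will identify $\Gb$ with the full group $\Homeo(D_S)$.

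First I would verify the two hypotheses needed to apply Corollary~\ref{cor:simple} to $X=D_S$. Since $D_S$ is, by definition, characterised by the property that for every $n\in S$ every arc meets $\Br_n(D_S)$, no arc in $D_S$ can be free; equivalently $\Br(D_S)$ is arcwise dense, hence dense by local arcwise connectedness of dendrites. For dendro-minimality, suppose $Y\subsetneq D_S$ is a proper closed connected $\Homeo(D_S)$-invariant subset. If $Y$ contained more than one point it would contain an arc, and picking any $n\in S$ that arc would contain a point of $\Br_n(D_S)$; double transitivity on $\Br_n(D_S)$ from Corollary~\ref{cor:DS:2-trans} would force $Y\supseteq \Br_n(D_S)$, and since $\Br_n(D_S)$ is arcwise dense (hence dense), $Y$ would be all of $D_S$. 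If on the other hand $Y$ reduces to a single point, that point would be globally fixed, contradicting transitivity on ends.

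With these two conditions verified, Corollary~\ref{cor:simple} yields at once that $\Gb$ is a simple group. It remains to identify $\Gb$ with $\Homeo(D_S)$. Here I would pick any $n\in S$ (which exists as $S\neq\varnothing$). Every point of order $n$ in $D_S$ is a branch point, so its $\Homeo(D_S)$-stabiliser is one of the generators of $\Gb$. Corollary~\ref{cor:prod2} then says that an arbitrary element of $\Homeo(D_S)$ is a product of two such stabilisers, and hence already lies in $\Gb$. Therefore $\Homeo(D_S)=\Gb$, and simplicity of the latter gives simplicity of the former.

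There is essentially no real obstacle in this plan; the only step requiring a modest verification is dendro-minimality of $D_S$, and the rest is a packaging of results proved earlier in the paper. The conclusion is then immediate.
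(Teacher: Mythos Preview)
Your proof is correct and follows exactly the paper's approach: apply Corollary~\ref{cor:simple} to conclude that $\Gb$ is simple, then use Corollary~\ref{cor:prod2} (with any $n\in S$) to obtain $\Homeo(D_S)=\Gb$. The only difference is that you explicitly verify the hypotheses of Corollary~\ref{cor:simple} (no free arc and dendro-minimality of $D_S$), which the paper leaves implicit; one small imprecision is that ``contradicting transitivity on ends'' only directly rules out a fixed end---to exclude a fixed branch or regular point you should likewise invoke transitivity on $\Br_n(D_S)$ or $\Reg(D_S)$ from Corollary~\ref{cor:DS:2-trans}.
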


\begin{proof}
By Corollary~\ref{cor:simple}, $\Gb$ is a simple group, where $G=\Homeo(D_S)$. On the other hand, Corollary~\ref{cor:prod2} implies, in particular, that $G=\Gb$.
\end{proof}

Recalling the basic fact that doubly transitive actions are primitive, we also deduce:

\begin{cor}\label{cor:DS:max}
The stabiliser in $\Homeo(D_S)$ of any point of $D_S$ is maximal as a proper subgroup of $\Homeo(D_S)$.\qed
\end{cor}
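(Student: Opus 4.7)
The plan is to reduce the statement to the elementary group-theoretic fact that point stabilisers of a primitive action are maximal subgroups, exactly as the lead-in sentence suggests. Throughout, write $G = \Homeo(D_S)$ and fix $p \in D_S$.

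First I would identify the $G$-orbit of $p$. Since any homeomorphism preserves the Menger--Urysohn order, this orbit is contained in exactly one of the sets $\Ends(D_S)$, $\Reg(D_S)$, or $\Br_n(D_S)$ for the unique $n \in S$ with $p \in \Br_n(D_S)$. By Corollary~\ref{cor:DS:2-trans}, $G$ acts doubly transitively -- in particular transitively -- on each of these three types of sets, so this containing set is precisely the orbit $G\cdot p$. In every case $G \cdot p$ is infinite (for $\Ends(D_S)$ and $\Reg(D_S)$ the set itself is infinite; for $\Br_n(D_S)$ arcwise density of the branch points of each order in $S$ forces infiniteness), so $\Stab_G(p)$ is a proper subgroup of $G$.

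Next I would invoke primitivity. A doubly transitive action on a set of cardinality at least two is primitive: the only $G$-invariant equivalence relations on $G \cdot p$ are the trivial and the indiscrete one. Equivalently, the only blocks containing $p$ are $\{p\}$ and $G\cdot p$. Under the classical Galois-type correspondence between subgroups of $G$ lying between $\Stab_G(p)$ and $G$ on the one hand, and blocks of $G\cdot p$ containing $p$ on the other, this means that the only such intermediate subgroups are $\Stab_G(p)$ and $G$ itself. Hence $\Stab_G(p)$ is maximal among proper subgroups of $G$, which is what is to be proved.

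There is really no obstacle here: the argument is entirely formal once Corollary~\ref{cor:DS:2-trans} is available, since the primitivity/maximality dictionary for transitive group actions is a textbook fact and no further topological input about $D_S$ is required.
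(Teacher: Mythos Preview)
Your proof is correct and follows exactly the route indicated in the paper: Corollary~\ref{cor:DS:2-trans} gives double transitivity on each of $\Ends(D_S)$, $\Reg(D_S)$ and $\Br_n(D_S)$, hence primitivity, hence maximality of point stabilisers. The only extra content you supply is the (easy) verification that each orbit is infinite so that the stabiliser is indeed proper, which the paper leaves implicit.
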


Recalling that the set of branch points is always countable whilst both $\Reg(D_S)$ and $\Ends(D_S)$ are uncountable, Corollary~\ref{cor:DS:2-trans} implies also the following.

\begin{cor}\label {cor:stab:den}
The stabilizer of $x\in D_S$ has countable index in $\Homeo(D_S)$ if and only if $x$ is a branch point.\qed
\end{cor}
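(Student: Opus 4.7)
The plan is to derive this as a direct consequence of the orbit–stabiliser correspondence, combined with the double transitivity already established in Corollary~\ref{cor:DS:2-trans} and the standard cardinality facts about the strata of a dendrite.

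First, I would observe that the index $[\Homeo(D_S):\Stab(x)]$ equals the cardinality of the $\Homeo(D_S)$-orbit of $x$. It thus suffices to determine this orbit. By Corollary~\ref{cor:DS:2-trans} (single transitivity is a fortiori true), if $x\in\Br_n(D_S)$ for some $n\in S$ then its orbit is exactly $\Br_n(D_S)$; if $x\in\Reg(D_S)$ then its orbit is $\Reg(D_S)$; and if $x\in\Ends(D_S)$ then its orbit is $\Ends(D_S)$. (The homeomorphism group preserves the Menger--Urysohn order, so orbits cannot collapse these three classes together.)

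Now I would invoke the cardinality facts recalled in the preliminaries and just before the corollary: the set $\Br(D_S)=\bigsqcup_{n\in S}\Br_n(D_S)$ is countable, hence so is each $\Br_n(D_S)$; whereas $\Reg(D_S)$ and $\Ends(D_S)$ are both uncountable (the uncountability of $\Ends(D_S)$ was observed just after Lemma~\ref{lem:sub:DS}, and $\Reg(D_S)$ contains the uncountable arcwise dense set of non-branch, non-end points of any arc joining two distinct branch points).

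Putting these together, the index is countable precisely in the branch-point case, which is the desired equivalence. No step looks delicate here: the only conceivable subtlety is ensuring that $\Reg(D_S)$ is genuinely uncountable, but this is immediate since any non-trivial arc in $D_S$ has uncountably many points of which only countably many are branch points (and at most two are ends). Hence the proof reduces to a one-line invocation of Corollary~\ref{cor:DS:2-trans} together with these cardinality remarks, which is why the statement is marked \qed in the source.
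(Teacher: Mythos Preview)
Your proof is correct and follows exactly the approach indicated in the paper: the sentence immediately preceding the corollary states that it is deduced from Corollary~\ref{cor:DS:2-trans} together with the fact that $\Br(D_S)$ is countable while $\Reg(D_S)$ and $\Ends(D_S)$ are uncountable. Your argument via the orbit--stabiliser correspondence is precisely this, with a little extra justification for the uncountability of $\Reg(D_S)$.
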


In the case where $S$ is finite, there are only finitely many labelled trees as above for any given number of vertices. Therefore, we deduce the following from Proposition~\ref{prop:extension}.

\begin{cor}\label{cor:DS:oligo}
If $S$ is finite, then the action of $\Homeo(D_S)$ on $D_S$ (viewed simply as a set) is oligomorphic.\qed
\end{cor}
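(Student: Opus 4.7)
The plan is to check directly that for every fixed $n\geq 1$ the group $\Homeo(D_S)$ has only finitely many orbits on $(D_S)^n$, which is the definition of oligomorphy. The whole argument is a finite combinatorial count powered by Proposition~\ref{prop:extension}.

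First I would associate to each tuple $\underline x=(x_1,\dots,x_n)\in (D_S)^n$ the labelled simplicial tree $T_{\underline x}:=\langle F\rangle$, where $F=\{x_1,\dots,x_n\}$, together with the indexing data $\phi\colon i\mapsto x_i\in V(T_{\underline x})$. I claim that the $\Homeo(D_S)$-orbit of $\underline x$ is completely determined by the isomorphism class of $(T_{\underline x},\phi)$ as an indexed labelled tree. On the one hand, any homeomorphism of $D_S$ preserves Menger--Urysohn orders and sends $[F]$ to $[F']$, so it induces an isomorphism of indexed labelled trees $T_{\underline x}\to T_{g\underline x}$. Conversely, from any indexed labelled isomorphism $T_{\underline x}\to T_{\underline x'}$, Proposition~\ref{prop:extension} produces a homeomorphism $h$ of $D_S$ extending it, and by construction the extension restricts to the given map on $F$, hence $h(x_i)=x_i'$ for every~$i$.

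Next I would bound the number of isomorphism classes of such indexed labelled trees. Every vertex of $\langle F\rangle$ which is not in $F$ is, by definition, a branch vertex of $[F]$, hence has at least three complementary components in $D_S$ and is therefore a branch point of $D_S$ with label in $S$; the labels of vertices in $F$ lie in the finite set $\{1,2\}\cup S$. Since $[F]$ is a finite topological tree all of whose degree-one points lie in $F$, its branch vertices number at most $|F|-2$, so $|V(T_{\underline x})|\leq 2n$. With $S$ finite the alphabet $\{1,2\}\cup S$ is finite, the number of unlabelled simplicial trees on at most $2n$ vertices is finite, the number of labelings of such a tree by a finite alphabet is finite, and the number of ways to assign $n$ indices to the vertices is finite. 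The product of these finite quantities bounds the number of orbits on $(D_S)^n$, as required.

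The only point requiring care is verifying that the indexing is honestly tracked through Proposition~\ref{prop:extension}; this is immediate because the extended homeomorphism agrees with the prescribed labelled graph isomorphism on $F$ by the statement of that proposition. Everything else is combinatorial bookkeeping, so no genuine obstacle is expected.
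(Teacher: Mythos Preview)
Your proof is correct and follows essentially the same approach as the paper, which simply observes (in the sentence preceding the corollary) that when $S$ is finite there are only finitely many isomorphism types of labelled trees $\langle F\rangle$ for a given $|F|$, so Proposition~\ref{prop:extension} yields finitely many orbits. Your version is a more explicit write-up of that sketch, including the bookkeeping of the indexing $\phi$ needed to pass from finite subsets to ordered tuples and the bound $|V(\langle F\rangle)|\leq 2n$, but the underlying idea is identical.
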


\noindent
Here \emph{oligomorphic} means that for each $p\in\NN$ the diagonal action of $\Homeo(D_S)$ on $(D_S)^p$ has finitely many orbits~\cite{Cameron_oligo}. When $S$ is infinite, we recover likewise from Proposition~\ref{prop:extension} the weaker fact that $\Homeo(D_S)$ has countably many orbits on $(D_S)^p$, first established in~\cite{Camerlo}.

\medskip
We recall that a topological group is \emph{Roelcke pre-compact} if for every identity neighbourhood $U$ there is a finite set $F$ in $G$ with $G=UFU$. This holds in particular for groups that can be represented as closed oligomorphic permutation groups of countable sets (see e.g.~\cite[\S1.2]{Evans-Tsankov}). Therefore, considering the representation of $\Homeo(D_S)$ into $\Sym(\Br(X))$ as in Proposition~\ref{prop:topol}, we have:

\begin{cor}
If $S$ is finite, then the Polish group $\Homeo(D_S)$ is Roelcke pre-compact.\qed
\end{cor}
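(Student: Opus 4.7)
The plan is to leverage Corollary~\ref{cor:DS:oligo} together with the embedding of Proposition~\ref{prop:topol}, and then to invoke the principle recalled just above the corollary: closed oligomorphic permutation groups of a countable set are Roelcke pre-compact.

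First I would observe that since every $n\in S$ satisfies $n\geq 3$, the set $\Br(D_S)$ is arcwise dense, so $D_S$ admits no free arc. Proposition~\ref{prop:topol} therefore applies and identifies $\Homeo(D_S)$ with a closed subgroup of the Polish group $\Sym(\Br(D_S))$ endowed with pointwise convergence on the countable set $\Br(D_S)$. Next, I would check that the induced action on $\Br(D_S)$ is oligomorphic: Corollary~\ref{cor:DS:oligo} provides finitely many $\Homeo(D_S)$-orbits on $(D_S)^p$ for each $p\in\NN$, and the orbits on the invariant subset $\Br(D_S)^p$ form a sub-collection of these, hence are still finite in number. Applying the Evans--Tsankov reference~\cite[\S1.2]{Evans-Tsankov} to the resulting closed oligomorphic subgroup of $\Sym(\Br(D_S))$ then yields Roelcke pre-compactness of $\Homeo(D_S)$.

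The argument is essentially a bookkeeping exercise at this stage; the only step deserving a word of comment is the passage from oligomorphicity on $D_S^p$ to oligomorphicity on $\Br(D_S)^p$, but this is automatic since the restriction of a finite orbit partition to any invariant subset remains finite. All the genuinely non-trivial combinatorial work has already been absorbed into Proposition~\ref{prop:extension}, so I do not anticipate any real obstacle in carrying out this plan.
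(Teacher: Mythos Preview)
Your proposal is correct and follows essentially the same route as the paper: embed $\Homeo(D_S)$ as a closed subgroup of $\Sym(\Br(D_S))$ via Proposition~\ref{prop:topol}, deduce oligomorphicity on $\Br(D_S)$ from Corollary~\ref{cor:DS:oligo}, and invoke \cite[\S1.2]{Evans-Tsankov}. You have merely made explicit two points the paper leaves implicit (that $D_S$ has no free arc, and that oligomorphicity on $D_S^p$ restricts to $\Br(D_S)^p$), both of which are indeed immediate.
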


By a result of Evans--Tsankov, we can deduce that $\Homeo(D_S)$ has the strong Kazhdan property~(T) as a topological group (which is not the case for all oligomorphic groups, see~\cite[\S6]{Tsankov12}).

\begin{cor}\label{cor:DS:T}
If $S$ is finite, then the Polish group $\Homeo(D_S)$ has the strong Kazhdan property~(T) as a topological group.
\end{cor}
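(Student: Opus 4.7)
The plan is to invoke the Evans--Tsankov theorem of~\cite{Evans-Tsankov} directly. First I will use Proposition~\ref{prop:topol} together with Corollary~\ref{cor:DS:oligo} to realise $\Homeo(D_S)$ as a closed oligomorphic subgroup of $\Sym(\Br(D_S))$. The preceding corollary will then give Roelcke pre-compactness in this embedding, which is exactly the regime covered by the Evans--Tsankov strong property~(T) theorem.

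Next I will verify the additional Evans--Tsankov hypothesis, which (beyond Roelcke pre-compactness) excludes nontrivial sources of almost-invariant vectors in the unitary dual; this typically takes the form of requiring that no open subgroup admit a nontrivial finite-dimensional continuous representation surviving the passage to further open subgroups. I will check this by invoking the simplicity of $\Homeo(D_S)$ (Corollary~\ref{cor:DS:simple}) together with the strong homogeneity encoded in Proposition~\ref{prop:extension}: the open subgroups of $\Homeo(D_S)$ are pointwise stabilisers of finite subsets of $\Br(D_S)$, and the sub-dendrites cut out by such a finite set carry copies of $\Homeo(D_S)$ itself. Simplicity therefore propagates down to stabilisers and rules out nontrivial finite-dimensional continuous quotients.

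The main obstacle will be matching the precise combinatorial hypothesis of Evans--Tsankov to the open subgroup structure of $\Homeo(D_S)$; once oligomorphicity, Roelcke pre-compactness, and the absence of nontrivial finite-dimensional continuous quotients are in place, the conclusion follows immediately from their theorem.
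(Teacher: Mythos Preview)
Your approach is the same as the paper's: realise $\Homeo(D_S)$ as a closed oligomorphic subgroup of $\Sym(\Br(D_S))$ via Proposition~\ref{prop:topol} and Corollary~\ref{cor:DS:oligo}, then apply the Evans--Tsankov theorem, checking the residual hypothesis by appealing to the simplicity of $\Homeo(D_S)$ (Corollary~\ref{cor:DS:simple}).

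However, you overcomplicate the second step. The extra hypothesis in Theorem~1.1 of~\cite{Evans-Tsankov} is simply that the group have no proper open subgroup of finite index. This follows immediately from abstract simplicity: any finite-index subgroup contains a finite-index normal subgroup, which must be the whole group since $\Homeo(D_S)$ is infinite and simple. There is no need to analyse the structure of pointwise stabilisers of finite subsets of $\Br(D_S)$, nor to propagate simplicity to these stabilisers, nor to discuss finite-dimensional representations of open subgroups. Your description of the hypothesis (``no open subgroup admits a nontrivial finite-dimensional continuous representation surviving the passage to further open subgroups'') is not the condition that Evans--Tsankov require, and verifying it along the lines you sketch would be substantially more work than the one-line argument from simplicity that actually suffices.
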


\begin{proof}
In view of Theorem~1.1 in~\cite{Evans-Tsankov} and of the oligomorphic presentation of $\Homeo(D_S)$ into $\Sym(\Br(X))$, it remains only to justify that $\Homeo(D_S)$ has no open subgroup of finite index. This follows from the abstract simplicity of $\Homeo(D_S)$ established in Corollary~\ref{cor:DS:simple}.
\end{proof}

Another application of Proposition~\ref{prop:extension} provides a link with semi-linear orders:

\begin{cor}\label{cor:DS:weak}
Fix $S\se \{3, 4, \ldots, \infty\}$, $n\in S$ and any end $z$ of $D_S$. Then the action of the stabiliser $\Homeo_z(D_S)$ on the subset of pairs
$$\big\{(x,y) : x,y\in\Br_n(D_S), x\neq y \text{ and } x\in [y,z]\big\}$$
is transitive.

In particular, the semi-linear order $(T, \leq)$ induced on $T=\Br_n(D_S)$ as in Example~\ref{ex:sl:dendrite} is weakly two-transitive.
\end{cor}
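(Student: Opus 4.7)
The plan is a direct application of Proposition~\ref{prop:extension} combined with Corollary~\ref{cor:sl:isom}. Given two pairs $(x_1,y_1)$ and $(x_2,y_2)$ from the set in the statement, I form the finite subsets $F_i=\{z,x_i,y_i\}\subseteq D_S$ for $i=1,2$. The assumption $x_i\in[z,y_i]$ implies $[F_i]=[z,y_i]$, an arc in which $z$ and $y_i$ are the two extremities and $x_i$ is an interior (hence regular) point of $[F_i]$. Therefore the labelled tree $\langle F_i\rangle$ is precisely the path whose vertex sequence is $z\text{--}x_i\text{--}y_i$, with labels $1$, $n$, $n$ respectively (the label of $z$ is $1$ since $z$ is an end of $D_S$, and the labels of $x_i,y_i$ are $n$ by the hypothesis $x_i,y_i\in\Br_n(D_S)$).

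The two labelled trees $\langle F_1\rangle$ and $\langle F_2\rangle$ are then visibly isomorphic via the bijection $z\mapsto z$, $x_1\mapsto x_2$, $y_1\mapsto y_2$. Proposition~\ref{prop:extension} extends this isomorphism to a homeomorphism $h$ of $D_S$; since $h(z)=z$ we have $h\in\Homeo_z(D_S)$, and by construction $h$ maps $(x_1,y_1)$ to $(x_2,y_2)$. This proves the first assertion.

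For the weak two-transitivity statement, I would unwind definitions. In the semi-linear order of Example~\ref{ex:sl:dendrite} induced by $z$, one has $x<y$ in $T=\Br_n(D_S)$ exactly when $[z,x]\subsetneq[z,y]$, equivalently $x\in[z,y]$ and $x\neq y$. Thus the set of pairs in the statement coincides with $\{(x,y)\in T^2:x<y\}$. Since $\Br_n(D_S)$ is arcwise dense in $D_S$ (by the defining property of $D_S$ recalled in the preliminaries) and is invariant under $\Homeo_z(D_S)$ (orders of points being topological invariants), Corollary~\ref{cor:sl:isom} identifies $\Homeo_z(D_S)$ with $\Aut(T,\leq)$. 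The transitivity just established therefore translates directly into the fact that $(T,\leq)$ is weakly two-transitive.

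I do not foresee a real obstacle here: the only mildly delicate point is keeping the convention for $\langle F_i\rangle$ straight, specifically noting that $x_i$ must be recorded as a (labelled) vertex of $\langle F_i\rangle$ even though it is a regular point of the arc $[F_i]$, because it belongs to the chosen set $F_i$; once this is observed, the two labelled paths match on the nose and Proposition~\ref{prop:extension} does the rest.
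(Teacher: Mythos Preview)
Your proof is correct and follows essentially the same route as the paper: both arguments observe that $\langle\{z,x,y\}\rangle$ is the labelled path $z\text{--}x\text{--}y$ with labels $1,n,n$ and then invoke Proposition~\ref{prop:extension}. One very minor remark: for the ``In particular'' clause you do not actually need the full strength of Corollary~\ref{cor:sl:isom}; since every element of $\Homeo_z(D_S)$ visibly preserves the order on $T$, the transitivity you established already shows that $\Aut(T,\leq)$ is transitive on $\{(x,y):x<y\}$, without needing to know that the map $\Homeo_z(D_S)\to\Aut(T,\leq)$ is surjective.
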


We recall here that a semi-linear order $(T, \leq)$ is called \emph{weakly two-transitive} if $\Aut(T, \leq)$ acts transitively on the set of pairs $(x,y)$ satisfying $x<y$. 

\begin{proof}[Proof of Corollary~\ref{cor:DS:weak}]
For any such pair $(x,y)$, the labelled tree $\langle \{z,x,y\} \rangle$ has always the same isomorphism type: namely, the vertices $z,x,y$ are aligned and in this order, with labels respectively $1,n,n$. Therefore the statement follows from Proposition~\ref{prop:extension}.
\end{proof}

A group $G$ is said to have \emph{property~(OB)} if every isometric $G$-action on any metric space has bounded orbits. Let us emphasise that in this definition, since no topology on $G$ has been specified, we consider $G$ as a discrete topological group. This property is also called \emph{strong uncountable cofinality}. Amongst equivalent definitions is that every left-invariant metric on $G$ is bounded, see e.g.~\cite[1.2]{Rosendal09}. The above results allow us to leverage a theorem from~\cite{Droste-Truss} on semi-linear orders and deduce:

\begin{thm}\label{thm:OB:stab}
Let $S\se \{3, 4, \ldots, \infty\}$ be a non-empty set and pick $z\in\Ends(D_S)$.

Then the group $\Homeo_z(D_S)$ has property~(OB).
\end{thm}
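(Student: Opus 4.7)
The plan is to recognise $\Homeo_z(D_S)$ as the automorphism group of a suitable semi-linear order and then invoke a theorem of Droste--Truss. Concretely, I would fix some $n \in S$ and set $T = \Br_n(D_S)$. This set is arcwise dense in $D_S$ by the very characterisation of the generalised Wa\.zewski dendrite, and it is invariant under $\Homeo(D_S)$ (in particular under $\Homeo_z(D_S)$) since the Menger--Urysohn order of a point in $D_S$ is a topological invariant. Endowing $T$ with the semi-linear order determined by $z$ as in Example~\ref{ex:sl:dendrite}, Corollary~\ref{cor:sl:isom} provides a canonical group isomorphism
$$\Homeo_z(D_S) \cong \Aut(T, \leq).$$

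The key input is then Corollary~\ref{cor:DS:weak}: the order $(T, \leq)$ is weakly two-transitive. Combined with the basic structural properties of $T$ inherited from $D_S$ (it is countable, the order is dense, and it has no maximum, since every arc in $D_S$ contains branch points of order $n$ and $D_S$ is not reduced to a point), this places $(T, \leq)$ within the scope of the theorem of Droste--Truss~\cite{Droste-Truss}, which ensures that the automorphism group of such a semi-linear order has property~(OB). Since property~(OB) is a purely algebraic invariant, it then transfers through the isomorphism above to $\Homeo_z(D_S)$.

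The main obstacle is simply bookkeeping: matching the precise hypotheses that Droste--Truss impose with the structural properties of $(T, \leq)$ extracted above. Once this verification is made, no further argument is needed, and the theorem follows.
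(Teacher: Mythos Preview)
Your proposal is correct and follows essentially the same approach as the paper's proof: fix $n\in S$, set $T=\Br_n(D_S)$ with the semi-linear order from Example~\ref{ex:sl:dendrite}, use Corollary~\ref{cor:sl:isom} to identify $\Homeo_z(D_S)\cong\Aut(T,\leq)$, invoke Corollary~\ref{cor:DS:weak} for weak two-transitivity, and conclude via the Droste--Truss theorem.
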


\begin{proof}
Choose any $n\in S$ and let $T=\Br_n(D_S)$. We endow $D_S$ and $T$ with the semi-linear order determined by $z$ as in Example~\ref{ex:sl:dendrite}. By Corollary~\ref{cor:DS:weak}, the order $(T, \leq)$ is weakly two-transitive. By Theorem~3.1 in~\cite{Droste-Truss}, it follows that the group $\Aut(T, \leq)$ has property~(OB). Finally, Corollary~\ref{cor:sl:isom}, states that $\Homeo_z(D_S)$ is isomorphic to $\Aut(T, \leq)$.
\end{proof}

\begin{cor}\label{cor:OB}
For any non-empty $S\se \{3, 4, \ldots, \infty\}$, the group $\Homeo(D_S)$ has property~(OB).
\end{cor}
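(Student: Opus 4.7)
The plan is to deduce property~(OB) for the full group $G = \Homeo(D_S)$ from the corresponding property of the end stabiliser $H = \Homeo_z(D_S)$ supplied by Theorem~\ref{thm:OB:stab}, leveraging the strong homogeneity provided by Corollary~\ref{cor:DS:2-trans}.

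The key structural observation is that, by the double transitivity of $G$ on $\Ends(D_S)$, the subgroup $H$ acts transitively on $\Ends(D_S)\setminus\{z\}$. Identifying the coset space $G/H$ with the $G$-set $\Ends(D_S)$ via the transitive action, this translates into the statement that $H\backslash G/H$ consists of exactly two double cosets. Fixing any $g_0\in G$ with $g_0 z\neq z$, one therefore obtains the decomposition
\[
G \;=\; H \,\cup\, H g_0 H.
\]

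With this in hand, property~(OB) for $G$ follows by a standard transfer argument. Given an isometric $G$-action on a metric space $(M,d)$ and a base point $x_0\in M$, Theorem~\ref{thm:OB:stab} provides a finite bound $D=\sup_{h\in H}d(hx_0,x_0)$. For $g\in H$ one has $d(gx_0,x_0)\leq D$, while for $g=h_1 g_0 h_2$ with $h_i\in H$, two applications of the triangle inequality combined with the fact that the $G$-action is isometric yield
\[
d(gx_0,x_0)\;\leq\; d(h_1x_0,x_0)+d(g_0x_0,x_0)+d(h_2x_0,x_0)\;\leq\; 2D+d(g_0x_0,x_0),
\]
so every $G$-orbit is bounded. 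I do not anticipate any real obstacle: the argument hinges entirely on the two-double-coset decomposition, which is immediate from $2$-transitivity on ends, and on the already established property~(OB) for $H$; the only point worth double-checking is the identification $G/H\cong \Ends(D_S)$, which uses only plain transitivity of $G$ on $\Ends(D_S)$, itself a weakening of Corollary~\ref{cor:DS:2-trans}.
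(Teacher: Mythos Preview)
Your proof is correct and follows essentially the same route as the paper: both use Theorem~\ref{thm:OB:stab} for $H=\Homeo_z(D_S)$, invoke double transitivity from Corollary~\ref{cor:DS:2-trans} to obtain the decomposition $G=H\cup Hg_0H$, and deduce property~(OB) for $G$. The only cosmetic difference is that the paper packages the final transfer step into a separate general lemma (Lemma~\ref{lem:OB}), phrased via the characterisation by left-invariant metrics, whereas you carry out the equivalent bounded-orbit computation explicitly.
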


\begin{proof}
Let $z$ be an end of $D_S$. By Theorem~\ref{thm:OB:stab},  $H=\Homeo_z(D_S)$ has property~(OB). By Corollary~\ref{cor:DS:2-trans} the action of $G=\Homeo(D_S)$ on $G/H$ is doubly transitive. Now Lemma~\ref{lem:OB} below completes the proof.
\end{proof}

\begin{lem}\label{lem:OB}
Let $G$ be a group and $H<G$ a subgroup such that the $G$-action on $G/H$ is doubly transitive. If $H$ has property~(OB), then so does $G$.

The same holds more generally if $H$ has the \emph{relative} property~(OB) in $G$, as defined in~\cite{Rosendal14_arx}.
\end{lem}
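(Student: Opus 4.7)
The plan is to reduce a bound on any $G$-orbit to a bound on an $H$-orbit plus one extra step, exploiting the fact that double transitivity forces the decomposition $G = H \cup H g_0 H$ for any fixed $g_0 \notin H$ (the case $H = G$ being trivial). Given an isometric action $G \curvearrowright (X,d)$ and a base point $x_0 \in X$, the hypothesis that $H$ has (relative) property~(OB) yields a constant $R$ with $d(h x_0, x_0) \leq R$ for all $h \in H$; what remains is to bound $d(g x_0, x_0)$ uniformly in $g \in G$.

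First I would verify the decomposition $G = H \cup H g_0 H$. Indeed, given any $g \notin H$, both $(H, g_0 H)$ and $(H, gH)$ are ordered pairs of distinct cosets in $G/H$, so by double transitivity there is $h \in G$ with $hH = H$ and $h g_0 H = gH$. The first equation gives $h \in H$ and the second then gives $g \in H g_0 H$, as required.

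With this in hand, I would fix any $g_0 \in G \setminus H$, set $D = d(g_0 x_0, x_0)$, and for an arbitrary $g \in G \setminus H$ write $g = h_1 g_0 h_2$ with $h_1, h_2 \in H$. The triangle inequality and the $H$-bound then give
\begin{align*}
d(g x_0, x_0) &\leq d(h_1 g_0 h_2 x_0, h_1 g_0 x_0) + d(h_1 g_0 x_0, h_1 x_0) + d(h_1 x_0, x_0)\\
&= d(h_2 x_0, x_0) + d(g_0 x_0, x_0) + d(h_1 x_0, x_0)\\
&\leq 2R + D,
\end{align*}
since $G$ acts by isometries. Hence the orbit $G x_0$ has diameter at most $2(2R + D)$, so $G$ has property~(OB).

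There is no real obstacle; the only point to double-check is that the argument uses only the boundedness of $H$-orbits in isometric $G$-actions, not the boundedness of $H$-orbits in isometric $H$-actions, which is exactly the content of the relative property~(OB) of~\cite{Rosendal14_arx}. Thus the same proof covers the stronger statement in the second sentence of the lemma without modification.
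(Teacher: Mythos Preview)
Your proof is correct and follows essentially the same route as the paper: both hinge on the observation that double transitivity of $G$ on $G/H$ is equivalent to $G = H \cup H g_0 H$ for any $g_0 \notin H$, after which boundedness is immediate. The paper phrases the conclusion via the characterisation of~(OB) through left-invariant metrics on $G$, whereas you work directly with an isometric action on an external metric space; these are interchangeable and your triangle-inequality computation simply spells out what the paper leaves implicit.
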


\begin{proof}
The assumption on $G/H$ is equivalent to the fact that we have $G=H \cup HgH$ for some $g\in G$; hence the statement follows from the characterisation in terms of left-invariant metrics.
\end{proof}

Another application of the isomorphism afforded by Corollary~\ref{cor:sl:isom} between end stabilisers in $\Homeo(D_S)$ and $\Aut(T, \leq)$ will be useful in Section~\ref{sec:non-i}, namely:

\begin{prop}\label{prop:DS:index}
Let $S\se \{3, 4, \ldots, \infty\}$ be non-empty and let $z\in\Ends(D_S)$. Let $M\lhd \Homeo_z(D_S)$ be the normal subgroup of elements fixing pointwise some non-trivial arc containing $z$.

Then $M$ contains all proper normal subgroups of $\Homeo_z(D_S)$.

Furthermore, $\Homeo_z(D_S)$ has no proper subgroup of finite index.
\end{prop}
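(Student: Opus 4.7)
The plan is to transport the problem via Corollary~\ref{cor:sl:isom} to the semi-linear setting. Fix $n \in S$ and set $T = \Br_n(D_S)$, endowed with the semi-linear order induced by $z$ as in Example~\ref{ex:sl:dendrite}; then $\Homeo_z(D_S) \cong \Aut(T, \leq)$. By arcwise density of $\Br_n(D_S)$, the subgroup $M$ corresponds to the ``bounded'' automorphisms, namely those fixing pointwise some cone $\da a$ with $a \in T$; by Corollary~\ref{cor:DS:weak}, $(T, \leq)$ is weakly $2$-transitive.

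For the main assertion, let $N \lhd \Homeo_z(D_S)$ with $N \not\subseteq M$ and pick $g \in N \setminus M$, so that $g$ displaces some point of every cone $\da a$. Given an arbitrary $h \in \Homeo_z(D_S)$, the target is the commutator identity $h = [f, g']$ for some conjugate $g' \in N$ of $g$ and some $f \in \Homeo_z(D_S)$ built from Lemma~\ref{lem:P}, in direct analogy with the proof of Theorem~\ref{thm:simple}. Using weak $2$-transitivity together with $g \notin M$, I would first replace $g$ by a conjugate $g' \in N$ whose orbit $\{(g')^k b\}_{k \in \ZZ}$ of some chosen branch point $b$ forms a chain converging to $z$ as $k \to -\infty$ --- the order-theoretic counterpart of an austro-boreal arc with $z$ as one endpoint. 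Then Lemma~\ref{lem:P} supplies $f$ whose restriction to the fibre above $(g')^k b$ is $(g')^k h (g')^{-k}$ and is trivial on all other fibres, giving $h = [f, g'] \in N$ by construction.

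The main obstacle is producing such a cofinal hyperbolic conjugate $g'$ inside $N$: Theorem~10.5 of~\cite{DM_dendrites} is not directly applicable because $\Homeo_z(D_S)$ is not dendro-minimal on $D_S$ (it fixes $z$). Instead, one exploits $g \notin M$ to find, for every $a \in T$, some $x \leq a$ moved by $g$, and uses weak $2$-transitivity iteratively to assemble a conjugate of $g$ whose orbit is cofinal at $z$. This is the semi-linear counterpart of the classical normal-subgroup analysis of automorphism groups of weakly $2$-transitive semi-linear orders (compare~\cite{Droste-Truss}).

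For the finite-index statement, suppose $K < G := \Homeo_z(D_S)$ has finite index. The intersection $N$ of its finitely many conjugates is a finite-index normal subgroup of $G$. If $N$ were proper, the main assertion would give $N \subseteq M$ and hence $[G : M] < \infty$. I would rule this out by producing $h \in G$ with $h^k \notin M$ for every $k \neq 0$: fix an end $y \neq z$, pick a chain of branch points $\{c_k\}_{k \in \ZZ}$ on the arc $[z, y]$ with $\inf_k c_k = z$ and $\sup_k c_k = y$, and define $h$ by $h c_k = c_{k+1}$, extending to $\Homeo_z(D_S)$ via Proposition~\ref{prop:extension} and the homogeneity of $D_S$. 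For every $k \neq 0$, $h^k$ moves every $c_j$; since any arc $[z, a]$ with $a \neq z$ meets $[z, y]$ in a non-trivial initial subarc containing some $c_j$ (recall $z$ is an end), $h^k$ does not fix $[z, a]$ pointwise. Hence $\langle h \rangle$ embeds in $G/M$ as a copy of $\ZZ$, contradicting $[G : M] < \infty$. Therefore $N = G$ and $K = G$.
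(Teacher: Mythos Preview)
Your finite-index argument is essentially the paper's: both produce an element with an austro-boreal arc terminating at $z$ (the paper via Lemma~\ref{lem:obarc}, you via an explicit shift along a chain on $[z,y]$) and observe that all its non-trivial powers lie outside $M$, so $G/M$ is infinite.

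For the main assertion the paper takes a much shorter route: after the identification $\Homeo_z(D_S)\cong\Aut(T,\leq)$ and the weak $2$-transitivity of $(T,\leq)$, it simply invokes Theorem~1.3 of Droste--Holland--Macpherson~\cite{Droste-Holland-Macpherson} (not~\cite{Droste-Truss}, which concerns property~(OB)) to conclude that every proper normal subgroup of $\Aut(T,\leq)$ lies in $R(T)$, and identifies $R(T)$ with $M$. Your attempt to reprove this via a Tits-style commutator, while in the right spirit, has a genuine gap beyond the one you flag.

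Even granting a conjugate $g'\in N$ with an austro-boreal arc $I$ anchored at $z$, your formula ``$f$ restricts to $(g')^k h (g')^{-k}$ on the fibre over $(g')^k b$'' does not define an element via Lemma~\ref{lem:P}: for that lemma one needs each $(g')^k h (g')^{-k}$ to be a self-homeomorphism of $X_{(g')^k b}(I)$ fixing $(g')^k b$, which amounts to $h$ preserving $X_b(I)$ and fixing $b$. In the proof of Theorem~\ref{thm:simple} this holds precisely because the element being expressed as a commutator is \emph{supported in a single fibre} $X_b(I)$ (it fixes pointwise a component containing $I$). An arbitrary $h\in\Homeo_z(D_S)$ has no reason to preserve $X_b(I)$, so the commutator identity $h=[f,g']$ fails as written. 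To make this strategy work you would first have to show that $N$ contains every element supported in a single fibre and then that such elements generate $\Homeo_z(D_S)$; this is effectively the content of the Droste--Holland--Macpherson argument and is not a one-line consequence of weak $2$-transitivity.
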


\begin{proof}
As before we fix $n\in S$ and consider the countable semi-linear order $(T, \leq)$ for $T=\Br_n(D_S)$ as in Example~\ref{ex:sl:dendrite}, recalling that it is weakly two-transitive by Corollary~\ref{cor:DS:weak}. Moreover, $H=Homeo_z(D_S)$ is canonically isomorphic to $\Aut(T, \leq)$ by Corollary~\ref{cor:sl:isom}. Therefore, we can apply Theorem~1.3 in~\cite{Droste-Holland-Macpherson} which holds for all countable weakly two-transitive semi-linear orders. This result implies that every proper normal subgroup $N\lhd \Aut(T)$ is contained in the normal subgroup $R(T)\lhd \Aut(T)$ of elements that fix pointwise $\{y:y< t\}$ for some $t\in T$. As a subgroup of $\Homeo(D_S)$, this is precisely the group $M$.

For the additional statement, it suffices to prove that the group $H/M$ is infinite since every finite index subgroup contains a normal finite index subgroup. Let $g\in \Homeo(D_S)$ be a homeomorphism admitting an austro-boreal arc ending at $z$; in particular, $g\in H$. Such an element $g$ exists by Lemma~\ref{lem:obarc} below. We claim that $g\notin M$. This claim then also holds for any non-trivial power of $g$ and hence the group generated by the image of $g$ in $H/M$ is indeed infinite.

To prove the claim, it suffices to observe that any arc terminating at $z$ must meet $I$ at more than just $z$ because $z$ is an end; therefore the claim follows since the only $g$-fixed points in $I$ are the two extremities of $I$. 
\end{proof}

\begin{lem}\label{lem:obarc}
Any non-trivial arc of $D_S$ is austro-boreal for some element of $\Homeo(D_S)$. 
\end{lem}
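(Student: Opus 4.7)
Let $I=[x,y]$ be the given non-trivial arc; the idea is to construct $g\in\Homeo(D_S)$ acting as a bi-infinite shift along a cofinal sequence of branch points on $I$, exploiting the self-similarity of $D_S$.

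Fix any $m\in S$. Since $\Br_m(D_S)$ is arcwise dense, pick a sequence $(b_n)_{n\in\ZZ}\se \Br_m(D_S)\cap(x,y)$, strictly monotone along $I$, with $b_n\to x$ as $n\to-\infty$ and $b_n\to y$ as $n\to+\infty$. For each $n$, let $E_n$ be the closure in $D_S$ of the component of $D_S\setminus\{b_n,b_{n+1}\}$ containing the open arc $(b_n,b_{n+1})$; by Lemma~\ref{lem:sub:DS}, $E_n\cong D_S$ and $b_n,b_{n+1}$ are two of its ends. Let $B_n^{(1)},\dots,B_n^{(m-2)}$ be the components of $D_S\setminus\{b_n\}$ not meeting $I$; each $\overline{B_n^{(j)}}\cong D_S$ has $b_n$ as an end. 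Together with the fibres $\pi^{-1}(x),\pi^{-1}(y)$ of the first-point map $\pi\colon D_S\to I$, these pieces cover $D_S$, and any two distinct pieces meet only at a single point of $\{x,y\}\cup\{b_n\}_{n\in\ZZ}$.

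Using Corollary~\ref{cor:DS:2-trans} (transitivity of $\Homeo(D_S)$ on ordered pairs of distinct ends) together with the identifications above, choose homeomorphisms
$$\sigma_n\colon E_n\to E_{n+1}\text{ with }\sigma_n(b_n)=b_{n+1},\ \sigma_n(b_{n+1})=b_{n+2},$$
$$\tau_n^{(j)}\colon \overline{B_n^{(j)}}\to \overline{B_{n+1}^{(j)}}\text{ with }\tau_n^{(j)}(b_n)=b_{n+1}.$$
Define $g\colon D_S\to D_S$ piecewise by $\sigma_n$ on $E_n$, $\tau_n^{(j)}$ on $\overline{B_n^{(j)}}$, the identity on $\pi^{-1}(x)\cup\pi^{-1}(y)$, and $g(x)=x,\ g(y)=y$. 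These prescriptions agree on overlaps, since at each $b_n$ every applicable map sends $b_n$ to $b_{n+1}$.

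Continuity of $g$ away from $x,y$ is immediate from continuity of the $\sigma_n$ and $\tau_n^{(j)}$. For continuity at $x$ (and symmetrically at $y$), I apply Lemma~\ref{lem:null}: the collection $\{E_n\setminus\{b_n,b_{n+1}\}\}_{n\in\ZZ}\cup\{B_n^{(j)}\}_{n,j}$ is a pairwise disjoint family of connected open subsets of $D_S$, hence a null-sequence. As $n\to-\infty$ each such piece shrinks to $x$, and its $g$-image (the piece indexed by $n+1$) does likewise, forcing $g(p)\to x$ as $p\to x$. Being a continuous bijection of the compact space $D_S$, $g$ is a homeomorphism. Finally, for any $p\in I\setminus\{x,y\}$ with $p\in E_n$, the image $g(p)=\sigma_n(p)$ lies in $E_{n+1}$; since $E_n\cap E_{n+1}=\{b_{n+1}\}$ and $g(b_{n+1})=b_{n+2}\neq b_{n+1}$, we have $g(p)\neq p$. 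Hence $g$ fixes $I$ exactly at $\{x,y\}$, and $I$ is austro-boreal for $g$. The main delicate point I expect is the continuity verification at $x$ and $y$; the null-sequence property handles it cleanly and also shows that the particular choices of the $\sigma_n$ and $\tau_n^{(j)}$ are irrelevant.
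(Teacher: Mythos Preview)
Your proof is correct, but it takes a genuinely different route from the paper. The paper's argument is a transport: it invokes Theorem~10.5 of~\cite{DM_dendrites} to obtain \emph{some} austro-boreal arc $[x',y']$ for some $g'\in\Homeo(D_S)$, observes via~\cite[6.2]{Charatonik-Dilks} that the sub-dendrite $D_S(x,y)$ spanned by the given arc is homeomorphic to $D_S(x',y')$, then conjugates $g'$ onto $D_S(x,y)$ and extends by the identity elsewhere. Your argument instead builds the desired homeomorphism directly as a bi-infinite shift along a cofinal sequence of branch points in $(x,y)$, using the self-similarity encoded in Lemma~\ref{lem:sub:DS} and Corollary~\ref{cor:DS:2-trans}. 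The upshot is that you avoid the external existence result altogether, at the cost of a longer but more explicit construction; the paper's version is shorter precisely because that existence result has already been established for general dendro-minimal actions.

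Two small remarks. First, your notation $B_n^{(1)},\dots,B_n^{(m-2)}$ tacitly assumes $m$ is finite; if $m=\infty$ you should let $j$ range over a countable index set, but nothing in the argument changes. Second, your claim that ``continuity of $g$ away from $x,y$ is immediate'' needs the null-sequence property (or Lemma~\ref{lem:patchwork}) at each $b_n$ when $m=\infty$, since infinitely many closed pieces meet there; you do invoke Lemma~\ref{lem:null} at $x$ and $y$, and the same reasoning covers the $b_n$'s, so this is only a matter of emphasis.
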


\begin{proof}
Let $[x,y]$ be a non-trivial arc of $D_S$. By Theorem~10.5 in~\cite{DM_dendrites}, we know that there is an element $g'$ of $\Homeo(D_S)$ admitting some austro-boreal arc $[x', y']$. By~\cite[6.2]{Charatonik-Dilks}, both $D_S(x,y)$ and $D_S(x',y')$ are homeomorphic to $D_S$ and hence to each other (these sub-dendrites were defined in the proof of Proposition~\ref{prop:extension}). We can now define $g\in \Homeo(D_S)$ by transporting the $g'$-action from $D_S(x',y')$ to $D_S(x,y)$ and letting $g$ act trivially outside $D_S(x,y)$.
\end{proof}

\section{Non-isomorphic homeomorphism groups}\label{sec:non-i}
Consider any non-empty set $S\se \{3, 4, \ldots, \infty\}$. The initial goal of this section is to prove that the homeomorphism group of the generalised Wa\.zewski dendrite $D_S$, as an abstract group, determines the set $S$. The proof will show that the space $D_S$ can be recovered from the mutual positions of the stabilisers of branch points as abstract subgroups of $\Homeo_x(D_S)$.

Given a point $x\in D_S$, denote by $\sC_x$ the set of connected components of $D_S\setminus\{x\}$. For each $C\in\sC_x$, the closure $\overline C = C\cup\{x\}$ is homeomorphic to $D_S$ by Lemma~\ref{lem:sub:DS}. The stabiliser $\Homeo_x(D_S)$ has a natural representation to the permutation group $\Sym(\sC_x)$. Moreover, this representation is split surjective and yields a permutational wreath product:

\begin{lem}\label{lem:unscrewingstab}
$$\Homeo_x(D_S) \cong \left(\prod_{C\in\sC_x}\Homeo_x(\overline C)\right)\rtimes\Sym\left(\sC_x\right).$$
\end{lem}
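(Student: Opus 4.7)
My plan is to build the semi-direct decomposition in two parts: first identify the kernel of the natural action on components, then produce a set-theoretic section from $\Sym(\sC_x)$.

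\emph{Step 1: The natural homomorphism and its kernel.}
Any $g\in\Homeo_x(D_S)$ permutes $\sC_x$ since it fixes $x$, giving a homomorphism $\pi\colon \Homeo_x(D_S)\to\Sym(\sC_x)$. The kernel $K$ consists precisely of homeomorphisms preserving each component setwise; restricting to each $\overline{C}$ yields an injection $K\hookrightarrow \prod_{C\in\sC_x}\Homeo_x(\overline{C})$. This is surjective by a direct application of Lemma~\ref{lem:patchwork}: given a family $(g_C)\in\prod_C\Homeo_x(\overline{C})$, the maps $g_C|_C$ extend continuously by the identity on the boundary $\{x\}$ and patch together to a homeomorphism of $D_S$ fixing $x$.

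\emph{Step 2: Constructing a section.}
By Lemma~\ref{lem:sub:DS}, each $\overline{C}$ is homeomorphic to $D_S$, and $x$ is an end point of $\overline{C}$ because $\overline{C}\setminus\{x\}=C$ is connected. Fix an end $o\in\Ends(D_S)$; using Corollary~\ref{cor:DS:2-trans} (transitivity on ends), choose for each $C\in\sC_x$ a homeomorphism $\phi_C\colon D_S\to\overline{C}$ with $\phi_C(o)=x$. For $\sigma\in\Sym(\sC_x)$, define $h_\sigma\colon D_S\to D_S$ componentwise by
$$h_\sigma|_{\overline{C}} \;=\; \phi_{\sigma(C)}\circ\phi_C^{-1}.$$
The definitions are consistent at the common point $x$ (both factors send $x$ to $x$), and a direct computation using $\phi_{\tau(C)}^{-1}\circ\phi_{\tau(C)}=\mathrm{id}$ shows the cocycle relation $h_\sigma\circ h_\tau=h_{\sigma\tau}$.

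\emph{Step 3: Continuity via null-sequences.}
It remains to verify that each $h_\sigma$ is a homeomorphism; bijectivity is immediate since $\sigma$ permutes $\sC_x$ and each $\phi_{\sigma(C)}\circ\phi_C^{-1}$ is a homeomorphism from $\overline{C}$ onto $\overline{\sigma(C)}$. For continuity, mimicking the proof of Lemma~\ref{lem:patchwork}, any convergent sequence $x_n\to y$ either lies eventually inside a single $\overline{C}$ (where $h_\sigma$ is manifestly continuous) or meets infinitely many distinct components $\overline{C_n}$. In the latter case, Lemma~\ref{lem:null} forces $\diam(\overline{C_n})\to 0$, hence $y=x$; applying the same null-sequence reasoning to the permuted family $\{\overline{\sigma(C_n)}\}$ shows $h_\sigma(x_n)\in\overline{\sigma(C_n)}$ also converges to $x=h_\sigma(x)$. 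Consequently $\sigma\mapsto h_\sigma$ is a group-theoretic section of $\pi$, and combined with Step~1 this delivers the claimed permutational wreath product.

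\emph{Main obstacle.}
The substantive point is the mild generalization of Lemma~\ref{lem:patchwork} needed in Step~3, where our local maps send each component to a \emph{different} component rather than to itself. This extension is not deep: the argument rests entirely on the observation that both the source family $\{\overline{C_n}\}$ and the target family $\{\overline{\sigma(C_n)}\}$ are null-sequences (Lemma~\ref{lem:null}) accumulating only at $x$.
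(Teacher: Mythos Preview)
Your proof is correct and follows essentially the same route as the paper's: identify the kernel of the component-permutation map via Lemma~\ref{lem:patchwork}, construct a section by fixing coordinate homeomorphisms $\phi_C$ to a reference copy of $D_S$ taking $x$ to a chosen end, and verify continuity of the resulting $h_\sigma$ via the null-sequence argument of Lemma~\ref{lem:null}. The only cosmetic difference is the direction of your $\phi_C$ versus the paper's $\varphi_C$, and your slightly more explicit treatment of the continuity argument in Step~3.
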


\begin{proof}
The kernel of the representation to $\Sym(\sC_x)$ is all of $\prod_{C\in\sC_x}\Homeo_x(\overline C)$ because of Lemma~\ref{lem:patchwork}. Therefore it suffices to prove that there is a subgroup of $\Homeo_x(D_S)$ mapping isomorphically onto $\Sym(\sC_x)$ under this representation.

To this end, we select an end $z$ of $D_S$ and choose for each $C\in \sC$ a homeomorphism $\varphi_C\colon\overline{C}\to D_S$ such that $\varphi_C(x)=z$. Given a permutation $\sigma\in\Sym(\sC_x)$, we obtain $\widetilde\sigma\in \Homeo_x(D_S)$ by defining $\widetilde\sigma$ to be $\varphi_{\sigma(C)}^{-1}\circ\varphi_C$ on $\overline C$ for each $C\in\sC_x$. The fact that $\widetilde\sigma$ is indeed a homeomorphism even if $\sC$ is infinite follows from the fact that $\sC$ is a null-family, as in the proof of Lemma~\ref{lem:patchwork}. Now $\sigma\mapsto\widetilde\sigma$ is indeed a section of $\Sym(\sC_x)$.
\end{proof}

\begin{cor}\label{cor:unique:index}
If $x\in D_S$ is a point of finite order, then every finite index subgroup of $\Homeo_x(D_S)$ contains the kernel of the representation onto $\Sym(\sC_x)$.
\end{cor}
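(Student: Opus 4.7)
The plan is to reduce to Proposition~\ref{prop:DS:index}, which states that the stabiliser of an end has no proper finite-index subgroup. The key observation is that when $x$ has finite order, the kernel $K$ of $\Homeo_x(D_S) \to \Sym(\sC_x)$ is a \emph{finite} direct product of copies of end-stabilisers of $D_S$, and this property (no proper finite-index subgroup) is preserved under finite direct products and under passing to overgroups.

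More precisely, I would proceed as follows. First, by Lemma~\ref{lem:unscrewingstab}, write
$$K\ =\ \prod_{C\in\sC_x}\Homeo_x(\overline C),$$
where $\sC_x$ is finite by assumption on the order of $x$. For each $C\in\sC_x$, Lemma~\ref{lem:sub:DS} provides a homeomorphism $\overline C\to D_S$, and this homeomorphism may be chosen to send $x$ to a prescribed end $z$ of $D_S$ since every extremity of a sub-dendrite arising as $\overline C$ is a point of $\overline C$ at which a single component meets $x$, hence corresponds to an end under the identification. Thus $\Homeo_x(\overline C)\cong \Homeo_z(D_S)$, and by Proposition~\ref{prop:DS:index} each factor of $K$ has no proper subgroup of finite index.

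Second, I would verify that a finite direct product of groups with no proper finite-index subgroup inherits this property. Given $H\leq G_1\times\cdots\times G_n$ of finite index, the intersection $H\cap G_i$ (viewing $G_i$ as a factor) has finite index in $G_i$, since $G_i/(H\cap G_i)$ injects into $(G_1\times\cdots\times G_n)/H$. By hypothesis $H\cap G_i=G_i$, so $H$ contains each factor and hence the whole product. Applied to $K$, this shows $K$ has no proper finite-index subgroup.

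Finally, let $H\leq \Homeo_x(D_S)$ be of finite index. Then $H\cap K$ has finite index in $K$ by the same embedding argument, and hence $H\cap K=K$, which is exactly the desired containment $K\subseteq H$. The only mildly delicate point, and the place where I would be most careful, is justifying that the homeomorphism $\overline C\to D_S$ in step~one can be chosen to send $x$ to an end, which is immediate from the characterisation of $D_S$ recalled before Lemma~\ref{lem:sub:DS} (since $x$ has a single local branch into $\overline C$, its image is necessarily an end).
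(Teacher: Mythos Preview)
Your proof is correct and follows exactly the paper's approach: the paper's one-line argument is that since $\sC_x$ is finite and each factor $\Homeo_x(\overline C)\cong\Homeo_z(D_S)$ has no proper finite-index subgroup by Proposition~\ref{prop:DS:index}, the kernel (a finite product of such groups) has none either. You have simply unpacked the same reasoning in more detail, including the elementary verification that a finite direct product inherits the no-proper-finite-index-subgroup property.
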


\begin{proof}
Since $\overline C\cong D_S$ and since $\sC_x$ is finite, Proposition~\ref{prop:DS:index} implies that this kernel, namely the product $\prod_{C\in\sC_x}\Homeo_x(\overline C)$, has no finite index (proper) subgroup.
\end{proof}

\begin{thm}\label{thm:strong:isom}
Let $S,S'\se\{3, 4, \ldots, \infty\}$ be two non-empty subsets. Suppose that there is a group isomorphism
$$\Phi\colon \Homeo(D_S) \lra \Homeo(D_{S'}).$$
Then there is a map $\fhi\colon D_S\to D_{S'}$ which is a $\Phi$-equivariant homeomorphism.
\end{thm}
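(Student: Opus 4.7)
The plan is to reconstruct $D_S$ from the abstract group $G := \Homeo(D_S)$ by identifying each point $x$ with its stabiliser $H_x := \Homeo_x(D_S)$, and then using $\Phi$ to transport stabilisers across, defining $\fhi(x)$ as the unique $y\in D_{S'}$ with $\Phi(H_x) = H_y$.

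First I would give a purely group-theoretic characterisation of the family $\{H_x\}_{x\in D_S}$ inside $G$. Two ingredients are at hand: (i) each $H_x$ is a maximal subgroup of $G$ by Corollary~\ref{cor:DS:max}, which will ensure uniqueness of $\fhi(x)$; (ii) the semi-direct product decomposition of Lemma~\ref{lem:unscrewingstab}, combined with Proposition~\ref{prop:DS:index} and Corollary~\ref{cor:unique:index}, provides a rich package of invariants of $H_x$---namely its maximal finite quotient (which is $\Sym(n)$ when $x\in\Br_n(D_S)$ with $n$ finite, and trivial otherwise), the existence of $|\sC_x|$ mutually conjugate commuting direct factors in its canonical normal subgroup, and the isomorphism type of each such factor (an end-stabiliser $\Homeo_z(D_S)$). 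Together these pin $H_x$ down among maximal subgroups of $G$.

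Since every such invariant is preserved by $\Phi$, the image $\Phi(H_x)$ is a maximal subgroup of $G' := \Homeo(D_{S'})$ of the same type, and therefore equals $H_{\fhi(x)}$ for a unique $\fhi(x) \in D_{S'}$. The conjugation identity $gH_xg^{-1} = H_{gx}$ transported by $\Phi$ immediately yields $\fhi(gx) = \Phi(g)\fhi(x)$, so $\fhi: D_S \to D_{S'}$ is a $\Phi$-equivariant bijection; moreover, matching the group-theoretic types shows that $\fhi$ sends $\Br_n(D_S)$ onto $\Br_n(D_{S'})$ (hence $S=S'$), $\Ends(D_S)$ onto $\Ends(D_{S'})$, and $\Reg(D_S)$ onto $\Reg(D_{S'})$. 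To upgrade $\fhi$ to a homeomorphism, fix $n\in S$ and $z\in\Ends(D_S)$, put $z' := \fhi(z)$, and equip $T := \Br_n(D_S)$ and $T' := \Br_n(D_{S'})$ with the semi-linear orders induced by $z$ and $z'$ as in Example~\ref{ex:sl:dendrite}. By Corollary~\ref{cor:sl:isom}, $\Homeo_z(D_S) \cong \Aut(T,\leq)$ and similarly on the primed side; $\Phi$-equivariance of $\fhi|_T$ then forces it to be an isomorphism of semi-linear orders, which via Proposition~\ref{prop:sl:reconstruct} extends canonically to a homeomorphism $\cha T \cong \cha{T'}$, i.e., $D_S \cong D_{S'}$, agreeing with $\fhi$ by naturality.

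The main obstacle is the first step: producing a group-theoretic characterisation of $H_x$ sharp enough to recover $x$ from $\Phi(H_x)$. The most delicate case is distinguishing end stabilisers from stabilisers of infinite-order branch points, since both have no proper finite-index subgroup; here one must exploit Lemma~\ref{lem:unscrewingstab} by observing that an infinite-order branch-point stabiliser contains infinitely many mutually conjugate commuting direct factors, whereas an end stabiliser $\Homeo_z(D_S)$ admits no such decomposition (the complement $D_S\setminus\{z\}$ being connected).
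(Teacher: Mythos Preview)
Your outline has a genuine gap at the very first step. You describe a ``package of invariants'' (maximal finite quotient, commuting direct factors, etc.) satisfied by each point stabiliser $H_x$, and then assert that because $\Phi$ preserves these invariants, $\Phi(H_x)$ must equal $H_y$ for some $y\in D_{S'}$. But these invariants are only \emph{necessary} conditions for being a point stabiliser; you never show they are \emph{sufficient}. Nothing you have written rules out that $\Phi(H_x)$ is some exotic maximal subgroup of $\Homeo(D_{S'})$ with the same finite quotient and factor structure but not fixing any point of $D_{S'}$. The paper's proof handles this by a completely different mechanism: it applies the fixed-point theorem for wreath products acting on dendrites (Proposition~\ref{prop:perm}) to the decomposition of Lemma~\ref{lem:unscrewingstab}, which forces $\Phi(H_x)$ to have a fixed point in $D_{S'}$; maximality then gives $\Phi(H_x)=H_y$. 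This argument requires $|\sC_x|\geq 3$, which is exactly why the paper defines $\fhi$ only on $\Br(D_S)$ and then extends by continuity, rather than on all of $D_S$ at once.

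This gap propagates into your continuity argument. You write $z':=\fhi(z)$ for an end $z$, but $\fhi(z)$ is not defined until you know $\Phi(H_z)$ is a point stabiliser, which is precisely the unestablished step (and Proposition~\ref{prop:perm} does not apply when $|\sC_z|=1$). Even granting that, your claim that ``$\Phi$-equivariance of $\fhi|_T$ forces it to be an isomorphism of semi-linear orders'' needs justification: an equivariant bijection between two sets carrying actions of isomorphic groups need not respect additional structure unless that structure is itself encoded in the orbit decomposition of pairs, which you would have to verify. The paper instead characterises the betweenness relation $x\in[y,z]$ on $\Br(D_S)$ directly in terms of stabiliser inclusions (the existence of $w$ with $H_w\supseteq H_x\cap H_y\cap H_z$), and then uses Lemma~\ref{lem:unif} to deduce uniform continuity of $\fhi|_{\Br(D_S)}$, which extends to a homeomorphism by density.
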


In particular, Theorem~\ref{thm:strong:isom} shows that $S$ is determined by the group $\Homeo(D_S)$.

\begin{cor}\label{cor:non-iso}
The groups $\Homeo(D_S)$ and $\Homeo(D_{S'})$ are isomorphic if and only if $S=S'$.\qed
\end{cor}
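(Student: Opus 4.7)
The plan is to derive Corollary~\ref{cor:non-iso} directly from Theorem~\ref{thm:strong:isom}, which has just been stated.

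First, the \emph{if} direction is trivial: if $S=S'$ then $D_S$ and $D_{S'}$ are homeomorphic (they denote the same dendrite up to homeomorphism) and hence $\Homeo(D_S)$ and $\Homeo(D_{S'})$ are isomorphic as groups.

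For the \emph{only if} direction, suppose we are given a group isomorphism $\Phi\colon \Homeo(D_S)\to \Homeo(D_{S'})$. By Theorem~\ref{thm:strong:isom} there is a $\Phi$-equivariant homeomorphism $\fhi\colon D_S\to D_{S'}$. In particular $\fhi$ is, forgetting equivariance, a plain homeomorphism of topological spaces. Since the Menger--Urysohn order of a point is the cardinality of the set of connected components of its complement, it is a purely topological invariant preserved by any homeomorphism. Hence for every $n\in\{3,4,\ldots,\infty\}$, the map $\fhi$ restricts to a bijection $\Br_n(D_S)\to \Br_n(D_{S'})$.

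By the very definition of the generalised Wa\.zewski dendrite, $n\in S$ if and only if $\Br_n(D_S)\neq \varnothing$, and similarly for $S'$. Combining with the previous paragraph we obtain $S=S'$, which completes the proof. There is no obstacle here at all: once Theorem~\ref{thm:strong:isom} has produced the equivariant homeomorphism $\fhi$, the corollary reduces to the elementary observation that orders of branch points are topological invariants, and the defining characterisation of $D_S$ in terms of which orders are realised by its branch points.
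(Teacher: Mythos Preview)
Your proof is correct and matches the paper's approach: the corollary is stated with a \qed{} because it follows immediately from Theorem~\ref{thm:strong:isom}, exactly as you argue (indeed, the paper even notes within the proof of Theorem~\ref{thm:strong:isom} that ``we have incidentally already $S=S'$''). Your observation that Menger--Urysohn order is a topological invariant and that $n\in S$ iff $\Br_n(D_S)\neq\varnothing$ is precisely the elementary reasoning that makes the corollary immediate.
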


Another consequence of Theorem~\ref{thm:strong:isom} is the following.

\begin{cor}\label{cor:outer}
The group $\Homeo(D_S)$ has no outer automorphisms.
\end{cor}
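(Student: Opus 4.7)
The plan is to reduce the statement immediately to Theorem~\ref{thm:strong:isom} applied in the case $S = S'$. Let $\Phi\colon \Homeo(D_S) \to \Homeo(D_S)$ be an arbitrary (abstract) group automorphism. Theorem~\ref{thm:strong:isom} then produces a $\Phi$-equivariant homeomorphism $\fhi\colon D_S \to D_S$, that is, an element $\fhi \in \Homeo(D_S)$ satisfying
$$\fhi(g \cdot x) \;=\; \Phi(g) \cdot \fhi(x) \qquad \text{for all } g \in \Homeo(D_S),\ x \in D_S.$$

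Rewriting this identity as an identity of self-homeomorphisms of $D_S$, it becomes $\fhi \circ g = \Phi(g) \circ \fhi$, or equivalently $\Phi(g) = \fhi \, g \, \fhi^{-1}$ for every $g \in \Homeo(D_S)$. Hence $\Phi$ is conjugation by $\fhi$, which is an inner automorphism since $\fhi \in \Homeo(D_S)$.

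I do not anticipate any real obstacle here: the corollary is a direct consequence of the statement of Theorem~\ref{thm:strong:isom}, since the $\Phi$-equivariance of $\fhi$ is exactly what is needed to realise $\Phi$ as conjugation by an element of $\Homeo(D_S)$. The only thing to check is that ``$\Phi$-equivariant homeomorphism $D_S \to D_S$'' is synonymous with ``element of $\Homeo(D_S)$ whose conjugation action is $\Phi$'', which is immediate from the definitions.
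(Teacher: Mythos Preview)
Your proof is correct and essentially identical to the paper's: apply Theorem~\ref{thm:strong:isom} with $S=S'$ to an arbitrary automorphism $\Phi$, then read off from $\Phi$-equivariance of $\fhi$ that $\Phi(g)=\fhi g\fhi^{-1}$.
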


\begin{proof}[Proof of Corollary~\ref{cor:outer}]
Let $\Phi$ be an automorphism of the group $\Homeo(D_S)$. Then Theorem~\ref{thm:strong:isom} provides $\fhi\in \Homeo(D_S)$ which is $\Phi$-equivariant. This means by definition that for all $x\in D_S$ and all $g\in \Homeo(D_S)$ we have $\fhi(g x) = \Phi(g) \fhi(x)$. In other words, $\Phi(g) = \fhi g \fhi\inv$; this shows that $\Phi$ is inner.
\end{proof}

\begin{proof}[Proof of Theorem~\ref{thm:strong:isom}]
Throughout the proof, we consider $\Homeo(D_S)$ as acting on $D_S$ but also on $D_{S'}$ via $\Phi$; we only mention $\Phi$ explicitly when a confusion could occur. We begin with the following claim:

For every branch point $x\in D_S$, the stabiliser $\Homeo_x(D_S)$ fixes a branch point in $D_{S'}$; moreover this point is unique.

Indeed, consider the decomposition of $\Homeo_x(D_S)$ given by Lemma~\ref{lem:unscrewingstab} and let $n\in S$ be the order of $x$. Since $\overline C\cong D_S$, Proposition~\ref{prop:DS:index} shows that we can apply Proposition~\ref{prop:perm} and deduce that $\Homeo_x(D_S)$ fixes a point $y$ in $D_{S'}$. We recall that all point stabilisers are maximal by Corollary~\ref{cor:DS:max}. Therefore, since $\Phi$ is an isomorphism, we deduce that the image of $\Homeo_x(D_S)$ is exactly $\Homeo_y(D_{S'})$. Now $y$ must be a branch point in view of Corollary~\ref{cor:stab:den}. The uniqueness follows from the maximality of stabilisers, since otherwise $D_S$ would have two branch points $y, y'$ with the same stabilisers; this is readily seen to contradict the double transitivity of Corollary~\ref{cor:DS:2-trans}, using that $\Br_m(y)$ is arcwise dense, where $m$ is the order of $y$ in $D_{S'}$.

Next, we claim that the order of $y$ is also $n$. It suffices to prove that $n$ is an invariant of the abstract group $H=\Homeo_x(D_S)$ amongst stabilisers of branch points. Indeed, the case $n=\infty$ is characterised as the only case when $H$ has no index two subgroup thanks to Lemma~\ref{lem:no:ind2}. When $n\geq 3$ is finite, it can be recovered from the index of the minimal finite index subgroup given by Corollary~\ref{cor:unique:index}.

At this point, we can already define a natural map $\fhi\colon \Br(D_S) \to  \Br(D_{S'})$ that preserves the Menger--Urysohn order. This map is $\Phi$-equivariant by construction and bijective because it is natural in $\Phi$. We note that we have incidentally already $S=S'$. Since $\Br(D_S)$ is dense in $D_S$, it suffices now to show that $\fhi$ is uniformly continuous with respect to the uniform structure on $\Br(D_S)$ induced by $D_S$. This then implies that $\fhi$ has a continuous extension to $D_S$. Such a continuous extension is automatically an equivariant homeomorphism.

By Lemma~\ref{lem:unif}, this uniform structure is generated by the entourages $U_F$ (restricted to $\Br(D_S)$), where $F$ ranges over the finite subsets of $\Br(D_S)$. Therefore, it suffices to prove that the map $\fhi$ preserves the ternary relation on $\Br(D_S)$ given by $x\in [y,z]$ for $x,y,z\in \Br(D_S)$.

We first show that $\fhi$ preserves the following weaker ternary relation: $\{x,y,z\}$ are contained in a common arc. It suffices to express this relation purely in terms the stabilizers of branch points. This can be done because $\{x,y,z\}$ are \emph{not} contained in a common arc if and only if there is $w \neq x,y,z$ such that
$$ \Homeo_w(D_S) \supseteq \Homeo_x(D_S) \cap \Homeo_y(D_S)\cap \Homeo_z(D_S).$$
The ``only if'' direction holds by considering the centre $w$ of a tripod $(x,y,z$). The ``if'' direction follows readily from the transitivity properties of Proposition~\ref{prop:extension}.

It only remains to express the relation $x\in [y,z]$ in terms of the above weaker ternary relation on $\Br(D_S)$. This is done as follows: $x\in [y,z]$ if and only if for all $w\in \Br(D_S)$, either $\{w,x,y\}$ or $\{w,x,z\}$ are contained in a common arc. The ``only if'' direction holds by definition. For the ``if'' direction, suppose that $x\notin [y,z]$. Then the arc $I$ connecting $x$ to its first-point projection to $[y,z]$ is not reduced to a point. Therefore, we can choose $w\in \Br(D_S)$ which does not lie in any arc containing $I$, and thus neither $\{w,x,y\}$ nor $\{w,x,z\}$ are contained in a common arc.
\end{proof}

Theorem~\ref{thm:strong:isom} can also be deduced from Rubin's much more general (and correspondingly more difficult) Theorem~0.2 in~\cite{Rubin89}. Indeed:

\begin{prop}\label{prop:*}
For any non-empty set $S\se\{3, 4, \ldots, \infty\}$, the pair $(D_S,\Homeo(D_S))$ satisfies Rubin's condition~(\textasteriskcentered).
\end{prop}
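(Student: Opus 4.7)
Rubin's condition~(\textasteriskcentered) is a local richness requirement on the pair $(D_S,\Homeo(D_S))$: $D_S$ must have no isolated points, and for every $x\in D_S$ and every open $U\ni x$ there must be sufficiently many non-trivial elements of $\Homeo(D_S)$ whose support $\supp(g)=\overline{\{y:gy\neq y\}}$ lies in $U$. The plan is to verify this by combining three tools already established in the paper: Lemma~\ref{lem:sub:DS}, identifying the closure of any non-empty connected open subset of $D_S$ with $D_S$ itself; Lemma~\ref{lem:patchwork}, extending a local homeomorphism by the identity provided it fixes the boundary; and the homogeneity of $D_S$ encoded in Proposition~\ref{prop:extension}.

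First I would observe that $D_S$ has no isolated points, since every arc emanating from any point accumulates branch points arbitrarily close to that point, because $\Br_n(D_S)$ is arc-wise dense for each $n\in S$. Second, given $x\in D_S$ and an open neighborhood $U$ of $x$, I would invoke Lemma~\ref{lem:cut:small} together with Remark~\ref{rem:F:dense} to produce a finite set $F\subseteq \Br(D_S)\setminus\{x\}$ such that the components of $D_S\setminus F$ have diameter less than any prescribed $\epsilon>0$ with respect to a chosen compatible metric. For $\epsilon$ small enough, the component $V$ of $D_S\setminus F$ containing $x$ satisfies $\overline V\subseteq U$. Since $V$ is open and connected, Lemma~\ref{lem:sub:DS} gives $\overline V\cong D_S$ under a homeomorphism sending the finite boundary $\partial V\subseteq F$ to a finite set of branch or end points of the copy.

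Third, inside $\overline V\cong D_S$, I would apply Proposition~\ref{prop:extension} to produce non-trivial self-homeomorphisms of $\overline V$ that fix $\partial V$ pointwise and have prescribed behaviour near $x$: for any two branch points $q,q'\in V$ of the same order, the labelled-tree isomorphism $\langle\partial V\cup\{q\}\rangle\to\langle\partial V\cup\{q'\}\rangle$ fixing $\partial V$ extends to such a homeomorphism. By Lemma~\ref{lem:patchwork} applied to the single set $V$, extension by the identity on $D_S\setminus V$ yields a non-trivial $g\in\Homeo(D_S)$ with $\supp(g)\subseteq\overline V\subseteq U$. For any strengthening of~(\textasteriskcentered) possibly demanded in~\cite{Rubin89}---for instance two elements with disjoint supports, a non-trivial commutator, or an element with $gx=x$---I would iterate the construction by choosing disjoint sub-components $V_1,V_2\subseteq V$ via a larger cutting set $F'$ and applying the above separately inside each.

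The main obstacle is not geometric but bookkeeping: matching the precise quantifier structure of~(\textasteriskcentered) from~\cite{Rubin89} and checking the conclusion at each type of point (ends, regular points, and branch points of every order in $S$). The homogeneity afforded by Proposition~\ref{prop:extension} reduces every such case to a construction inside a single copy $\overline V\cong D_S$, so no idea beyond Lemmas~\ref{lem:sub:DS}, \ref{lem:patchwork} and Proposition~\ref{prop:extension} is needed.
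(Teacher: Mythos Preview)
Your toolbox is right, but you have not engaged with the actual condition~(\textasteriskcentered) that the paper spells out immediately after the statement. It is \emph{not} ``no isolated points plus enough locally supported elements''. It has two clauses: \emph{regional disrigidity} (for every non-empty open $U$ there is a non-trivial $g$ with $g|_{X\setminus U}=\mathrm{id}$) and \emph{flexibility} (every non-empty open $U$ contains a non-empty open $U_1$ such that whenever $V,W\se U_1$ are open and some $g\in G$ satisfies $gV\cap W\neq\varnothing$, one can find $g'$ with $g'V\cap W\neq\varnothing$ and $g'|_{X\setminus U}=\mathrm{id}$). Your sketch handles disrigidity cleanly, but flexibility is the substantive clause and you wave it off as ``bookkeeping''; it does require an argument, namely that the group of homeomorphisms of $\overline{U_1}$ fixing $\partial U_1$ acts transitively on a dense subset, so that any global move sending a point of $V$ into $W$ can be reproduced locally. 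Your Proposition~\ref{prop:extension} does give this, but you should say so rather than hedging about ``any strengthening possibly demanded''.

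On the geometric side, your route via Lemma~\ref{lem:cut:small} produces a component $V$ with a \emph{finite} boundary $\partial V\se F$, and you then need labelled-tree isomorphisms fixing all of $\partial V$. This works, but is heavier than necessary. The paper instead picks an end $z_0\in U$ (ends are dense in $D_S$) and takes $U_1$ to be a neighbourhood of $z_0$ whose boundary is a \emph{single} point $z$; then $\overline{U_1}\cong D_S$ with $z$ an end, and everything reduces to the action of $\Homeo_z(D_S)$, where transitivity on $\Br_n(D_S)$ is immediate from Proposition~\ref{prop:extension} (or Corollary~\ref{cor:DS:weak}). Both approaches succeed; the paper's is simpler because a one-point boundary avoids the combinatorics of matching labelled trees relative to several fixed vertices.
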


Recall here that a pair $(X,G)$, where  $X$ is a Hausdorff topological space and $G$ is a subgroup of $\Homeo(X)$, satisfies Rubin's condition~(\textasteriskcentered) from~\cite{Rubin89} if the following hold:

\begin{enumerate}
\item $(X,G)$ is \emph{regionally disrigid}: for any non-empty open subset $U\se X$, there is a non-trivial $g\in G$ such that $g$ is the identity on $X\setminus U$.
\item Any non-empty open subset $U\se X$ contains a non-empty open subset $U_1\se U$ which is \emph{flexible} with respect to $G$, i.e. for any open subsets $V,W\se U_1$ with $gV\cap W\neq\emptyset$ for some $g\in G$, there is $g'\in G$ such that  $g' V\cap W\neq\emptyset$ and $g'$ is the identity on $X\setminus U$.
\end{enumerate}

\begin{proof}
We claim first that every non-empty open subset $U\se D_S$ contains an open subset $U_1$ homeomorphic to $D_S\setminus \{z\}$, where $z\in\Ends(D_S)$. Indeed, by density of $\Ends(D_S)$, we can choose an end $z_0\in U$. Since $z_0$ is an end, it admits a neighbourhood $U_1\se U$ whose topological boundary in $D_S$ is reduced to a single point $z$ (compare e.g.~\cite[9.3]{Nadler}). Since the closure of $U_1$ is homeomorphic to $D_S$ by Lemma~\ref{lem:sub:DS}, the claim follows.

It suffices to prove that the disrigidity and flexibility conditions hold for such a set $U_1$. In both cases, we can work directly with the action of the stabiliser $\Homeo_z(D_S)$ on $D_S$, then transport the resulting homeomorphisms to $U_1$ and extend them by the identity on $D_S\setminus U_1$. In that setting, both conditions are immediate consequences of the fact that $\Homeo_z(D_S)$ acts transitively on $\Br_n(D_S)$ (Proposition~\ref{prop:extension} or Corollary~\ref{cor:DS:weak}) and that $\Br_n(D_S)$ is dense in $D_S$.
\end{proof}

We expect that general dendro-minimal dendrites cannot be reconstructed from their homeomorphism groups. However, besides the case of the generalised Wa\.zewski dendrites, this reconstruction is also possible as soon as there is a free arc:

\begin{thm}\label{thm:rec:free}
Let $X$ be a dendro-minimal dendrite with a free arc. If $G=\Homeo(X)$ is isomorphic, as a group, to $G'=\Homeo(X')$ for any dendro-minimal dendrite $X'$, then $X'\cong X$.

Moreover, the isomorphism of groups is induced by such a homeomorphism; therefore, $G$ has no outer automorphisms.
\end{thm}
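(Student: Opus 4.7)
The plan is to invoke Rubin's Reconstruction Theorem~0.2 from~\cite{Rubin89}, in the same spirit as Proposition~\ref{prop:*}. First I would verify that $(X,\Homeo(X))$ satisfies Rubin's condition~(\textasteriskcentered). Since $X$ is dendro-minimal with a free arc, free arcs are dense in $X$ by Proposition~\ref{prop:free:dense}; hence every non-empty open $U\subseteq X$ contains a non-trivial open sub-arc of some maximal free arc, and any non-trivial homeomorphism of that sub-arc compactly supported inside it extends by the identity to an element of $\Homeo(X)$ via Lemma~\ref{lem:patchwork}. This simultaneously settles the regional disrigidity and flexibility conditions, in direct parallel with the proof of Proposition~\ref{prop:*}.

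Next I would establish that $X'$ must also admit a free arc, so that the same verification yields~(\textasteriskcentered) for $(X',\Homeo(X'))$. Consider the canonical normal subgroup $K=\Fix(\Ends(X))\lhd G$, which is non-trivial and, by Corollary~\ref{cor:fix:stab}, isomorphic to the direct product $\prod_{I\in\Free(X)}\Homeo_+(I)$ of copies of $\Homeo_+([0,1])$. Under $\Phi$ it transports to a non-trivial normal subgroup $\Phi(K)\lhd G'$. If $X'$ had no free arc, then $\Ends(X')$ would be dense in $X'$, and Theorem~\ref{thm:i:simple} applied in $G'$ would leave only two possibilities: either $\Phi(K)$ fixes $\Ends(X')$ pointwise, and is therefore trivial by density, contradicting $K\neq\{1\}$; or $\Phi(K)$ contains $\Gb'$, which is simple by Corollary~\ref{cor:simple}. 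In the second case, a Goursat-style argument embeds $\Gb'$ into a single factor $\Homeo_+([0,1])$, which is torsion-free, and the contradiction comes by exhibiting a finite-order element inside $\Gb'$---the natural candidate being an involution swapping two homeomorphic components at a suitable branch point of $X'$, assembled via Lemma~\ref{lem:patchwork}.

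Once $X'$ is known to admit a free arc, the first step applied to $X'$ yields~(\textasteriskcentered) for $(X',\Homeo(X'))$ as well, and Rubin's Theorem~0.2 produces a homeomorphism $\varphi\colon X\to X'$ satisfying $\Phi(g)=\varphi\circ g\circ\varphi\inv$ for every $g\in G$. This simultaneously yields $X\cong X'$ and, upon specialising to $X'=X$, the absence of outer automorphisms, exactly in the manner of Corollary~\ref{cor:outer}.

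The hard step is the very last part of the second paragraph: producing a torsion element in $\Gb'$ for a general dendro-minimal dendrite $X'$ without free arc. While the existence of two homeomorphic components at some branch point is intuitively consistent with the homogeneity forced by dendro-minimality, extracting such a pair appears to require a genuine refinement of the disjunction arguments used in Section~2, going beyond what is provided by Corollary~\ref{cor:dis3}.
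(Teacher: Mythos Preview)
Your overall architecture matches the paper's: verify Rubin's condition~(\textasteriskcentered) for $(X,\Homeo(X))$ via the density of free arcs, reduce to showing $X'$ also has a free arc, then apply Rubin. The verification of~(\textasteriskcentered) and the endgame are essentially identical to the paper.

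The gap is exactly where you locate it, and it is real. Your dichotomy via Theorem~\ref{thm:i:simple} leaves open the case $\Phi(K)\supseteq \Gb'$, and the proposed torsion argument is not completed. Even granting the Goursat-style embedding of the simple group $\Gb'$ into a single factor $\Homeo_+([0,1])$ (which is fine: for a simple subgroup of a product, at least one coordinate projection is injective), you still need a finite-order element in $\Gb'$. For an arbitrary dendro-minimal dendrite $X'$ without free arc there is no result in the paper guaranteeing that two components at some branch point are homeomorphic via a map fixing that point, and nothing like Corollary~\ref{cor:dis3} provides this. So the proof, as it stands, is incomplete.

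The paper closes this gap by a completely different, and much shorter, mechanism that bypasses the problematic case altogether. Instead of invoking the normal-subgroup dichotomy, it shows directly that $\Phi(K)$ \emph{fixes a point} in $X'$. The key external input is that every abstract group action of $\Homeo_+([0,1])$ on a compact metrisable space has a fixed point: this combines the automatic continuity of Rosendal--Solecki with Pestov's extreme amenability (Corollary~7 in~\cite{Rosendal-Solecki}). Thus each factor $\Homeo_+(I)$ of $K$ fixes some point of $X'$, and then Lemma~\ref{lem:fixprodgen} gives a common fixed point for the whole product. Now Lemma~\ref{lem:dm} (rather than Theorem~\ref{thm:i:simple}) forces $\Phi(K)$ to fix $\Ends(X')$ pointwise, and density of ends in the free-arc-free case yields the contradiction $K=\{1\}$. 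In other words, the case you cannot handle simply does not occur, and the reason is an analytic fact about $\Homeo_+([0,1])$ rather than any structural fact about $X'$.
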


Notice that Theorem~\ref{thm:rec:free} does not, at first sight, fit into Rubin's setting since $X$ and $X'$ are not assumed to belong both to the same class (of dendro-minimal dendrites with free arcs). But in fact, the first step of the proof is to establish that the group isomorphism forces $X'$ to admit a free arc; from then on, one can invoke Rubin's theorem.

\begin{proof}[Proof of Theorem~\ref{thm:rec:free}]
Let $I$ be a maximal free arc of $X$. Combining the automatic continuity of~\cite{Rosendal-Solecki} with the extreme amenability of~\cite{Pestov98}, it follows that any group action of $\Homeo_+(I)$ on any compact metrisable space has a fixed point, see Corollary~7 in~\cite{Rosendal-Solecki}. In particular, it follows that $\Homeo_+(I)$ fixes a point in $X'$.

Next, we recall from Proposition~\ref{prop:fix:stab} that $G$ contains the product $\prod_{I\in \Free(X)} \Homeo_+(I)$ as a normal subgroup. Lemma~\ref{lem:fixprodgen} implies that this product group fixes a point in $X'$. Since it is a normal subgroup and since $X'$ is dendro-minimal, Lemma~\ref{lem:dm} implies that this product fixes $\Ends(X')$ pointwise. Recalling that the set of ends is dense unless $X'$ admits a free arc, we conclude that $X'$ does indeed admit a free arc.

As mentioned above, this is a point where Rubin's theorem can be applied. Indeed, the condition~(\textasteriskcentered) can be readily verified after observing that any non-empty open set contains some free arc, which follows from Proposition~\ref{prop:free:dense}.
\end{proof}

\bibliographystyle{halpha}
\bibliography{dendrite}
\end{document}